\theoremstyle{plain} 
\newtheorem{theorem}{Theorem}[section]
\newtheorem{proposition}[theorem]{Proposition}
\newtheorem{lemma}[theorem]{Lemma}
\newtheorem{corollary}[theorem]{Corollary}
\theoremstyle{definition}
\newtheorem{definition}[theorem]{Definition}
\theoremstyle{remark}
\newtheorem{remark}[theorem]{Remark}
\numberwithin{equation}{section}
\newcommand{\dom}[1]{\hat{#1}}
\newcommand{\norm}[1]{\lVert #1 \rVert}
\newcommand{\onorm}[1]{\Vert #1 \Vert}
\newcommand{\pnorm}[1]{| #1 |}
\newcommand{\bnd}[1]{\partial #1}
\newcommand{\bund}[1]{\mathbb{#1}}
\newcommand{\cat}[1]{\mathcal{#1}}
\newcommand{\cbar}[1]{\dom{c}\left(#1\right)}
\newcommand{\field}[1]{\mathbb{#1}}
\newcommand{\abs}[1]{\lvert #1 \rvert}
\newcommand{\R}{\field{R}}
\newcommand{\C}{\field{C}}
\newcommand{\sphe}{\mathbb{S}}
\newcommand{\Sc}{\mathrm{Sc}}
\newcommand{\Ric}{\mathrm{Ric}}
\newcommand{\inner}[1]{\langle #1 \rangle}
\newcommand{\binner}[1]{\big< #1 \big>}
\NewDocumentCommand{\cone}{o m}{C\IfValueT{#1}{_{#1}}#2}
\NewDocumentCommand{\curvtens}{o}{R\IfValueT{#1}{^{#1}}}
\NewDocumentCommand{\curvoper}{o}{\mathcal{R}\IfValueT{#1}{^{#1}}}
\NewDocumentCommand{\conn}{o}{\nabla\IfValueT{#1}{^{#1}}}
\NewDocumentCommand{\dvol}{o}{\,dvol\IfValueT{#1}{_{#1}}}
\newcommand{\Cl}{C\ell}
\newcommand{\hdim}{h} 
\newcommand{\ldim}{l} 
\newcommand{\tdim}{n} 
\DeclareMathOperator{\trac}{tr} 
\DeclareMathOperator{\dist}{dist}
\DeclareMathOperator{\operdom}{dom}
\DeclareMathOperator{\dive}{div}
\begin{document}
\title[scalar-mean comparison for manifolds with conical strati]{A scalar-mean curvature comparison theorem for manifolds with iterated conical singularities}

\author{Milan Jovanovic}
\address[Milan Jovanovic]{Texas A\&M University}
\email{milankj@tamu.edu}
\author{Jinmin Wang}
\address[Jinmin Wang]{Institute of Mathematics, Chinese Academy of Sciences}
\email{jinmin@amss.ac.cn}

\thanks{}

\date{}
\begin{abstract}
We use the Dirac operator method to prove a scalar-mean curvature comparison theorem for spin manifolds which carry iterated conical singularities. Our approach is to study the index theory of a twisted Dirac operator on such singular manifolds.
A dichotomy argument is used to prove the comparison theorem without knowing precisely the index of the twisted Dirac operator.
 This framework also enables us to prove a rigidity theorem of Euclidean domains and a spin positive mass theorem for asymptotically flat manifolds with iterated conical singularities.
\end{abstract}
\maketitle
\section{Introduction}
The study of comparison theorems in differential geometry, particularly those involving scalar curvature and mean curvature, has seen significant developments in recent decades.
In \cite{Gromov:2021}, Gromov proposed studying comparison theorems of scalar curvature alongside its companion, mean curvature. Lott, in \cite{Lott:2020}, established a scalar-mean comparison theorem for compact spin manifolds with boundary via Dirac operators, showing that for a model manifold with nonnegative curvature operator, nonnegative second fundamental form, and nonzero Euler characteristic, it is impossible to simultaneously increase its scalar curvature, mean curvature, and metric tensor. This result generalizes the scalar curvature comparison theorems of Llarull \cite{Llarull:1998} and Goette--Semmelmann \cite{Goette:2002} for closed manifolds. This Dirac operator method was also employed by Cecchini--Zeidler \cite{CecchiniZeidler2024} to show a scalar--mean comparison theorem for Riemannian bands. Meanwhile, using the method of \(\mu\)-bubbles, Wang, Zhu, and the second author \cite{WZ24} derived a Listing-type scalar-mean comparison result in low dimensions without assuming the spin condition.

The investigation of scalar-mean comparison theorems naturally extends to singular manifolds.
Gromov in \cite{Gromov:2021} posed the question whether Llarull's theorem still holds for spheres with subsets removed. For the case of antipodal points, this was answered in the affirmative, proven independently by \cite{MR4733718} and \cite{WangXie25}. If higher-dimensional subsets of the sphere are removed, then further regularity conditions are required \cite{cecchini2022lipschitz,WX24linf,CLZ24,CWXZ,Xiequant}.
Meanwhile, comparison theorems for polyhedra have been explored \cite{Gromov:2021,Wang:2023,MR4689374,WXwarped}. Notably, He--Shi--Yu in \cite{HeShiYu25} establish a minimal-surface approach for singular spaces.

In this paper, we are interested in a generalization of the comparison theorems where the model manifold itself carries \emph{non-isolated} asymptotically conical singularities.
This is motivated by Cheeger's foundational work \cite{MR730920}. Briefly speaking, a manifold \((M,g)\) carries asymptotically conical singularities if its metric near singularities locally takes the form
\begin{equation*}
    g=g_B+dr^2+r^2g_F(b)+\textup{higher order terms}
\end{equation*}
where \(g_B\) is a smooth metric pulled back from the base, \(r\) is the defining function that measures the distance from the singularity, and \(g_F(b)\) is a metric on the link fiber varying with the base, which may itself contain lower-dimensional conical singularities. The reader may refer to Sections \ref{sec:manifolds with facs} and \ref{sec:manifolds with iacs} for the precise definition. For manifolds with boundary, we assume that the singularities lie in the interior. Our main result is as follows.
\begin{theorem}\label{thm:intro-scalar curvature comparison}
	Let \((\dom{M}^{\tdim},\bnd{\dom{M}}^{\tdim-1},\dom{g})\) and \((M^{\tdim},\bnd{M}^{\tdim-1},g)\) be compact spin manifolds with iterated asymptotically conical singularities (IACS) and smooth boundaries \(\bnd{\dom{M}}\), \(\bnd{M}\).
    Assume that the singular points occur in codimension at least three.
	Suppose that the curvature operator of \(M\) and the second fundamental form on the boundary of \(M\) are nonnegative.
	Let \(F \colon \dom{M} \to M\) be an asymptotically canonical map, with \(F_\partial\) denoting its restriction to the boundary.
    Assume that one of the following conditions holds:
	\begin{enumerate}[label=\((\roman*)\)]
		\item The boundaries of \(\dom{M}\) and \(M\) coincide, namely, \(\bnd{\dom{M}} = \bnd{M}\) and \(g|_{\bnd{\dom{M}}}\) equals \(g|_{\bnd{M}}\)
		\item The Euler characteristic of \(\bnd{M}\) is nonzero and \(F\) has nonzero degree.
	\end{enumerate}
	If the following comparison conditions hold:
	\begin{enumerate}[label=\textnormal{(\arabic*)}]
		\item \(\Sc_{\dom{g}} \geq \onorm{dF}^2 F^*\Sc_g\)
		\item \(H_{\dom{g}} \geq \onorm{d(F_\partial)} (F_\partial)^*H_g\)
	\end{enumerate}
	then, we conclude
	\begin{enumerate}[label=\textup{(\Roman*)}]
		\item \(\Sc_{\dom{g}} = \onorm{dF}^2 F^* \Sc_g\)
		\item \(H_{\dom{g}} = \onorm{d(F_\partial)} (F_\partial)^*H_g\)
	\end{enumerate}
	Moreover,
	\begin{enumerate}[label=\((\mathrm{\Roman*}^\prime)\)]
		\item If \(\Ric_g > 0\), then \(\onorm{dF} \equiv a\) for some constant \(a > 0\) and \(F \colon (\dom{M}, a \cdot \dom{g}) \to (M,g)\) is a Riemannian covering map.
		\item If \((M, g)\) is flat, then \((\dom{M}, \dom{g})\) is Ricci flat.
	\end{enumerate}
\end{theorem}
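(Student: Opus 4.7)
My plan is to import the twisted Dirac operator technique of Llarull and Goette--Semmelmann, extended to boundary in the style of Lott, into the IACS category. On the spin manifold \(\dom{M}\) I would form the Dirac operator \(D_F\) twisted by the pulled-back spinor bundle \(F^*\mathbb{S}_M\) with the pulled-back connection. The Schr\"odinger--Lichnerowicz formula
\begin{equation*}
 D_F^2 \;=\; \conn^*\conn + \tfrac{1}{4}\Sc_{\dom{g}} + \curvoper^{F^*\mathbb{S}_M},
\end{equation*}
together with the Llarull-type pointwise estimate \(|\langle \curvoper^{F^*\mathbb{S}_M}\psi,\psi\rangle| \le \tfrac{1}{4}\onorm{dF}^2 F^*\Sc_g\cdot|\psi|^2\) (which uses the nonnegativity of the curvature operator of \(M\)), converts hypothesis (1) into nonnegativity of the bulk curvature term. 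Integration by parts on the boundary contributes a term of the form \(\int_{\bnd{\dom{M}}}(H_{\dom{g}}-\onorm{d(F_\partial)}(F_\partial)^*H_g)|\psi|^2\); hypothesis (2) makes this nonnegative once appropriate self-adjoint boundary conditions are imposed---chiral/mirror conditions in case (i), matching spinors across \(\bnd{\dom{M}}=\bnd{M}\), and an APS-type condition tied to \(\chi(\bnd{M})\) in case (ii).

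\textbf{Dichotomy and analysis on IACS.} The principal obstacle is making sense of \(D_F\), its closed extensions, and its Fredholm index on an iteratively conical space. The codimension \(\geq 3\) hypothesis is used precisely here: on a model cone \(\cone[r_0]{N}\) whose link \(N\) has dimension \(\ldim\geq 2\), the tangential Dirac operator on \(N\) has no eigenvalues in the critical gap \((-1/2,1/2)\), so \(D_F\) admits a unique closed Fredholm extension in the natural weighted \(L^2\) space. I would propagate this inductively in the depth of the stratification using the cone calculus (Cheeger, Br\"uning--Lesch), producing a Fredholm operator on all of \(\dom{M}\). The dichotomy argument then sidesteps computing the index exactly: either \(\ker D_F\) is nontrivial directly, or a small deformation of the twist bundle---mediated by \(\chi(\bnd{M})\neq 0\) and \(\deg F\neq 0\) in case (ii), and by the doubled manifold \(\dom{M}\cup_{\bnd{\dom{M}}} M\) in case (i)---forces a nonzero kernel in a limiting family. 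Either branch produces a nontrivial harmonic spinor \(\psi\) satisfying the integrated Weitzenb\"ock identity.

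\textbf{Equality case and rigidity.} Substituting such \(\psi\in\ker D_F\) into the Lichnerowicz identity and using the sign of each term forces \(\conn\psi=0\), pointwise equality in the Llarull estimate (yielding (I)), and pointwise equality in the boundary integrand (yielding (II)). For (I\('\)), the equality case of Goette--Semmelmann together with \(\Ric_g>0\) forces \(dF\) to be pointwise conformal; combined with parallelism of \(\psi\) and a connectedness argument, \(\onorm{dF}\) must equal a positive constant \(a\) on each component, and \(F\colon(\dom{M},a\cdot\dom{g})\to(M,g)\) then becomes a local isometry, hence a Riemannian covering. For (II\('\)), flatness of \(M\) makes \(F^*\mathbb{S}_M\) locally flat as a Clifford bundle, so the parallel section \(\psi\) of \(\mathbb{S}_{\dom{M}}\otimes F^*\mathbb{S}_M\) produces, via Clifford contraction, nontrivial parallel spinors on \(\dom{M}\), and the standard algebraic fact that a nontrivial parallel spinor forces Ricci-flatness concludes the argument. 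The main subtlety I anticipate, and the likely technical heart of the paper, is the justification that both the harmonic spinor and the rigidity conclusions extend across the singular strata without loss; this again uses the codimension \(\geq 3\) assumption, as parallel sections extend uniquely across removable subsets of codimension at least three.
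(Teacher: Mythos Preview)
Your overall architecture---twisted Dirac operator, Lichnerowicz--Llarull estimate, Witt-type gap from codimension $\geq 3$, inductive Fredholmness on the iterated cones---matches the paper. But the dichotomy step, which is the crux, is not what you describe, and your version would not close.

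The paper does \emph{not} produce a harmonic spinor by deforming the twist bundle, by a limiting family, or by doubling $\dom{M}\cup_{\bnd{\dom{M}}}M$. Both cases use the \emph{same} local chiral boundary condition $B$ (the $-1$-eigenspace of $\sqrt{-1}\,\dom{c}(\dom{\nu})c(\nu)$), not an APS condition. The dichotomy is: if $D_B$ is not invertible, take $\sigma\in\ker D_B$ and the estimate \eqref{eqn:scalar-mean curvature estimate with boundary condition} applies directly. If $D_B$ \emph{is} invertible, one exhibits a specific section $\widetilde{\psi}_\partial$ on $\bnd{\dom{M}}$ lying in the \emph{relative} condition $B^\perp$ and in the kernel of the boundary operator $D^{\partial,\partial}$, extends it arbitrarily to $\widetilde{\psi}$ on $\dom{M}$, and then uses invertibility of $D_B$ to solve $D\psi=-D\widetilde{\psi}$ with $\psi\in\operdom D_B$. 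The sum $\sigma=\psi+\widetilde{\psi}$ is harmonic but does \emph{not} satisfy $B$; nevertheless the extra boundary term $\int_{\bnd{\dom{M}}}\langle D^{\partial,\partial}\sigma,\sigma\rangle$ vanishes because $D^{\partial,\partial}$ swaps $B$ and $B^\perp$ and because $\widetilde{\psi}_\partial\in\ker D^{\partial,\partial}$. In case (i) the section $\widetilde{\psi}_\partial$ is the unit normal $\nu$ viewed in $\Cl(TM)|_{\bnd{M}}\cong S(T\dom{M}\oplus F^*TM)|_{\bnd{\dom{M}}}$; in case (ii) it comes from $\ker D^{\partial,\partial}\neq 0$ on the smooth closed boundary via Atiyah--Singer and $\chi(\bnd{M})\neq 0$. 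This is the ``Witten-style'' trick the introduction alludes to: one never computes or deforms an index on the singular manifold.

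Two smaller points. First, your rigidity sketch for (I$'$) (``connectedness argument'') is too light: the paper shows $F$ is a pointwise homothety, sets $h=\onorm{dF}$, derives $2h\Delta h=-n|dh|^2$ from the conformal scalar curvature law and $\Sc_{\dom g}=h^2F^*\Sc_g$, checks $dh(\dom\nu)=0$ on $\bnd{\dom M}$ from the mean-curvature equality, and then integrates $\operatorname{div}(h^k\nabla h)$ by parts; the contribution from the singular stratum vanishes only because the asymptotically canonical hypothesis on $F$ forces $\partial_r h=o(1/r)$ there. Second, for (II$'$) the paper does not pass through ``parallel spinors on $\dom{M}$'' but uses directly that flatness of $M$ makes the curvature of the twisted bundle depend only on $\curvtens[\dom M]$, so $0=\sum_i\dom{c}(\dom{e}_i)\curvtens_{\dom e_i,\dom e_j}\sigma=\tfrac12\dom{c}(\Ric_{\dom g}(\dom e_j))\sigma$.
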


To prove Theorem \ref{thm:intro-scalar curvature comparison}, we analyze a twisted Dirac operator \(D\) on \(\dom{M}\) with a specific boundary condition.
The study of \(D\) presents two major challenges.
\begin{enumerate}[label=(\alph*)]
	\item \emph{Fredholmness}: showing that \(D\) is Fredholm under the given boundary condition;
	\item \emph{Index computation}: determining the Fredholm index of \(D\).
\end{enumerate}

For spin Dirac operators or signature operators on manifolds with conical singularities, the essential self-adjointness and Fredholmness follow from a Witt-type condition \cite{Chou:1985,Bruning:1988,MR1449639,PierreJesse16,PiazzaPauloRosenberg23}, which requires that the associated link operators admit a spectral gap at least \(1/2\).
We demonstrate that this criterion extends to the twisted Dirac operator, assuming the scalar curvature comparison and the nonnegativity of the curvature operator on the target manifold.
Strictly speaking, one must assume that the conical singularities occur in codimension at least three. Indeed, the theorem fails for manifolds with codimension-two singularities (see Remark \ref{rem:counterexample to comparison theorem when 1-dimensional links are present}).

For manifolds with conical singularities, the index formula of Dirac operators typically involves the eta invariants arising from the singularities \cite[Main Theorem]{PierreJesse16}, which are by nature spectral and nonlocal.
In this work, we give a dichotomy argument to establish the comparison theorems \emph{without} having to know the precise index of the twisted Dirac operator. This approach is inspired by Witten's proof of the spin positive mass theorem \cite{MR626707}. As an application, we establish a scalar-mean rigidity theorem for Euclidean domains.

\begin{theorem}\label{thm:rigidity of NNSC fill-in - intro}
    Let \((M^{\tdim},\bnd{M}^{\tdim-1},g)\) be a compact spin manifold with IACS.
    Let \(F_\partial\colon\partial M\to\Sigma\) be a smooth map with nonzero degree, where \(\Sigma\) is a closed convex hypersurface in the Euclidean space $\R^n$.
    Assume that one of the following conditions holds:
    \begin{enumerate}[label=\((\roman*)\)]
        \item \(F_\partial\) is an isometry.
        \item \(n\) is odd.
    \end{enumerate}
    Let $H$ be the mean curvature of $\Sigma$. If \(\Sc_{g} \geq 0\) and \(H_{g} \geq \onorm{dF_\partial} (F_\partial)^*H\), then \(g\) is flat.
\end{theorem}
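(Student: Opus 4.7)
My plan is to deduce Theorem \ref{thm:rigidity of NNSC fill-in - intro} from Theorem \ref{thm:intro-scalar curvature comparison} by filling in $\Sigma$ with the compact convex region $\Omega \subset \R^n$ that it bounds. Since $\Omega$ is contractible, $F_\partial$ extends to a smooth map $F \colon M \to \Omega$, and I would arrange $F$ to be locally constant on a neighborhood of each conical stratum of $M$ so that $F$ is asymptotically canonical in the sense of the paper; this is possible because $\Omega$ is smooth and the strata of $M$ have codimension at least three.

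Next I would apply Theorem \ref{thm:intro-scalar curvature comparison} with the $M$ of our theorem playing the role of $\dom{M}$ and the model manifold being $(\Omega, g_{\mathrm{eucl}})$. The hypotheses hold: $\Omega$ has vanishing curvature operator, $\bnd{\Omega} = \Sigma$ has nonnegative second fundamental form by convexity, the scalar comparison reduces to $\Sc_g \geq 0$, and the mean comparison is the hypothesis. In case (i), $F_\partial$ is an isometry so the boundaries coincide and condition (i) of Theorem \ref{thm:intro-scalar curvature comparison} applies; in case (ii), $n$ is odd so $\Sigma \cong S^{n-1}$ has $\chi(\Sigma) = 2 \neq 0$, and since $F$ sends $\bnd{M}$ into $\bnd{\Omega}$ its relative degree agrees with $\deg F_\partial \neq 0$. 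Conclusion (II$'$) then gives that $(M, g)$ is Ricci flat.

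To upgrade Ricci-flatness to flatness I would revisit the equality case of the underlying Dirac-operator argument rather than using Theorem \ref{thm:intro-scalar curvature comparison} as a black box. The crucial feature of a Euclidean target is that the spinor bundle $S_\Omega$ is globally trivialized by $2^{\lfloor n/2 \rfloor}$ parallel spinors $\phi_1,\ldots,\phi_{2^{\lfloor n/2 \rfloor}}$, whose pullbacks give a parallel frame of the twist bundle $E = F^{\ast} S_\Omega$. In the rigidity case the twist curvature vanishes, Lichnerowicz forces any kernel element $\Psi$ to be $\conn$-parallel, and decomposing $\Psi = \sum_i \xi_i \otimes F^{\ast}\phi_i$ shows that each $\xi_i$ is a parallel spinor on $M$. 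Running the dichotomy argument once for each parallel boundary datum produced from the $\phi_i$ should yield enough parallel twisted spinors whose decompositions span the spinor bundle of $M$ pointwise; a manifold with a full parallel frame of spinors has vanishing Riemann tensor, so $g$ is flat.

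The main obstacle will be the last step, namely extracting $2^{\lfloor n/2 \rfloor}$ linearly independent parallel spinors on $M$ rather than just one. This requires tracking how prescribed parallel boundary data propagates through the boundary condition used for the twisted Dirac operator. In case (i), the isometric identification of boundaries makes the prescription straightforward; in case (ii), one must combine the $\chi(\Sigma) \neq 0$ mechanism from Theorem \ref{thm:intro-scalar curvature comparison} with the natural $2^{\lfloor n/2 \rfloor}$-dimensional family of parallel spinors on $\Omega$. Verifying this propagation across iterated conical strata, and simultaneously ensuring the asymptotically canonical property of $F$, is where the bulk of the work will lie.
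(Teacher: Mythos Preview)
Your setup—extending $F_\partial$ to $F\colon M\to\Omega$ constant near the singular strata and then invoking the scalar-mean comparison machinery with $(\Omega,g_{\mathrm{eucl}})$ as target—matches the paper exactly, including the observation that hypothesis (i) feeds into the coinciding-boundaries case of Theorem~\ref{thm:intro-scalar curvature comparison} while hypothesis (ii) feeds into the nonzero-Euler-characteristic case. The divergence is entirely in how you pass from one parallel twisted spinor to full flatness.

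The paper does not rerun the dichotomy or decompose along a parallel frame of the twist bundle. It takes the \emph{single} nonzero parallel section $\sigma$ of $S(TM\oplus F^*T\Omega)$ produced by the comparison argument, and then generates $2^{n-1}$ parallel sections by right Clifford multiplication $c(\lambda)$ with monomials $\lambda\in\Cl(\R^n)$. Linear independence is not obtained by varying boundary data but by exploiting a point $x\in\partial M$ with $F_\partial(x)$ strictly convex in $\Sigma$: the equality case of the mean-curvature estimate in Proposition~\ref{prop:scalar-mean curvature estimate} forces $\hat c_\partial(\hat e_i)c_\partial(e_i)\sigma(x)=\sigma(x)$ for each principal direction, and since these operators anticommute with $c(\lambda)$ for nontrivial $\lambda$, the family $\{c(\lambda)\sigma\}$ is orthogonal at $x$, hence everywhere by parallelism. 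This trivializes a half of the twisted bundle (the $+1$-eigenspace of $\gamma_1$ when $n$ is even, of $\gamma_2$ when $n$ is odd) by parallel sections and forces $g$ flat.

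Your route—writing $\Psi=\sum_i\xi_i\otimes F^*\phi_i$ and hoping to span $SM$—hits exactly the obstacle you name: a single parallel $\Psi$ may have only one nonzero component $\xi_i$, and rerunning the boundary-value problem for each $\phi_i$ gives no a priori reason the outputs are independent. The paper's Clifford-multiplication trick together with the strict-convexity eigenvalue argument is precisely the mechanism that supplies the missing independence, and it avoids having to solve the Dirac problem more than once.
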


Moreover, we extend this idea to prove a positive mass theorem for asymptotically flat (AF for short) spin manifolds with iterated asymptotically conical singularities. The famous positive mass theorem states that an asymptotically flat smooth manifold with nonnegative scalar curvature must have nonnegative ADM mass. This result was first proved by Schoen--Yau using minimal surfaces \cite{SchoenYauPMT}. Witten also proved the spin positive mass theorem for all dimensions \cite{MR626707}. It is a natural problem to generalize the positive mass theorem to singular Riemannian manifolds. However, the Schwarzchild metric, which admits a point with a horn singularity, is scalar-flat but has negative mass. In \cite{MR3961306}, Li--Mantoulidis proved a positive mass theorem for AF manifolds with $L^\infty$-metric and fiberwise conical singularities. Dai--Sun--Wang also proved a positive mass theorem for singular spaces with isolated conical singularities \cite{MR4880457,DaiSunWang}.

In this paper, we extend the idea of our main result and prove a positive mass theorem for manifolds with iterated conical singularities.
\begin{theorem}\label{thm:intro-positive mass theorem for iterated asymptotically conical spaces}
    Let \((M^n,g)\) be an n-dimensional asymptotically flat spin manifold with iterated asymptotically conical singularities.
    Suppose that the links of \(M\) have dimension at least two.
    If \(\Sc_g \geq 0\), then \(m(g) \geq 0\).
    Moreover, if \(m(g) = 0\), then the metric \(g\) is flat.
\end{theorem}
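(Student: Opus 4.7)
The plan is to adapt Witten's spinor proof of the positive mass theorem to the IACS setting, using the Fredholm theory for Dirac operators developed in the proof of Theorem~\ref{thm:intro-scalar curvature comparison}, specialized to the untwisted spin Dirac operator $D$ on $(M,g)$. First I set up a weighted Sobolev framework adapted to the two types of ends of $M$: polynomial weights on the asymptotically flat end in Witten's style, and cone-calculus weights near each singular stratum. Since links have dimension at least two, the Witt-type spectral gap condition needed for Fredholmness of $D$ near the conical strata is inherited from the analysis used for Theorem~\ref{thm:intro-scalar curvature comparison} applied to the untwisted bundle, yielding a Fredholm operator between the appropriate weighted Sobolev spaces.

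Next I would produce a harmonic spinor $\psi$ asymptotic to a prescribed constant spinor $\psi_0$ at infinity via a dichotomy argument in the spirit of Theorem~\ref{thm:intro-scalar curvature comparison}. Extending $\psi_0$ across $M$ by a cutoff to obtain $\bar\psi_0$, one asks whether the compactly supported spinor $-D\bar\psi_0$ lies in the range of $D$. If it does, any $\phi$ with $D\phi = -D\bar\psi_0$ gives $\psi := \phi + \bar\psi_0$ satisfying $D\psi = 0$ and $\psi \to \psi_0$ at infinity. If it does not, Fredholm duality produces a nontrivial $\eta$ in the weighted $L^2$ kernel of $D$; combining the Lichnerowicz formula with $\Sc_g \geq 0$ forces $\eta$ to be parallel on the regular part, and pushing it across the singular strata via cone calculus (possible because link dimension is at least two) extends it to a parallel spinor on $M$, which forces $(M,g)$ to be Ricci-flat and hence, combined with the AF condition, globally flat -- in which case $m(g)=0$ and the theorem holds. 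One may therefore restrict to the first branch of the dichotomy.

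With $\psi$ in hand, I apply the Lichnerowicz--Weitzenb\"ock identity on the truncation $M_{R,\epsilon}$ obtained by removing a ball of Euclidean radius $R$ at infinity and $\epsilon$-tubes $N_\epsilon$ around the singular strata:
\begin{equation*}
\int_{M_{R,\epsilon}} \Bigl( |\nabla \psi|^2 + \tfrac{1}{4} \Sc_g\, |\psi|^2 \Bigr) \dvol \;=\; \int_{|x|=R} \alpha_\psi \;-\; \int_{\bnd{N_\epsilon}} \beta_\psi,
\end{equation*}
where $\alpha_\psi,\beta_\psi$ are the standard Witten boundary integrands. As $R\to\infty$ the outer term yields Witten's formula $c_n\, m(g)\,|\psi_0|^2$. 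As $\epsilon\to 0$ the inner term vanishes: the link boundary has area of order $\epsilon^{n-1-k}$ with $k$ the dimension of the singular stratum, and the Witt spectral gap controls $|\psi|^2$ and $|\nabla\psi|^2$ so that the integrand decays faster than the area shrinks. Since $\Sc_g\geq 0$, the left-hand side is nonnegative and yields $m(g)\geq 0$. For rigidity, $m(g)=0$ forces $\nabla\psi=0$ on the regular part for every choice of $\psi_0$; varying $\psi_0$ through a basis at infinity produces a full frame of parallel spinors, so the regular part is Ricci-flat, and AF decay then implies it is Euclidean. The iterated cone structure then forces the singular strata to collapse to the vertex of a flat cone, so $g$ is globally flat.

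The main obstacle is the control of the boundary terms at the singular strata. One must produce a sharp asymptotic expansion of $\psi$ near each stratum so that the Witt gap yields decay rates making the boundary integrals vanish in the limit; for iterated singularities this requires an induction on the depth of nesting, and the codimension-$\geq 3$ hypothesis is essential here, cf.\ Remark~\ref{rem:counterexample to comparison theorem when 1-dimensional links are present}. A secondary subtlety is the rigidity step, where one must verify that parallel spinors defined a priori only on the regular part indeed force all singular strata of an AF-IACS manifold to be the flat-cone vertex.
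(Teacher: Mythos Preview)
Your overall strategy---Witten's spinor method with weighted Sobolev spaces and the Fredholm machinery from the IACS Dirac analysis---matches the paper. Two points of comparison are worth noting.

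First, your dichotomy is unnecessary and its second branch is slightly off. The paper simply proves that $D\colon H^1_{-\tau}\to L^2_{-\tau-1}$ is \emph{invertible} (Lemma~\ref{lem:invertibility of dirac operator}): any element of the kernel or cokernel is, by Lichnerowicz and $\Sc_g\ge 0$, parallel and hence of constant nonzero norm, which is incompatible with membership in the relevant weighted $L^2$ space on an infinite-volume AF end. So your second branch is vacuous; you never need the claim ``Ricci-flat $+$ AF $\Rightarrow$ flat'' for singular manifolds, which you have not justified.

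Second, and more substantively, your proposed control of the inner boundary term $\int_{\partial N_\epsilon}\beta_\psi$ via pointwise asymptotic expansions of $\psi$ near the strata is exactly what the paper \emph{avoids}. You flag this as the ``main obstacle,'' and it is genuinely hard in the iterated setting. The paper's argument is much cleaner: since the Lichnerowicz identity $\int|D\sigma|^2=\int|\nabla\sigma|^2+\int\tfrac{\Sc_g}{4}|\sigma|^2$ holds for smooth compactly supported sections (no boundary terms), and all three functionals are continuous on $H^1_{-\tau}$, the identity holds on all of $H^1_{-\tau}$ by density. Then comparing this with the direct application of Stokes on a truncated region forces $\lim_{\epsilon\to 0}\int_{d_{\cat C}=\epsilon}\langle L^\partial\sigma_1,\sigma_1\rangle=0$ automatically, with no pointwise asymptotics needed. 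This density trick is the key simplification you are missing.
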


This paper is organized as follows.
In Section \ref{sec:manifolds with facs}, we define the notion of a manifold with fiberwise asymptotically conical singularities (FACS), a special case of the notion of a manifold with IACS.
After this, some geometric preliminaries necessary to give a Dirac operator-theoretic proof of our scalar-mean curvature comparison result are presented.
The heart of the proof of Theorem \ref{thm:intro-scalar curvature comparison} lies in the special case of manifolds with FACS. 
Section \ref{sec:fredholmness} establishes the essential self-adjointness of the naturally defined twisted Dirac operator in this special case, from which Fredholmness will follow by a standard argument using compact resolvents.
Section \ref{sec:scalar-mean curvature comparison results} proves Theorem \ref{thm:intro-scalar curvature comparison} for manifolds with FACS using the results of Sections \ref{sec:manifolds with facs} and \ref{sec:fredholmness}.
In Section \ref{sec:manifolds with iacs}, the notion of a manifold with IACS is made precise. Theorem \ref{thm:intro-scalar curvature comparison} will follow from the same proofs given in Sections \ref{sec:fredholmness} and \ref{sec:scalar-mean curvature comparison results}.
Lastly, in Section \ref{sec:applications}, we present some applications of the methods developed throughout the paper. In particular, Theorems \ref{thm:rigidity of NNSC fill-in - intro} and \ref{thm:intro-positive mass theorem for iterated asymptotically conical spaces} are proven.

\vspace{.5cm}
\textbf{Acknowledgments.} We would like to thank Zhizhang Xie for very helpful comments.

\section{Manifolds with fiberwise asymptotically conical singularities}\label{sec:manifolds with facs}
In this section, we make precise the notion of a manifold with fiberwise asymptotically conical singularities (FACS)
and the notion of an asymptotically conical map between manifolds with FACS.
After that, we review some basic facts regarding the geometry of the cone of a Riemannian manifold.
Then, the geometry of an asymptotic family of cones over a base is compared to the geometry of just a regular family of cones over the same base.
In particular, the asymptotic behavior of the error between the scalar curvatures is computed in Proposition \ref{prop:asymptotic scalar curvature estimate for family of cones}.
We then present an explicit relationship between the twisted Dirac operator on a cone and the twisted Dirac operator on its link.
Finally, some standard Listing-type estimates are established.

\subsection{Geometric setup} 

\begin{definition}\label{def:FACS manifolds}
    Let \(M\) be a topological space.
    We call a neighborhood \(U\) a \textit{conical neighborhood} of \(p \in M\) if \(U\) is homeomorphic to a direct product of \(\R^\hdim\) with the cone \(\cone[\varepsilon]{L}\), where \(L\) is a smooth manifold without boundary, for which \(p\) is taken to a point of \(\R^\hdim \times \{0\}\) under the homeomorphism.\footnote{By definition, \(\cone[\varepsilon]{L} = [0, \varepsilon) \times L / \sim\), where \(\sim\) corresponds to crushing \(\{0\} \times L\) to a point.}
    The point \(p\) is a called a \textit{singular point} of \(M\).

    A topological space \(M\) is said to be an \textit{\(\tdim\)-dimensional manifold with fiberwise asymptotically conical singularities} if the following conditions hold:
    \begin{enumerate}[label=(\alph*)]
        \item \emph{Topology:} There is an open cover of \(M\) for which every open set in the cover is either homeomorphic to \(\R^\tdim\) or is a conical neighborhood of some singular point of \(M\).
        The transition functions between two nontrivially intersecting neighborhoods \(U\) and \(V\) both homeomorphic to \(\R^\tdim\) are required to be smooth.
        The transition functions between two nontrivially intersecting conical neighborhoods \(U \cong \R^\hdim \times \cone[\varepsilon]{L}\) and \(V \cong \R^\hdim \times \cone[\varepsilon']{L'}\) are required to take the form:
        \begin{equation*}
            F(b,r,x) = \big(\psi(b), \varphi(b,r), f(b,x)\big)
        \end{equation*}
        where \(b\) represents a point on the base space, \(r\) corresponds to the radial coordinate, and \(x\) is an element of the link.
        The functions \(\psi\), \(\varphi\), and \(f\) are required to be smooth away from \(r=0\), and \(\varphi\) is assumed to take the form \(\varphi(b,r) = a(b) \cdot r + o_2(r)\) near \(r=0\), with \(a(b) > 0\) for all \(b\).\footnote{\(\alpha(r) = o_2(r)\) means that \(\alpha(r) = o(r)\), \(\alpha'(r) = o(1)\), and \(\alpha''(r) = o(1/r)\).}

        The \textit{singular stratum}, also sometimes referred to as the \(\cat{C}_1\)-stratum (see Section \ref{sec:manifolds with iacs}), is the set of singular points of \(M\).
        The singular stratum is denoted by \(\cat{C}\).
        The smooth stratum is defined to be the complement \(M \setminus \cat{C}\).
        \item \emph{Geometry:} There is a metric \(g_\cat{C}\) on \(\cat{C}\) and a metric \(g\) defined on the smooth stratum of \(M\) such that in a conical neighborhood \(U = \R^\hdim \times \cone[\varepsilon]{L}\) of a singular point \(p\) the metric \(g\) takes the form
        \begin{equation*}
            g|_U = \pi^*g_{\cat{C}}|_{\pi(U)} \oplus \beta'(b,r)^2 dr^2 \oplus \beta(b,r)^2 g_L(b) + o_2(r^2)
        \end{equation*}
        subject to the following criteria:
        \begin{enumerate}[label=(\arabic*)]
            \item \(\pi^*g_{\cat{C}}|_{\pi(U)}\) is the pullback of the metric on the singular stratum under the projection \(\pi \colon U \to \R^\hdim\).
            \item \(g_L(b)\) is a family of metrics on the link varying over the base.
            \item \(\beta(b,r)\) is a real-valued function that is smooth away from \(r=0\) and is of the form \(\beta(b,r) = a(b) \cdot r + o_2(r)\) near \(r=0\), with \(a(b) > 0\) for all \(b\).
        \end{enumerate}
        To ease the notation, we will often write \(\pi^*g_B\) to denote the pullback of the base metric, instead of \(\pi^*g_{\cat{C}}|_{\pi(U)}\).
    \end{enumerate}
\end{definition}
The phrase ``manifold with fiberwise asymptotically conical singularities" is sometimes shortened to ``manifold with FACS" or just ``FACS manifold".

Some remarks regarding Definition \ref{def:FACS manifolds} are in order.
First, observe that if \(g\) decomposes as
\begin{equation*}
    g|_U = \pi^*g_B \oplus \beta'(b,r)^2 dr^2 \oplus \beta(b,r)^2 g_L(b) + o_2(r^2)
\end{equation*}
in a conical neighborhood \(U\) of a singular point, then after reparametrizing the radial coordinate to \(\tilde{r} = \beta(b,r)\), the metric becomes
\begin{equation*}
    g|_U = \pi^*g_B \oplus d\tilde{r}^2 \oplus \tilde{r}^2 g_L(b) + o_2(\tilde{r}^2).
\end{equation*}
Thus, after a reparametrization, the metric corresponds to a Riemannian submersion of a family of cones over the base space, up to asymptotic error terms.
Second, for a singular point \(p \in M\) with conical neighborhood \(U \cong \R^\hdim \times \cone[\varepsilon]{L}\), the smooth manifold \(L\) is called the \textit{link of \(p\)}.
It is not hard to see that if \(\mathcal{C}\) is connected, every singular point will have the same link.
Hence, the notion of a link pertains to the connected components of \(\mathcal{C}\), rather than to any particular singular point.
We shall use this fact without explicit mention in the sequel to simplify the exposition.
Third, the definition of a FACS manifold can be naturally extended to define the notion of a \textit{FACS manifold with FACS boundary} by allowing the conical neighborhoods to have base \(B = \R^\hdim\) or \(B = \mathbb{H}^\hdim\).\footnote{\(\mathbb{H}^\hdim\) is the \(\hdim\)-dimensional half-space}
Finally, observe that the singular stratum depends on the covering atlas.
Recall that \(\cone[\varepsilon]{\sphe^\ldim}\), where \(\sphe^\ldim\) is the \(l\)-dimensional sphere, is just the polar representation of a ball of radius \(\varepsilon\) in \(\R^{\ldim+1}\).
Thus, when the links are spheres, \(M\) can be a smooth manifold even though the so-called ``singular stratum" \(\cat{C}\) is nonempty.

\begin{definition}
    If \(\dom{M}\) and \(M\) are manifolds with FACS, a map \(F\) is said to be asymptotically conical if
    \begin{enumerate}
        \item \(F\) maps smooth points to smooth points and singular points to singular points.
        \item \(F\) is smooth when restricted to the smooth stratum.
        \item If \(F\) maps a conical neighborhood \(\dom{U} \cong \R^{\dom{\hdim}} \times \cone[\dom{\varepsilon}]{\dom{L}}\) into a conical neighborhood \(U \cong \R^\hdim \times \cone[\varepsilon]{L}\), then, \(\dom{\hdim} = \dom{h}\), \(\dim \dom{L} = \dim L\), and in the trivialization \(F\) takes the form
            \begin{equation*}
                F(\dom{b},r,\dom{x}) = \big(\psi(\dom{b}), \varphi(\dom{b},r), f(\dom{b},\dom{x})\big)
            \end{equation*}
        where \(\dom{b}\) represents a point on the base space, \(r\) corresponds to the radial coordinate, and \(\dom{x}\) is an element of the link.
        The functions \(\psi\), \(\varphi\), and \(f\) are required to be smooth away from \(r=0\) and \(\varphi\) is assumed to take the form \(\varphi(\dom{b},r) = a(\dom{b}) \cdot r + o_2(r)\) near \(r=0\), where \(a(\dom{b}) > 0\) for all \(\dom{b}\).
    \end{enumerate}
\end{definition}

We now state the scalar-mean curvature comparison theorem for manifolds with FACS, a special case of Theorem \ref{thm:intro-scalar curvature comparison}.

\begin{theorem}\label{thm:scalar-curvature comparison - manifolds with facs}
    Let \((\dom{M}^{\tdim},\bnd{\dom{M}}^{\tdim-1},\dom{g})\) and \((M^{\tdim},\bnd{M}^{\tdim-1},g)\) be compact spin manifolds with FACS and smooth boundaries \(\bnd{\dom{M}}\), \(\bnd{M}\).
    Assume that the links of \(\dom{M}\) and \(M\) have dimension at least two.
    Suppose that the curvature operator of \(M\) and the second fundamental form on the boundary of \(M\) are nonnegative.
    Let \(F \colon \dom{M} \to M\) be an asymptotically conical map, with \(F_\partial\) denoting its restriction to the boundary.
    Assume that one of the following conditions holds:
	\begin{enumerate}[label=\((\roman*)\)]
		\item The boundaries of \(\dom{M}\) and \(M\) coincide, namely, \(\bnd{\dom{M}} = \bnd{M}\) and \(g|_{\bnd{\dom{M}}}\) equals \(g|_{\bnd{M}}\).
		\item The Euler characteristic of \(\bnd{M}\) is nonzero and \(F\) has nonzero degree.
	\end{enumerate}
	If the following comparison conditions hold:
	\begin{enumerate}[label=\textnormal{(\arabic*)}]
		\item \(\Sc_{\dom{g}} \geq \onorm{dF}^2 F^*\Sc_g\)
		\item \(H_{\dom{g}} \geq \onorm{d(F_\partial)} (F_\partial)^*H_g\)
	\end{enumerate}
	then, we conclude
	\begin{enumerate}[label=\textup{(\Roman*)}]
		\item \(\Sc_{\dom{g}} = \onorm{dF}^2 F^* \Sc_g\)
		\item \(H_{\dom{g}} = \onorm{d(F_\partial)} (F_\partial)^*H_g\)
	\end{enumerate}
	Moreover,
	\begin{enumerate}[label=\((\mathrm{\Roman*}^\prime)\)]
		\item If \(\Ric_g > 0\), then \(\onorm{dF} \equiv a\) for some constant \(a > 0\) and \(F \colon (\dom{M}, a \cdot \dom{g}) \to (M,g)\) is a Riemannian covering map.
		\item If \((M, g)\) is flat, then \((\dom{M}, \dom{g})\) is Ricci flat.
	\end{enumerate}
\end{theorem}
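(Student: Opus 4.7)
The plan is to introduce a twisted Dirac operator $D$ on the smooth stratum of $\dom{M}$ whose twisting bundle is built from $F^*\mathcal{S}_M$ (or, in case (ii), from $F^*(\mathcal{S}_M\otimes E)$ for an auxiliary bundle $E$ on $M$ whose Chern character detects $\chi(\bnd{M})$, following the construction of Lott \cite{Lott:2020}), equipped with a local boundary condition of Lott type. On the smooth stratum, the twisted Lichnerowicz--Weitzenb\"ock identity reads
$$D^2 = \nabla^*\nabla + \tfrac14\Sc_{\dom{g}} + \mathcal{R}^{\mathrm{tw}},$$
where the twisting curvature endomorphism $\mathcal{R}^{\mathrm{tw}}$ is controlled by the Goette--Semmelmann estimate \cite{Goette:2002}: the nonnegativity of the curvature operator on $M$ yields $\mathcal{R}^{\mathrm{tw}} \geq -\tfrac14\onorm{dF}^2 F^*\Sc_g$ pointwise. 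Combined with hypothesis (1), this gives $D^2 \geq \nabla^*\nabla$ on the smooth stratum.

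The boundary condition should be chosen so that Green's formula produces a boundary integrand controlled by the difference $H_{\dom{g}} - \onorm{d(F_\partial)} (F_\partial)^* H_g$, with the nonnegative second fundamental form of $\bnd{M}$ dominating the contribution of the twisting bundle at the boundary. Hypothesis (2) then renders this boundary term nonnegative, yielding the integrated inequality $\norm{D\psi}_{L^2}^2 \geq \norm{\nabla\psi}_{L^2}^2$ on the domain of $D$. The Fredholmness and essential self-adjointness of $D$ under this boundary condition follow from Section \ref{sec:fredholmness}: the hypothesis that the links have dimension at least two (singular codimension $\geq 3$) forces the associated link operators to satisfy a Witt-type spectral gap, so $D$ has compact resolvent.

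The main obstacle is to extract equalities (I) and (II) without computing $\mathrm{ind}(D)$ explicitly, since on a conical singular space the index of a Dirac operator involves nonlocal eta invariants of the links that cannot be evaluated directly. To bypass this I would run a Witten-style dichotomy (cf.\ \cite{MR626707}): either the comparison inequalities in (1), (2) are in fact equalities, yielding (I), (II) directly; or at least one of them is strict on a set of positive measure. In the latter case, the Weitzenb\"ock estimate upgrades to $\norm{D\psi}^2 \geq \norm{\nabla\psi}^2 + c\norm{\psi}_{L^2(U)}^2$ on some open $U \subset \dom{M}$, and a unique-continuation argument forces $\ker D = 0$. On the other hand, each topological hypothesis produces a distinguished nonzero element of $\ker D$: in case (i), the identification $\bnd{\dom{M}} = \bnd{M}$ with matching induced metric allows a doubling/gluing construction that transports a parallel spinor on $M$ to a harmonic spinor for $D$; in case (ii), the nonvanishing of $\chi(\bnd{M})\cdot \deg F$ is detected at the boundary via the Lott-type index pairing. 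Either horn of the dichotomy gives a contradiction under strict inequality, forcing (I) and (II). The rigidity statements (I$'$), (II$'$) then follow from the equality case of the Weitzenb\"ock identity: any $D$-harmonic spinor becomes $\nabla$-parallel in the twisted bundle, which, when $\Ric_g > 0$, forces $\onorm{dF}$ to be constant and $F$ to be a Riemannian covering after rescaling; in the flat case, $\dom{g}$ is Ricci-flat on the smooth stratum and extends across the singular stratum by the codimension assumption.
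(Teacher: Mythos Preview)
Your overall architecture is close to the paper's, but the dichotomy you describe is not the one that actually works, and the gap is precisely at the step the paper identifies as the main obstacle. You phrase the dichotomy as ``either the comparison inequalities are equalities, or they are strict somewhere, in which case $\ker D=0$; but the topological hypothesis produces a nonzero element of $\ker D$, contradiction.'' The second horn requires you to exhibit a kernel element of the twisted Dirac operator $D_B$ (with the local boundary condition) purely from the topological data. That is exactly an index statement, and on a FACS manifold the index formula for $D_B$ involves eta contributions from the links which you have no way to evaluate. Your case (i) suggestion of ``transporting a parallel spinor on $M$'' does not work since $(M,g)$ need not carry any parallel spinor, and your case (ii) appeal to a ``Lott-type index pairing'' is precisely the computation the paper is trying to avoid.

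The paper's dichotomy is different: it splits on whether $D_B$ is \emph{invertible}. If $D_B$ is not invertible, a nonzero kernel element is parallel by the Weitzenb\"ock estimate and gives (I), (II) as you say. If $D_B$ \emph{is} invertible, one does not look for an element of $\ker D_B$ at all. Instead one chooses a section $\widetilde{\psi}_\partial$ on the (smooth) boundary satisfying the \emph{relative} condition $B^\perp$ and annihilated by the boundary Dirac operator $D^{\partial,\partial}$ --- in case (i) this is simply the unit normal $\nu$ viewed in $\Cl(TM)|_{\partial M}$, and in case (ii) it is supplied by Atiyah--Singer on the smooth closed boundary, using $\chi(\partial M)\cdot\deg F\neq 0$. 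One extends $\widetilde{\psi}_\partial$ arbitrarily to $\widetilde{\psi}$ on $\dom{M}$, then uses the \emph{invertibility} of $D_B$ to solve $D\psi=-D\widetilde{\psi}$ with $\psi\in\operdom(D_B)$. The section $\sigma=\psi+\widetilde{\psi}$ is $D$-harmonic but lies in neither $B$ nor $B^\perp$; nevertheless, because $D^{\partial,\partial}$ interchanges $B$ and $B^\perp$ and kills $\widetilde{\psi}_\partial$, the boundary term $\int_{\partial\dom{M}}\langle D^{\partial,\partial}\sigma,\sigma\rangle$ vanishes, and the Weitzenb\"ock inequality again forces (I) and (II). This is the genuine Witten-style step: invertibility is used to \emph{solve} a problem, not contradicted by a kernel element you cannot produce.
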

\begin{remark}\label{rem:counterexample to comparison theorem when 1-dimensional links are present}
    The above theorem is false if 1-dimensional links are allowed.
    Consider
    \begin{align*}
        \dom{M} &= [0,1] \times \sphe^1 / \sim \text{ with } \dom{g} = dr^2 + r^2 g_{\sphe^1} \\
        M &= [0,a] \times \sphe^1 / \sim \text{ with } g=dr^2 + \frac{1}{a^2}r^2 g_{\sphe^1}
    \end{align*}
    Let \(F(r,x) = (ar,x)\).
    Then the restriction of \(F\) to the boundary is an isometry, the scalar curvatures of both metrics are identically zero, but \(H_{\dom{g}} = 1\) and \(H_g = \frac{1}{a}\).
    This gives a counterexample to Theorem \ref{thm:scalar-curvature comparison - manifolds with facs} as along as \(a > 1\).
\end{remark}
The above theorem holds not only for compact spin manifolds with FACS, but also for so called compact spin \(\cat{C}_m\)-manifolds.
This is discussed in Section \ref{sec:manifolds with iacs}.

\subsection{Geometry of the cone}
Let \((L^{\tdim-1},g)\) be a Riemannian manifold with Levi-Civita connection \(\conn[L]\) and curvature tensor \(\curvtens[L]\).
The curvature operator of \((L,g)\) is denoted \(\curvoper[L]\) and is defined pointwise on \(\bigwedge^2TM\) by the equation
\begin{equation*}
    \inner{\curvoper[L](e_i\wedge e_j),e_k\wedge e_l}=-\inner{\curvtens[L]_{e_i,e_j}e_k,e_l}.
\end{equation*}
The sign is chosen so that all the sectional curvatures are nonnegative if \(\curvoper[L]\) is a nonnegative operator.
The scalar curvature is then given by
\begin{equation*}
    \Sc_L=\sum_{i,j}\left<\curvtens[L]_{e_i,e_j}e_j,e_i\right>=2\trac\curvoper[L]
\end{equation*}

Let \((CL^{\tdim},g_{CL})\) denote the cone of \((L,g)\).
Topologically, \(CL\) is just the product \(CL = (0,\infty)\times L\).\footnote{Throughout this paper, \(\cone{L}\) is used to denote both the cone with the singular point included as well as the cone take away the singular point. It should be clear from the context to which we refer.} The metric is given by \(g_{CL}=dr^2 + r^2g_L\).
Of course, \(L\) isometrically sits inside its cone with \(L\simeq \{1\}\times L \subseteq CL\).
If \(L\) is oriented, then we endow \(CL\) with the product orientation on \((0, \infty) \times L\).

Given an orthonormal frame \(\{e_1,\ldots,e_n\}\) on \(L\) defined in an open neighborhood \(U\), it naturally induces an orthonormal frame on \((0,\infty)\times U=\cone{U}\subseteq \cone{L}\) given by \(\{\partial_r,\frac{e_1}{r},\ldots,\frac{e_n}{r}\}\).
Using this association, we can write the connection \(\conn[CL]\) in terms of the connection \(\conn[L]\).
\begin{proposition}[cf. {\cite[Lemma 2.2]{Chou:1985}}]\label{prop:covariant derivative on cone}
    \begin{align*}
        \left< \conn[CL]_{\frac{e_i}{r}}\frac{e_j}{r},\frac{e_k}{r} \right>_{CL}&=\frac{1}{r}\left< \conn[L]_{e_i}e_j,e_k \right>_{L}\\
        \left< \conn[CL]_{\frac{e_i}{r}}\frac{e_j}{r}, \partial_r \right>_{CL}&=-\frac{1}{r}\delta_{ij}\\
        \left< \conn[CL]_{\frac{e_i}{r}}\partial_r, \frac{e_j}{r} \right>_{CL}&=\frac{1}{r}\delta_{ij}\\
        \left< \conn[CL]_{\frac{e_i}{r}}\partial_r, \partial_r \right>_{CL}&=0
    \end{align*}
\end{proposition}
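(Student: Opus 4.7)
The plan is to compute each of the four inner products directly via the Koszul formula on $(\cone{L}, g_{CL})$, using only that $g_{CL} = dr^2 + r^2 g_L$ and that a local $L$-orthonormal frame $\{e_i\}$ extends to $(0,\infty) \times U \subset \cone{L}$ in the obvious $r$-independent way. The rescaled frame $\{\partial_r, e_i/r\}$ is then $g_{CL}$-orthonormal, and for any two vector fields $V_1, V_2$ pulled back from $L$ one has $\langle V_1, V_2\rangle_{CL} = r^2 \langle V_1, V_2\rangle_L$.

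First I would record the Lie brackets of the frame fields. Since the $e_i$ are $r$-independent, $[\partial_r, e_i] = 0$ on $\cone{L}$, from which
\[
[\partial_r, e_i/r] = -e_i/r^2, \qquad [e_i/r, e_j/r] = (1/r^2)[e_i, e_j]_L.
\]
In particular, $[e_i/r, e_j/r]$ is tangent to $L$ and therefore $g_{CL}$-orthogonal to $\partial_r$. Moreover, every derivative of a constant inner product among frame fields vanishes; for instance $(e_i/r)\langle \partial_r, \partial_r\rangle_{CL} = 0$ and $\partial_r\langle e_i/r, e_j/r\rangle_{CL} = \partial_r(\delta_{ij}) = 0$.

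Next I would feed these into the Koszul formula
\[
2\langle \conn_X Y, Z\rangle = X\langle Y,Z\rangle + Y\langle X,Z\rangle - Z\langle X,Y\rangle + \langle [X,Y], Z\rangle - \langle [X,Z], Y\rangle - \langle [Y,Z], X\rangle
\]
for each desired triple. The first identity is the only one requiring genuine cancellation: the three derivative terms vanish because $\{\partial_r, e_i/r\}$ is orthonormal, and the three bracket terms reduce, via $\langle V_1, V_2\rangle_{CL} = r^2 \langle V_1, V_2\rangle_L$ on $L$-tangential vectors, to exactly $(1/r)$ times the corresponding Koszul expression on $L$, yielding $(1/r)\langle \conn[L]_{e_i} e_j, e_k\rangle_L$ on the right-hand side. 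The second identity follows in the same way with $Z = \partial_r$: the derivative terms vanish, $[e_i/r, e_j/r]$ drops out as noted, and the two remaining bracket terms $\langle [e_i/r, \partial_r], e_j/r\rangle_{CL}$ and $\langle [e_j/r, \partial_r], e_i/r\rangle_{CL}$ each contribute $\delta_{ij}/r$, giving $-\delta_{ij}/r$ after the sign pattern in Koszul.

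The remaining two identities I would deduce most cheaply from metric compatibility once the second is in hand: differentiating $\langle \partial_r, e_j/r\rangle_{CL} = 0$ along $e_i/r$ produces the third identity, and differentiating $\langle \partial_r, \partial_r\rangle_{CL} = 1$ along $e_i/r$ produces the fourth. The main obstacle is purely bookkeeping: one must track consistently the factor $r^2$ between $g_{CL}$ and $g_L$ on $L$-tangential vectors along with the factors of $1/r$ produced by rescaling the frame. An alternative route would be to recognize $\cone{L} = (0,\infty) \times_r L$ as a warped product and invoke the standard warped-product connection formulas with warping function $f(r) = r$; I would use this only as a cross-check.
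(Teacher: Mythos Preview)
Your proposal is correct and follows essentially the same approach as the paper, which simply states that the identities follow from the Koszul formula and writes the formula out. Your write-up is considerably more detailed than the paper's one-line proof, and your shortcut of deducing the third and fourth identities from metric compatibility (rather than reapplying Koszul) is a harmless variation.
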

\begin{proof}
    This follows from the Koszul formula:
    \begin{multline*}
        2\left< \nabla_X Y,Z \right>=X\left< Y,Z \right> + Y\left< X,Z \right> - Z\left< X,Y \right>\\ + \left< [X,Y],Z \right> - \left< [X,Z],Y \right> - \left< [Y,Z],X \right>.
    \end{multline*}
\end{proof}
In terms of the connection matrix, the above proposition states that if \((\omega_j^i)\) is the connection matrix on \(U\) with respect to \(\{e_1,\ldots,e_n\}\), then the connection matrix on \(CU\) with respect to \(\{\partial_r,\frac{e_1}{r},\ldots,\frac{e_n}{r}\}\) is
\begin{equation}\label{eqn:connection matrix of cone}
    \begin{pmatrix}
        0 & \begin{matrix} -e_1^* & \ldots & -e_n^*\end{matrix}\\
        \begin{matrix}
            e_1^*\\
            \vdots\\
            e_n^*
        \end{matrix} & \omega_j^i
    \end{pmatrix}
\end{equation}
where \(\{e_i^*\}\) are the differential forms for which \(e_i^*(e_j)=\delta_{ij}\).

For each \(r\), the hypersurface \(\Sigma_r=\{r\}\times L\) in \(CL\) has unit normal \(\partial_r\).
The cone metric restricted to \(\Sigma_r\) is \(r^2g_L\), in particular, it is a scalar multiple of \(g_L\). The Levi-Civita connection is invariant under scalar multiples, so \(\conn[\Sigma_r]=\conn[L]\).
Thus, the (3,1)-curvature tensors are equal, \(\curvtens[\Sigma_r]=\curvtens[L]\).
This, combined with the Gauss equation, allows us to compute the curvature of \(CL\).
Recall that the Gauss equation states that for \(X,Y,Z,W\in T\Sigma_r=TL\),
\begin{multline}\label{Gauss equation}
    \curvtens[CL](X,Y,Z,W) = \curvtens[\Sigma_r](X,Y,Z,W) + \left< \mathbf{A}(X,Z), \mathbf{A}(Y,W) \right>_{CL}\\ - \left< \mathbf{A}(X,W), \mathbf{A}(Y,Z) \right>_{CL},
\end{multline}
where \(\mathbf{A}(X,Y)\coloneqq\left< \nabla_X^{CL}Y, \partial_r \right>_{CL}\partial_r\) for \(X,Y\in T\Sigma_r\).
Now,
\begin{align*}
    \left< \nabla_X^{CL}Y,\partial_r \right>_{CL} &= -\left< Y,\nabla_X^{CL}\partial_r \right>_{CL}\\
    &= -\frac{1}{r} \left< Y,X \right> _{CL}\\
    &= -r\left< X,Y \right>_L.
\end{align*}
At the same time, if \(\curvtens[\Sigma_r]=\curvtens[L]\) as (3,1)-tensors, then as (4,0)-tensors
\begin{equation*}
    \curvtens[\Sigma_r](X,Y,Z,W)=r^2\curvtens[L](X,Y,Z,W).
\end{equation*}
Thus, Equation \eqref{Gauss equation} becomes
\begin{multline*}
    \curvtens[CL](X,Y,Z,W) = r^2\curvtens[L](X,Y,Z,W) + r^2\left< X,Z \right>_L\left< Y,W \right>_L \\
    - r^2\left< X,W \right>_L\left< Y,Z \right>_L.
\end{multline*}
Since $\left< X\wedge Y, Z\wedge W \right>_L = \begin{vmatrix}
    \left< X,Z \right>_L & \left< X,W \right>_L \\
    \left< Y,Z \right>_L & \left< Y,W \right>_L
\end{vmatrix}$, the last equation can be rewritten as
\begin{align*}
    \left< \curvoper[CL](X\wedge Y),Z\wedge W \right>_{CL} &= r^2\left< \curvoper[L](X\wedge Y) - X \wedge Y, Z\wedge W \right>_L \\
    &= \frac{1}{r^2}\left< \curvoper[L](X \wedge Y) - X \wedge Y, Z \wedge W \right>_{CL}.
\end{align*}

It is also not hard to compute, using Proposition \ref{prop:covariant derivative on cone}, that \(\curvoper[CL](\partial_r\wedge X)=0\) for \(X \in T\Sigma_r\).
We have shown
\begin{proposition}\label{prop:curvature of cone}
    Let \((L^{\tdim-1},g)\) be a Riemannian manifold and \((CL^{\tdim},g_{CL})\) be its cone.
    Let \(\curvoper[L]\) and \(\curvoper[CL]\) denote their curvature operators.
    Then
    \begin{align*}
        \curvoper[CL]|_{\bigwedge^2 TL} &= \frac{1}{r^2} \left(\curvoper[L] - Id|_{\bigwedge^2 TL}\right) \\
        \curvoper[CL](\partial_r\wedge X)&=0 \text{ for } X\in TL
    \end{align*}
\end{proposition}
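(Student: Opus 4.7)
The plan is to prove the two identities separately by direct computation, each built on the tools already established: Proposition \ref{prop:covariant derivative on cone} and the connection matrix in equation \eqref{eqn:connection matrix of cone}.

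For the first identity, the natural approach is to apply the Gauss equation to the hypersurface $\Sigma_r = \{r\} \times L \subset CL$ with unit normal $\partial_r$. Two simple observations make this feasible. First, the metric induced on $\Sigma_r$ is $r^2 g_L$, a constant rescaling of $g_L$; the Levi-Civita connection is invariant under such rescaling, so $\conn[\Sigma_r] = \conn[L]$ and consequently the $(3,1)$-curvature tensors agree, $\curvtens[\Sigma_r] = \curvtens[L]$ (which produces the factor $r^{-2}$ when passing to the cone metric on $\bigwedge^2 TL$). Second, Proposition \ref{prop:covariant derivative on cone} immediately gives $\conn[CL]_X \partial_r = X/r$ for $X \in T\Sigma_r$, so the scalar second fundamental form of $\Sigma_r$ is $-r\langle \cdot, \cdot\rangle_L$. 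Substituting into the Gauss equation and rewriting via the determinant identity $\langle X \wedge Y, Z \wedge W\rangle_L = \langle X, Z\rangle_L \langle Y, W\rangle_L - \langle X, W\rangle_L \langle Y, Z\rangle_L$ yields the claimed formula on $\bigwedge^2 TL$.

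For the second identity, my preferred approach is to use the connection matrix of equation \eqref{eqn:connection matrix of cone} and compute the curvature 2-form entry $\Omega^{0}{}_{j} = d\omega^{0}{}_{j} + \omega^{0}{}_{c} \wedge \omega^{c}{}_{j}$. Substituting $\omega^{0}{}_{j} = -e_j^*$ and $\omega^{i}{}_{j} = \omega^{L,i}{}_{j}$, and invoking the first structure equation on $L$ together with the antisymmetry $\omega^{L,j}{}_{k} = -\omega^{L,k}{}_{j}$, a short manipulation shows that the two resulting terms cancel and $\Omega^{0}{}_{j} = 0$. This says $\langle \curvtens[CL]_{Y, Z} \bar{e}_j, \partial_r\rangle = 0$ for all tangent vectors $Y, Z$ and all link directions $\bar{e}_j = e_j/r$. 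Combined with the trivial identity $\langle \curvtens[CL]_{Y, Z} \partial_r, \partial_r\rangle = 0$ and the pair-swap symmetry of the Riemann tensor, this upgrades to $\curvtens[CL]_{Y, Z} \partial_r = 0$ for all $Y, Z$, which is precisely the statement $\curvoper[CL](\partial_r \wedge X) = 0$ for $X \in TL$.

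The only bookkeeping subtlety is keeping careful track of the rescaled orthonormal frame $\{\partial_r, e_i/r\}$ versus the pulled-back link frame $\{e_i\}$ and the corresponding coframes; once the $1/r$ factors are correctly absorbed, both identities follow by routine manipulations from Proposition \ref{prop:covariant derivative on cone} and the first structure equation for $L$. I do not expect any genuine obstacle beyond this. As a sanity check one may alternatively recognize $CL$ as the warped product $(0, \infty) \times_r L$ with warping function $f(r) = r$ satisfying $f'' = 0$, which makes the vanishing of the radial components of curvature transparent from the standard warped-product formulas.
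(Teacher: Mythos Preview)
Your proposal is correct and follows essentially the same route as the paper: the Gauss equation argument for the first identity is identical to the paper's, and for the second identity the paper simply says ``not hard to compute using Proposition \ref{prop:covariant derivative on cone}'' while you make this explicit via the curvature 2-form $\Omega^{0}{}_{j}$ built from the connection matrix \eqref{eqn:connection matrix of cone}, which is just a repackaging of that proposition. Your warped-product sanity check is a nice addition but not needed.
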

\begin{corollary}\label{cor:scalar curvature of cone}
    The scalar curvature of \((CL^{\tdim},g_{CL})\) is given by
    \begin{equation*}
        \Sc_{CL} = \frac{\Sc_L-(n-1)(n-2)}{r^2}.
    \end{equation*}
\end{corollary}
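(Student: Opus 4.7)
The plan is to compute the scalar curvature directly from Proposition \ref{prop:curvature of cone} using the formula $\Sc = 2\,\trac \curvoper$, with careful bookkeeping of the metric scalings that separate $\bigwedge^2 TL$ equipped with $g_L$ from $\bigwedge^2 TL$ viewed as a subspace of $\bigwedge^2 T(CL)$ with $g_{CL}$.

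First, I would fix a local orthonormal frame $\{e_1,\dots,e_{n-1}\}$ on $L$. By the discussion preceding Proposition \ref{prop:covariant derivative on cone}, the frame $\{\partial_r, e_1/r, \dots, e_{n-1}/r\}$ is orthonormal on $CL$, so
\[
\left\{\partial_r \wedge \tfrac{e_i}{r}\right\}_{1\leq i \leq n-1} \cup \left\{\tfrac{e_i}{r}\wedge \tfrac{e_j}{r}\right\}_{1\leq i<j\leq n-1}
\]
is an orthonormal basis of $\bigwedge^2 T(CL)$. The terms $\partial_r \wedge \tfrac{e_i}{r}$ contribute nothing to $\trac\curvoper[CL]$ by the second identity of Proposition \ref{prop:curvature of cone}.

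Next, I would unpack the first identity of Proposition \ref{prop:curvature of cone} on the basis element $\tfrac{e_i}{r}\wedge \tfrac{e_j}{r}$. Bilinearity of $\curvoper[L]$ pulls out a factor $1/r^2$, while the identity $\langle \cdot,\cdot\rangle_{CL}=r^2\langle\cdot,\cdot\rangle_L$ on $TL$ pulls out a compensating $r^4$ when computing $\langle \curvoper[L](e_i\wedge e_j), e_i \wedge e_j\rangle_{CL}$. The two factors combine so that, after summing over $i<j$,
\[
\sum_{i<j}\langle \curvoper[CL](\tfrac{e_i}{r}\wedge \tfrac{e_j}{r}),\tfrac{e_i}{r}\wedge \tfrac{e_j}{r}\rangle_{CL}
= \frac{1}{r^2}\Big(\trac\curvoper[L] - \tbinom{n-1}{2}\Big),
\]
since the identity term contributes $\binom{n-1}{2}$, the number of basis pairs in $\bigwedge^2 TL$.

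Finally, doubling and invoking $\Sc_L = 2\,\trac\curvoper[L]$ and $2\binom{n-1}{2}=(n-1)(n-2)$ gives the claimed formula. There is no real obstacle here; the only subtlety worth flagging is the distinction between the two inner products on $\bigwedge^2 TL$, which is precisely what produces the $1/r^2$ scaling in the final expression.
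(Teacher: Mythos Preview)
Your proof is correct and is the natural way to derive the corollary from Proposition~\ref{prop:curvature of cone}; the paper states the result without an explicit argument, and your trace computation is exactly what is implicitly intended. One minor remark: the factor bookkeeping can be streamlined by noting that the trace of the operator $\frac{1}{r^2}(\curvoper[L]-\mathrm{Id})$ on $\bigwedge^2 TL$ is basis-independent, so the $1/r^2$ in the final formula comes directly from the coefficient in the proposition and the inner-product rescalings simply cancel---but your conclusion is the same either way.
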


\subsection{Geometry of a family of asymptotic cones}\label{subsec:geometry of family of asymptotic cones}
In this section, we study the geometry of a family of cones parametrized by a base space, having asymptotically conical metrics. The setup is as follows: There exists a Riemannian manifold \((B^{\hdim},g_B)\), representing the base, and a manifold \(L^{\ldim}\), representing the link.
The space in question is the direct product \(M^\tdim = B^\hdim \times (0, \infty) \times L^\ldim\).
Let \(g_L(b)\) be a family of metrics on the link varying smoothly over the base.
Defining \(\bund{L} \coloneqq B \times L\) with \(g_{\bund{L}} = \pi^*g_B \oplus g_L(b)\) yields a Riemannian link bundle over \(B\).
We write \(\cone{\bund{L}}\) to denote the fiberwise coning of \(\bund{L}\).
Namely, \(\cone{\bund{L}} = B^\hdim \times (0, \infty) \times L^\ldim\) with metric
\begin{equation*}
    g_{\bund{L}} = \pi^*g_B \oplus dr^2 \oplus r^2 g_L(b).
\end{equation*}
The metric on \(M\) is then assumed to differ from the metric on \(C\bund{L}\) by \(o_2(r^2)\) error terms.
Thus, it is of the form
\begin{equation*}
    g_M = dr^2 + r^2 g_{\bund{L}} + o_2(r^2).
\end{equation*}

The next proposition relates the scalar curvature of \(M\) to the scalar curvature of \(\cone{\bund{L}}\).
\begin{proposition}\label{prop:asymptotic scalar curvature estimate for family of cones}
    \begin{equation*}
        \Sc_M = \Sc_{\cone{\bund{L}}} + o(1/r^2)
    \end{equation*}
\end{proposition}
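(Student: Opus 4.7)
The plan is to work locally near a singular stratum point, choose an orthonormal frame adapted to the cone metric $g_{\cone{\bund{L}}}$, and expand the scalar curvature of $g_M$ around that of $g_{\cone{\bund{L}}}$ while carefully tracking the $r$-order of every correction term. Set $h := g_M - g_{\cone{\bund{L}}}$, which by hypothesis has coordinate components satisfying $h_{jk} = o(r^2)$, $\partial h_{jk} = o(r)$, and $\partial^2 h_{jk} = o(1)$. Fix a local orthonormal frame $\{\tilde E_\alpha\}$ on the base and $\{e_i\}$ on the link, and form the adapted orthonormal frame $\{\tilde E_\alpha,\, \partial_r,\, e_i/r\}$ for $g_{\cone{\bund{L}}}$; in this frame the cone metric is the identity, and its Christoffel symbols are as recorded in Proposition \ref{prop:covariant derivative on cone} (together with the base contributions, which are $r$-independent).

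The first key step is to translate the decay assumption into covariant-derivative bounds in the adapted frame. A direct computation, using the $1/r$ rescaling of the link frame vectors together with the cone Christoffel symbols, gives
\begin{equation*}
    h = o(1), \qquad \nabla^{\cone{\bund{L}}} h = o(1/r), \qquad (\nabla^{\cone{\bund{L}}})^2 h = o(1/r^2),
\end{equation*}
all measured in the orthonormal frame, together with $g_M^{-1} - g_{\cone{\bund{L}}}^{-1} = o(1)$. With these in hand, the Koszul formula writes $\nabla^{g_M} = \nabla^{\cone{\bund{L}}} + T$ with a $(1,2)$-tensor $T$ depending linearly on $\nabla^{\cone{\bund{L}}} h$ through $g_M^{-1}$, so that $|T| = o(1/r)$ and $|\nabla^{\cone{\bund{L}}} T| = o(1/r^2)$. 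The Riemann tensor difference then reads $R^{g_M} = R^{\cone{\bund{L}}} + \nabla^{\cone{\bund{L}}} T + T \cdot T$, which is $o(1/r^2)$ in the frame. Finally, contracting with $g_M^{-1} = I + o(1)$ and using that $R^{\cone{\bund{L}}}$ is itself $O(1/r^2)$ by Proposition \ref{prop:curvature of cone}, so that the cross term $o(1)\cdot R^{\cone{\bund{L}}} = o(1/r^2)$, one obtains $\Sc_M - \Sc_{\cone{\bund{L}}} = o(1/r^2)$, as required.

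The main obstacle will be the scaling bookkeeping in the translation step: verifying that the coordinate-wise bound $h = o_2(r^2)$ really does produce the stated covariant-derivative estimates in the cone's orthonormal frame, paying careful attention to the interplay between the $1/r$ scaling of the link frame and the cone Christoffel symbols of Proposition \ref{prop:covariant derivative on cone}. Once this has been settled, the remaining expansion is a routine application of the perturbation formulas for Riemannian curvature, and the desired $o(1/r^2)$ estimate follows by direct power counting in the frame.
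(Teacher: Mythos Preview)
Your approach is correct and reaches the same conclusion, but it differs from the paper's method. The paper does not use the difference-of-connections tensor $T$ and the perturbation formula $R^{g_M}=R^{\cone{\bund{L}}}+\nabla^{\cone{\bund{L}}}T+T\ast T$. Instead, it applies Gram--Schmidt to the cone-orthonormal frame $\{b_i,\partial_r,e_j/r\}$ to produce a $g_M$-orthonormal frame $\{\dom{b}_i,\dom{\partial}_r,\dom{e}_j/r\}$, records the transition between the two frames (each new vector equals the old one plus lower-order corrections), and then compares the structure constants $c_{ij}^k=\langle[v_i,v_j],v_k\rangle$ and connection coefficients $\omega_{ij}^k$ of the two frames directly, obtaining $\dom{\omega}_{ij}^k=\omega_{ij}^k+o_1(1/r)$. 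Plugging this into the nonholonomic scalar curvature formula gives the result. Your route packages the same bookkeeping into the tensor $T$ and its covariant derivative, which is arguably cleaner conceptually; the paper's route is more explicit and, as Remark~\ref{rmk:asymptotic sectional curvature for family of cones} notes, yields the componentwise sectional curvature estimate along the way with no extra work. One small point to tighten in your write-up: Proposition~\ref{prop:curvature of cone} only treats the cone over a single link, so your claim that $R^{\cone{\bund{L}}}=O(1/r^2)$ also needs the (easy) observation that the base--base and base--link curvature components are $O(1)$ in the orthonormal frame, which is trivially $O(1/r^2)$ as $r\to 0$.
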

\begin{proof}
    Let \(g\) denote the metric on \(\cone{\bund{L}}\) and \(h\) the metric on \(M\).
    Take an orthonormal frame \(\{b_1, \ldots, b_{\hdim}, \partial_r, \frac{e_1}{r}, \ldots, \frac{e_{\ldim}}{r}\} = \{\{b_i\}, \partial_r, \{e_i\}\}\) with respect to \(g\).
    Here, the \(\{e_i\}\) are taken to be in \(TL\) and the \(\{b_i\}\) are lifted from the base.

    By assumption
    \begin{align*}
        \inner{b_i, b_j}_h &= \delta_{ij} + o_2(r^2) &\inner{b_i, \partial_r}_h &= o_2(r^2) & \inner{b_i, \frac{e_i}{r}}_h &= o_2(r) \\
        \inner{\partial_r, b_j}_h &= o_2(r^2) & \inner{\partial_r, \partial_r}_h &= 1 + o_2(r^2) & \inner{\partial_r, \frac{e_i}{r}}_h &= o_2(r) \\
        \inner{\frac{e_i}{r}, b_j}_h &= o_2(r) & \inner{\frac{e_i}{r}, \partial_r}_h &= o_2(r) & \inner{\frac{e_i}{r}, \frac{e_j}{r}}_h &= \delta_{ij} + o_2(1)
    \end{align*}
    Applying Gram-Schmidt orthogonalization to \(\{\{b_i\}, \partial_r, \{e_i\}\}\) with respect to the metric \(h\), we obtain an orthonormal frame \(\{\{\dom{b}_i\}, \dom{\partial}_r, \{\frac{\dom{e}_i}{r}\}\}\).
    Following the Gram-Schmidt algorithm, it is not hard to see that
    \begin{align}
        \label{eqn:new from original - base} \dom{b}_i &= b_i + o_2(r^2)\{b_j\} \\
        \label{eqn:new from original - radial} \dom{\partial_r} &= \partial_r + o_2(r^2) \partial_r + o_2(r^2)\{b_j\} \\
        \label{eqn:new from original - link} \frac{\dom{e}_i}{r} &= \frac{e_i}{r} + o_2(1) \{\frac{e_j}{r}\} + o_2(r) \partial_r + o_2(r) \{b_j\}
    \end{align}
    The converse holds too, namely
    \begin{align}
        \label{eqn:original from new - base} b_i &= \dom{b}_i + o_2(r^2)\{\dom{b}_j\} \\
        \label{eqn:original from new - radial}\partial_r &= \dom{\partial}_r + o_2(r^2) \dom{\partial}_r + o_2(r^2)\{\dom{b}_j\} \\
        \label{eqn:original from new - link} \frac{e_i}{r} &= \frac{\dom{e}_i}{r} + o_2(1) \{\frac{\dom{e}_j}{r}\} + o_2(r) \dom{\partial}_r + o_2(r) \{\dom{b}_j\}
    \end{align}

    Let \(\{v_i\}\) label the elements of \(\{\{b_i\}, \partial_r, \{e_i\}\}\) and \(\{\dom{v}_i\}\) label the elements of \(\{\{\dom{b}_i\}, \dom{\partial}_r, \{\frac{\dom{e}_i}{r}\}\}\).
    Let \(\omega_{ij}^k\) denote \(\inner{\conn[g]_{v_i} v_j, v_k}_g\) and \(\dom{\omega}_{ij}^k\) denote \(\inner{\conn[h]_{\dom{v}_i} \dom{v}_j, \dom{v}_k}_h\).
    Define \(c_{ij}^k = \inner{[v_i, v_j], v_k}_g\) and \(\dom{c}_{ij}^k = \inner{[\dom{v}_i, \dom{v}_j], \dom{v}_k}_h\).
    Applying \eqref{eqn:new from original - base}-\eqref{eqn:new from original - link} and \eqref{eqn:original from new - base}-\eqref{eqn:original from new - link} we see that, at worst,
    \begin{equation}\label{eqn:new lie bracket coefficient}
        \dom{c}_{ij}^k = c_{ij}^k + o_1(1/r)
    \end{equation}
    From the Koszul formula
    \begin{align*}
        \omega_{ij}^k &= \frac{1}{2} (c_{ij}^k - c_{ik}^j - c_{jk}^i) \\
        \dom{\omega}_{ij}^k &= \frac{1}{2} (\dom{c}_{ij}^k - \dom{c}_{ik}^j - \dom{c}_{jk}^i)
    \end{align*}
    and hence, at worst
    \begin{equation}\label{eqn:new connection matrix coefficient}
        \dom{\omega}_{ij}^k = \omega_{ij}^k + o_1(1/r)
    \end{equation}

    The scalar curvature formula, written in terms of indices associated to a nonholonomic orthonormal frame, is given by
    \begin{align*}
        \Sc_{ C\bund{L} } &= \sum_j \left[ v_i (\omega_{j j}^i) - v_j (\omega_{i j}^i) + \omega_{j j}^k \omega_{i k}^i - \omega_{i j}^k \omega_{j k}^i - c_{i j}^k \omega_{k j}^i \right] \\
        \Sc_{M} &= \sum_j \left[ \dom{v}_i (\hat{\omega}_{j j}^i) - \dom{v}_j (\hat{\omega}_{i j}^i) + \hat{\omega}_{j j}^k \hat{\omega}_{i k}^i - \hat{\omega}_{i j}^k \hat{\omega}_{j k}^i - \hat{c}_{i j}^k \hat{\omega}_{k j}^i \right]
    \end{align*}
    where \(v(\omega_{ij}^k)\) denotes differentiation of the real-valued function \(\omega_{ij}^k\) in the direction of \(v\).
    Combining the above with Equations \eqref{eqn:new lie bracket coefficient} and \eqref{eqn:new connection matrix coefficient} implies the conclusion.
\end{proof}

\begin{remark}\label{rmk:asymptotic sectional curvature for family of cones}
    The above proof shows that the error terms in the sectional curvatures are all \(o(1/r^2)\).
\end{remark}

\subsection{Twisted Dirac operator on a cone}\label{subsec:twisted dirac operator on cone}
Let \((\dom{L}^{\tdim-1}, \dom{g})\) and \((L^{\tdim-1}, g)\) be closed spin Riemannian manifolds of the same dimension and suppose \(F \colon \cone{\dom{L}} \to \cone{L}\) is a smooth map of the form \(F(r,x) = (\varphi(r), f(x))\). Define \(V = T\cone{\dom{L}} \oplus F^*T\cone{L}\). We want to relate the Dirac operator on \(SV\) to the Dirac operator on \(SV|_{\dom{L}}\). To do so, we will construct a unitary \(\Psi \colon L^2((0, \infty), SV|_{\dom{L}}) \to L^2(SV)\).

Before constructing \(\Psi\), let us establish some notation.
First, notice that the spinor bundle \(SV\) is a \(\Cl(T\cone{\dom{L}} \oplus F^*T\cone{L})\)-module.
The Clifford action by vectors \(\dom{w}\) coming from \(T\cone{\dom{L}}\) will be denoted by \(\dom{c}(\dom{w})\) and the Clifford action by vectors \(w\) coming from \(F^*T\cone{L}\) will be denoted by \(c(w)\). Second, given \(\sigma_x\) in \((SV|_{\dom{L}})_x\), let \(\overline{\sigma_x}\) denote the parallel transport of \(\sigma_x\) along the ray \((0, \infty) \times \{x\}\).
This parallel transport satisfies the following properties.
\begin{lemma}
    If \(X \in T\dom{L}\) and \(Y \in f^*TL\), then
    \begin{gather}
        \label{eqn:clifford action pulls out of parallel transport - vector from domain} \overline{\dom{c}(X) \sigma} = \dom{c}(\overline{X}) \overline{\sigma} \\
        \label{eqn:clifford action pulls out of parallel transport - vector from target} \overline{c(Y) \sigma} = c(\overline{Y}) \overline{\sigma} \\
        \label{eqn:derivative of parallel transport} \conn[]_X \overline{\sigma} = \overline{\conn[]_X \sigma}
    \end{gather}
    Here, \(\overline{X}\) and \(\overline{Y}\) are the parallel transports of \(X\) and \(Y\) in the radial direction.
\end{lemma}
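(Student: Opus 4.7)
The plan is to verify the three identities by exploiting that the spinor connection on $SV$ is compatible with Clifford multiplication, together with the vanishing of the radial curvature on a cone (Proposition \ref{prop:curvature of cone}).

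For (\ref{eqn:clifford action pulls out of parallel transport - vector from domain}) and (\ref{eqn:clifford action pulls out of parallel transport - vector from target}), first recall that any Clifford connection satisfies
\[
  \conn[]_{\partial_r}\bigl(\dom{c}(\overline{X})\,\overline{\sigma}\bigr)
  = \dom{c}\bigl(\conn[]_{\partial_r}\overline{X}\bigr)\overline{\sigma}
  + \dom{c}(\overline{X})\,\conn[]_{\partial_r}\overline{\sigma},
\]
and the analogous identity for $c(\overline{Y})$ with $Y \in f^*TL$. By construction $\overline{X}$, $\overline{Y}$, and $\overline{\sigma}$ are all parallel along $\partial_r$, so both right-hand sides vanish. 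Hence $\dom{c}(\overline{X})\overline{\sigma}$ and $c(\overline{Y})\overline{\sigma}$ are parallel sections along the ray through $x$. Since they agree with $\dom{c}(X)\sigma$ and $c(Y)\sigma$ respectively at the starting point $(1,x)$, they must equal the radial parallel transports $\overline{\dom{c}(X)\sigma}$ and $\overline{c(Y)\sigma}$.

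For (\ref{eqn:derivative of parallel transport}), extend $X \in T\dom{L}_x$ to a vector field on $\cone{\dom{L}}$ that is invariant under radial translations, so that $[\partial_r, X] = 0$. Then
\[
  \conn[]_{\partial_r}\conn[]_X \overline{\sigma}
  = \conn[]_X \conn[]_{\partial_r}\overline{\sigma}
  + \conn[]_{[\partial_r, X]}\overline{\sigma}
  + \curvtens[SV](\partial_r, X)\overline{\sigma}
  = \curvtens[SV](\partial_r, X)\overline{\sigma}.
\]
The spinor curvature $\curvtens[SV]$ is a sum of contributions from the connections on $T\cone{\dom{L}}$ and on $F^*T\cone{L}$. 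By Proposition \ref{prop:curvature of cone} the $T\cone{\dom{L}}$-curvature vanishes on $\partial_r \wedge X$. For the pulled-back factor, the product form $F(r,x) = (\varphi(r), f(x))$ gives $F_*\partial_r = \varphi'(r)\partial_r$ and $F_*X = f_*X \in TL$, so $F^*\curvtens[\cone{L}](\partial_r, X) = \varphi'(r)\,\curvtens[\cone{L}](\partial_r, f_*X)$, which vanishes again by Proposition \ref{prop:curvature of cone}. Thus $\conn[]_X\overline{\sigma}$ is parallel along $\partial_r$, and since it equals $\conn[]_X \sigma$ at the basepoint, it coincides with $\overline{\conn[]_X \sigma}$.

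The only non-routine step is ensuring the curvature vanishing extends to the twisted bundle; this is precisely where the hypothesis that $F$ respects the conical product structure is used, and it is the reason the lemma holds for the asymptotically canonical maps considered later. Everything else is a direct consequence of the compatibility of Clifford multiplication with the spinor connection.
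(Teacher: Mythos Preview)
Your argument is correct. For (\ref{eqn:clifford action pulls out of parallel transport - vector from domain}) and (\ref{eqn:clifford action pulls out of parallel transport - vector from target}) you give essentially the paper's argument, just spelled out in slightly more detail. For (\ref{eqn:derivative of parallel transport}) you take a genuinely different route: you extend $X$ radially so that $[\partial_r,X]=0$, reduce $\nabla_{\partial_r}\nabla_X\overline{\sigma}$ to the spinor curvature $\curvtens[SV](\partial_r,X)\overline{\sigma}$, and then kill this curvature term using Proposition~\ref{prop:curvature of cone} on both the $T\cone{\dom{L}}$ factor and, via the product form of $F$, on the pulled-back $F^*T\cone{L}$ factor. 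The paper instead writes down an explicit orthonormal frame $\big\{\partial_r, \{\frac{\dom{e}_i}{r}\}, \varphi^*\partial_r, \{\frac{f^*e_i}{\varphi(r)}\}\big\}$ on $V$, lifts it to a spinor frame $(\sigma_1,\dots,\sigma_N)$ which it observes is radially parallel, and then checks the identity directly using the fact that the connection coefficients $\dom{\omega}$, $\omega$ are constant in $r$ (see \eqref{eqn:connection matrix of cone}). Your approach is more conceptual and isolates exactly the geometric input needed, namely the vanishing of the radial curvature; the paper's explicit frame, on the other hand, is reused verbatim in the proofs of Propositions~\ref{prop:dirac operator under unitary transformation} and~\ref{prop:dirac operator under unitary transformation with grading}, so the extra setup is not wasted. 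Either argument is fine.
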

\begin{proof}
    Equations \eqref{eqn:clifford action pulls out of parallel transport - vector from domain} and \eqref{eqn:clifford action pulls out of parallel transport - vector from target} are an immediate consequence of the compatibility of the spinor connection with the Clifford action.

    To prove Equation \eqref{eqn:derivative of parallel transport}, take an orthonormal frame \(\big\{\partial_r, \{\frac{\dom{e}_i}{r}\}, \varphi^*\partial_r, \{\frac{f^*e_i}{\varphi(r)}\}\big\}\) on \(V\).
    The associated connection matrix takes the form
    \begin{multline*}
        \omega_V = \sum_i \dom{\omega}_{\partial_r}^i \partial_r \wedge \frac{\dom{e}_i}{r} + \sum_{i,j} \dom{\omega}_j^i \frac{\dom{e}_j}{r} \wedge \frac{\dom{e}_i}{r} + \sum_i (F^*\omega_{\partial_r}^i)\, \varphi^*\partial_r \wedge \frac{f^*e_i}{\varphi(r)} \\
        + \sum_{i,j} (F^*\omega_j^i)\, \frac{f^*e_j}{\varphi(r)} \wedge \frac{f^*e_i}{\varphi(r)}
    \end{multline*}
    In the above, \(X \wedge Y\) is the endomorphism defined by \((X \wedge Y)(w) = \inner{X, w} Y - \inner{Y,w} X\).

    Lifting the frame and connection to the spinor bundle, one gets an orthonormal spinor frame \((\sigma_1, \ldots, \sigma_N)\) for which
    \begin{multline*}
        \nabla \sigma_k = \sum_i \frac{1}{2}\dom{\omega}_{\partial_r}^i \cbar{\partial_r} \cbar{\frac{\dom{e}_i}{r}} \sigma_k + \sum_{i,j} \frac{1}{2}\dom{\omega}_j^i \cbar{\frac{\dom{e}_j}{r}} \cbar{\frac{\dom{e}_i}{r}} \sigma_k \\
        + \sum_i \frac{1}{2} (F^*\omega_{\partial_r}^i) c(\varphi^* \partial_r) c\Big(\frac{f^*e_i}{\varphi(r)}\Big) \sigma_k + \sum_{i,j} \frac{1}{2} (F^*\omega_j^i) c\Big(\frac{f^*e_j}{\varphi(r)}\Big) c\Big(\frac{f^*e_i}{\varphi(r)}\Big) \sigma_k.
    \end{multline*}
    Observe that the frame \(\{\sigma_i\}\) is parallel with respect to the radial direction, so actually \((\sigma_1, \ldots, \sigma_N) = (\overline{\sigma}_1, \ldots, \overline{\sigma}_N)\).

    Thus,
    \begin{align*}
        \conn[]_X \overline{\sigma}_k &= \sum_i \frac{1}{2}\dom{\omega}_{\partial_r}^i(X) \cbar{\partial_r} \cbar{\frac{\dom{e}_i}{r}} \overline{\sigma}_k + \sum_{i,j} \frac{1}{2}\dom{\omega}_j^i(X) \cbar{\frac{\dom{e}_j}{r}} \cbar{\frac{\dom{e}_i}{r}} \overline{\sigma}_k \\
        &+ \sum_i \frac{1}{2} \omega_{\partial_r}^i(f_*X) c(\varphi^* \partial_r) c\Big(\frac{f^*e_i}{\varphi(r)}\Big) \overline{\sigma}_k + \sum_{i,j} \frac{1}{2} \omega_j^i(f_*X) c\Big(\frac{f^*e_j}{\varphi(r)}\Big) c\Big(\frac{f^*e_i}{\varphi(r)}\Big) \overline{\sigma}_k\\
        &= \overline{\sum_i \frac{1}{2} \dom{\omega}_{\partial_r}^i(X) \cbar{\partial_r} \cbar{\dom{e}_i} \sigma_k + \sum_{i,j} \frac{1}{2}\dom{\omega}_j^i(X) \cbar{\dom{e}_j} \cbar{\dom{e}_i} \sigma_k} \\
        &+ \overline{\sum_i \frac{1}{2} \omega_{\partial_r}^i(f_*X) c(\varphi^* \partial_r) c\Big(\frac{f^*e_i}{\varphi(1)}\Big) \sigma_k + \sum_{i,j} \frac{1}{2} \omega_j^i(f_*X) c\Big(\frac{f^*e_j}{\varphi(1)}\Big) c\Big(\frac{f^*e_i}{\varphi(1)}\Big) \sigma_k}\\
        &= \overline{\conn[]_X \sigma_k},
    \end{align*}
    where in the above we have used that the connection matrices \(\dom{\omega}\) and \(\omega\) are constant with respect to the radial direction \(r\) (see \eqref{eqn:connection matrix of cone}).

    The general case where \(\sigma\) in \(SV|_{\dom{L}}\) is a linear combination of the \(\{\sigma_i\}\) follows from the above.
\end{proof}

The unitary \(\Psi\) is constructed as follows.
Given \(\sigma(r)\) in \(C^{\infty}((0,\infty), SV|_{\dom{L}})\), define \((\Psi \sigma)(r) = r^{-\frac{n-1}{2}}\overline{\sigma(r)}(r)\), where recall that \(n-1\) is the dimension of the link.
In words, \((\Psi \sigma)(r)\) is the parallel transport of \(\sigma(r)\) to the \(r\)-link, \(\{r\} \times \dom{L}\), multiplied by \(r^{-\frac{n-1}{2}}\).
The \(r^{-\frac{n-1}{2}}\) term ensures \(\Psi\) is a unitary.
To see this, we compute
\begin{align*}
    \norm{\Psi \sigma}_{L^2}^2 &= \int_{\dom{L}} \int_{0}^{\infty} \pnorm{r^{-\frac{n-1}{2}} \overline{\sigma}}^2 r^{n-1} dr\\
    &= \int_{\dom{L}} \int_{0}^{\infty} \pnorm{\sigma}^2 dr\\
    &= \norm{\sigma}_{L^2}^2.
\end{align*}
Hence, \(\Psi\) extends to a unitary \(L^2((0, \infty), SV|_{\dom{L}}) \to L^2(SV)\).

The next step is to explicitly determine the form of the Dirac operator on \(SV\) when viewed as an unbounded operator acting on \(L^2((0, \infty), SV|_{\dom{L}})\).
To this end, start by defining
\begin{equation*}
    T_X^{\partial, 0} \sigma = \frac{1}{2} \sum_i \dom{\omega}_{\partial_r}^i(X) \cbar{\partial_r} \cbar{\frac{\dom{e}_i}{r}} \sigma
\end{equation*}
and consider the new connection \(\conn[\partial, 0]_X \sigma \coloneqq \conn[]_X \sigma - T_X^{\partial, 0} \sigma\).
Intuitively, we are just removing the contribution from the radial direction of \(T\cone{\dom{L}}\).
The next proposition expresses \(\Psi^{-1} D \Psi\) in terms of \(\conn[\partial,0]\).
\begin{proposition}[cf. {\cite[Proposition 5.2]{Lesch:1993}}]\label{prop:dirac operator under unitary transformation}
    Suppose \(\sigma \in C^\infty((0, \infty), SV|{\dom{L}})\) is a smooth compactly supported section vanishing near the singularity.
    Then
    \begin{equation*}
        \Psi^{-1}D\Psi \sigma = \cbar{\partial_r} \frac{d}{dr} \sigma + \frac{1}{r} \sum_i \cbar{\dom{e}_i} \conn[\partial, 0]_{\dom{e}_i} \sigma
    \end{equation*}
    where \(\{\dom{e}_i\}\) is any local orthonormal frame on \(\dom{L}\).
\end{proposition}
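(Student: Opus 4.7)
The plan is to decompose $D$ in the orthonormal frame $\{\partial_r, \tfrac{\dom e_1}{r}, \ldots, \tfrac{\dom e_{n-1}}{r}\}$ adapted to $T\cone{\dom L}$, in which $D = \cbar{\partial_r}\conn_{\partial_r} + \sum_i \cbar{\tfrac{\dom e_i}{r}}\conn_{\tfrac{\dom e_i}{r}}$, apply $D$ to $(\Psi\sigma)(r) = r^{-(n-1)/2}\overline{\sigma(r)}$, commute every operator through the radial parallel transport so that the result takes the form $r^{-(n-1)/2}\,\overline{(\cdot)}$, and then strip off this transport and rescaling by applying $\Psi^{-1}$. For the radial contribution, the defining property of radial parallel transport ($\conn_{\partial_r}\overline\tau = 0$ for fixed $\tau\in SV|_{\dom L}$, together with $\conn_{\partial_r}\partial_r = 0$ read off from~\eqref{eqn:connection matrix of cone}) combined with the Leibniz rule in $r$ yields
\[
\conn_{\partial_r}(\Psi\sigma) \;=\; -\tfrac{n-1}{2}\, r^{-(n+1)/2}\,\overline{\sigma(r)} \;+\; r^{-(n-1)/2}\,\overline{\tfrac{d\sigma}{dr}},
\]
and pulling $\cbar{\partial_r}$ through the bar via Eq.~\eqref{eqn:clifford action pulls out of parallel transport - vector from domain} gives the radial piece $r^{-(n-1)/2}\,\overline{\,\cbar{\partial_r}\tfrac{d\sigma}{dr} - \tfrac{n-1}{2r}\cbar{\partial_r}\sigma\,}$.

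For the tangential directions I would use tensoriality in the vector argument to write $\conn_{\tfrac{\dom e_i}{r}} = \tfrac{1}{r}\conn_{\dom e_i}$, apply Eq.~\eqref{eqn:derivative of parallel transport} at fixed $r$ to get $\conn_{\dom e_i}\overline{\sigma(r)} = \overline{\conn_{\dom e_i}\sigma(r)}$, and pull the Clifford action through via~\eqref{eqn:clifford action pulls out of parallel transport - vector from domain}, noting $\cbar{\overline{\dom e_i}} = \cbar{\tfrac{\dom e_i}{r}}$ (the parallel transport of the unit link vector is $\tfrac{1}{r}$ times its level-$r$ extension, as one sees from~\eqref{eqn:connection matrix of cone}). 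The tangential contribution becomes $r^{-(n-1)/2}\,\overline{\,\tfrac{1}{r}\sum_i \cbar{\dom e_i}\conn_{\dom e_i}\sigma\,}$. Adding the two pieces and applying $\Psi^{-1}$, which removes both the bar and the $r^{-(n-1)/2}$, produces
\[
\Psi^{-1}D\Psi\sigma \;=\; \cbar{\partial_r}\tfrac{d\sigma}{dr} \;-\; \tfrac{n-1}{2r}\cbar{\partial_r}\sigma \;+\; \tfrac{1}{r}\sum_i \cbar{\dom e_i}\conn_{\dom e_i}\sigma.
\]

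The final step is to absorb the correction $-\tfrac{n-1}{2r}\cbar{\partial_r}\sigma$ into the twist $\conn[\partial,0]$. Reading off the cone connection matrix~\eqref{eqn:connection matrix of cone} gives $\dom\omega_{\partial_r}^j(\dom e_i) = \delta_{ij}$, so $T^{\partial,0}_{\dom e_i} = \tfrac12\cbar{\partial_r}\cbar{\tfrac{\dom e_i}{r}}$; a short Clifford manipulation (anticommuting $\cbar{\dom e_i}$ past $\cbar{\partial_r}$ and using $\cbar{\dom e_i}^2 = -1$, summed over $i$) then yields $-\tfrac{1}{r}\sum_i \cbar{\dom e_i}T^{\partial,0}_{\dom e_i}\sigma = -\tfrac{n-1}{2r}\cbar{\partial_r}\sigma$, which converts $\conn$ into $\conn[\partial,0]$ and delivers the claimed formula. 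The main obstacle is not substance but bookkeeping: at each step one must carefully track which Clifford action (at which radius) and which connection (ambient on the cone versus restricted to $\dom L$ versus twisted by $T^{\partial,0}$) is being applied, and ensure the $r$-powers line up correctly. Eqs.~\eqref{eqn:clifford action pulls out of parallel transport - vector from domain}--\eqref{eqn:derivative of parallel transport} are precisely the compatibility tools that make this bookkeeping routine once the conventions are fixed.
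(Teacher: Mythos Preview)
Your proposal is correct and follows essentially the same route as the paper: decompose $D$ in the radial frame, compute the radial and tangential pieces using the parallel-transport compatibilities \eqref{eqn:clifford action pulls out of parallel transport - vector from domain}--\eqref{eqn:derivative of parallel transport}, and cancel the $-\tfrac{n-1}{2r}\cbar{\partial_r}$ term against the $T^{\partial,0}$ correction to pass from $\nabla$ to $\nabla^{\partial,0}$. The only cosmetic difference is that the paper first reduces to product-form sections $a(r,x)\sigma_k$ and inserts the splitting $\nabla=\nabla^{\partial,0}+T^{\partial,0}$ inside the tangential sum, whereas you work directly with general $\sigma(r)$ and perform the splitting at the end; the computations are otherwise line-for-line equivalent.
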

\begin{proof}
    To prove the statement, it is sufficient to show it for sections \(\sigma\) supported in \((0, \infty) \times U\), for any coordinate neighborhood \(U\) of the link.
    In this case, \(\sigma\) can be written as a linear combination \(\sigma = \sum_k a_k(r,x) \sigma_k\), where each \(\sigma_k\) depends only on \(x\).
    Thus, it is enough to prove the statement for sections of the form \(a(r,x) \sigma\), where \(a(r,x)\) is a smooth function and \(\sigma\) is a smooth section of \(SV|_{\dom{L}}\).
    We compute
    \begin{align*}
        D\Psi a(r,x) \sigma &= D(r^{-\frac{n-1}{2}} a(r,x) \overline{\sigma}(r))\\
        &= \cbar{\partial_r} \conn[]_{\partial_r} (r^{-\frac{n-1}{2}} a(r,x) \overline{\sigma}(r)) + \sum_i \cbar{\frac{\dom{e}_i}{r}} \conn[]_{\frac{\dom{e}_i}{r}} (r^{-\frac{n-1}{2}} a(r,x) \overline{\sigma}(r))\\
        &=\cbar{\partial_r} \left[ -\Big( \frac{n-1}{2} \Big) r^{-\frac{n-1}{2}} \frac{1}{r} a(r,x) \overline{\sigma}(r) + r^{-\frac{n-1}{2}} a'(r,x) \overline{\sigma}(r) \right] \\
        &\qquad\qquad\qquad\qquad + \sum_i r^{-\frac{n-1}{2}} \frac{1}{r} \overline{\cbar{\dom{e}_i} \conn[]_{\dom{e}_i} a(r,x) \sigma}(r)\\
        &= \cbar{\partial_r} \left[ -\Big( \frac{n-1}{2} \Big) r^{-\frac{n-1}{2}} \frac{1}{r} a(r,x) \overline{\sigma}(r) + r^{-\frac{n-1}{2}} a'(r,x) \overline{\sigma}(r) \right] \\
        &\qquad\qquad\qquad\qquad + \sum_i r^{-\frac{n-1}{2}} \frac{1}{r} \overline{\cbar{\dom{e}_i} \conn[\partial, 0]_{\dom{e}_i} a(r,x) \sigma}(r)\\
        &\qquad\qquad\qquad\qquad + \sum_{i,j} r^{-\frac{n-1}{2}} \frac{1}{r} a(r,x) \overline{\cbar{\dom{e}_i} \frac{1}{2}\dom{\omega}_{\partial_r}^j(\dom{e}_i) \cbar{\partial_r} \cbar{\dom{e}_j} \sigma}(r).
    \end{align*}
    On the other hand, \(\omega_{\partial_r}^j (\dom{e}_i) = \binner{\conn[]_{\dom{e}_i} \partial_r, \frac{\dom{e}_j}{r}} = \delta_{ij}\).
    Thus,
    \begin{align*}
        D\Psi a(r,x) \sigma &= \cbar{\partial_r} \left[ \underbrace{-\Big( \frac{n-1}{2} \Big) r^{-\frac{n-1}{2}} \frac{1}{r} a(r,x) \overline{\sigma}(r)} + r^{-\frac{n-1}{2}} a'(r,x) \overline{\sigma}(r) \right] \\
        &\qquad\qquad\qquad\qquad + \sum_i r^{-\frac{n-1}{2}} \frac{1}{r} \overline{\cbar{\dom{e}_i} \conn[\partial, 0]_{\dom{e}_i} a(r,x) \sigma}(r) \\
        &\qquad\qquad\qquad\qquad + \underbrace{\sum_i r^{-\frac{n-1}{2}} \frac{1}{r} a(r,x) \overline{\cbar{\partial_r} \sigma}(r)}.
    \end{align*}
    The underbraced terms cancel out, which implies
    \begin{equation*}
        \Psi^{-1}D\Psi a(r,x) \sigma = \cbar{\partial_r} a'(r,x) \sigma + \frac{1}{r} \sum_i \cbar{\dom{e}_i} \conn[\partial, 0]_{\dom{e}_i} a(r,x) \sigma.
    \end{equation*}
\end{proof}
\begin{remark}
    The bundle \(SV|_{\dom{L}} = S(T\cone{\dom{L}} \oplus F^*T\cone{L})|_{\dom{L}}\) can be viewed as a module over \(\Cl(T\dom{L} \oplus (F^*T\cone{L})|_{\dom{L}})\) as follows: Given \(\sigma \in SV|_{\dom{L}}\), if \(X \in T\dom{L}\), define \(\dom{c}_\partial(X) \sigma = \cbar{\partial_r} \cbar{X} \sigma\).
    If \(Y \in (F^*TCL)|_{\dom{L}}\), define \(c_\partial(Y) \sigma = \cbar{\partial_r} c(Y) \sigma\).
    Viewing \(SV|_{\dom{L}}\) as a spinor bundle associated to \(T\dom{L} \oplus (F^*T\cone{L})|_{\dom{L}}\), the induced spinor connection is precisely \(\conn[\partial, 0]\).
    In this sense, Proposition \ref{prop:dirac operator under unitary transformation} expresses the Dirac operator \(SV\) in terms of the Dirac operator on \(SV|_{\dom{L}}\).
\end{remark}

To finish the section, recall that \(\dom{L}^{\tdim-1}\) and \(L^{\tdim-1}\) have the same dimension, which means that \(V = T\cone{\dom{L}} \oplus F^*T\cone{L}\) is even-dimensional.
Hence, \(SV\) decomposes into \(SV = S^+V \oplus S^-V\). Notice also that the positive component is canonically isomorphic to the negative component by the map which takes \(\sigma \in S^+V\) to \(\cbar{\partial_r} \sigma \in S^-V\).
Taking into account the \(\mathbb{Z}_2\)-grading along with this identification, we construct a unitary \(\widetilde{\Psi}\) from \(L^2((0, \infty), S^+V|_{\dom{L}} \oplus S^+V|_{\dom{L}})\) to \(L^2(SV)\).
Proposition \ref{prop:dirac operator under unitary transformation} can be rewritten as follows:
\begin{proposition}\label{prop:dirac operator under unitary transformation with grading}
    Suppose \(\sigma\) is a smooth compactly supported section vanishing near the singularity.
    Then
    \begin{equation*}
        \widetilde{\Psi}^{-1}D\widetilde{\Psi} \sigma = \begin{pmatrix}
            0 & -1\\
            1 & 0
        \end{pmatrix} \frac{d}{dr} \sigma + \frac{1}{r}\begin{pmatrix}
            0 & \mathcal{P}\\
            \mathcal{P} & 0
        \end{pmatrix} \sigma
    \end{equation*}
    where \(\mathcal{P} = -\sum_i \cbar{\partial_r} \cbar{\dom{e}_i} \conn[\partial, 0]_{\dom{e}_i}\), with \(\{\dom{e}_i\}\) any local orthonormal frame on \(\dom{L}\).
\end{proposition}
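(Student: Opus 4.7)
The statement is essentially an algebraic repackaging of Proposition~\ref{prop:dirac operator under unitary transformation} under the $\mathbb{Z}_2$-grading identification, so the strategy is to start from the formula established there and track how the additional identification $S^-V|_{\dom{L}} \cong S^+V|_{\dom{L}}$, $\sigma \mapsto \cbar{\partial_r}\sigma$, converts each summand into the advertised $2 \times 2$ matrix form. I would define $\widetilde{\Psi}$ as $\Psi$ composed with the fiberwise isometry $(\sigma^+,\tau^+) \mapsto \sigma^+ + \cbar{\partial_r}\tau^+$, so that under the identification a section $\psi = \psi^+ + \psi^-$ of $SV|_{\dom{L}}$ corresponds to the pair $(\psi^+, -\cbar{\partial_r}\psi^-)$, using $\cbar{\partial_r}^2 = -1$.

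The crucial algebraic input is that $\cbar{\partial_r}$ commutes with the truncated spinor connection $\conn[\partial,0]_X$ for every $X \in T\dom{L}$. I would verify this directly from the definition $\conn[\partial,0]_X = \conn[]_X - T_X^{\partial,0}$: expanding $\conn[]_X(\cbar{\partial_r}\sigma) = \cbar{\conn[]_X \partial_r}\sigma + \cbar{\partial_r}\conn[]_X\sigma$, inserting $\conn[]_X\partial_r = \sum_i \dom{\omega}_{\partial_r}^i(X)\frac{\dom{e}_i}{r}$ from Proposition~\ref{prop:covariant derivative on cone}, and simplifying via $\cbar{\partial_r}\cbar{\frac{\dom{e}_i}{r}}\cbar{\partial_r} = \cbar{\frac{\dom{e}_i}{r}}$ (a consequence of Clifford anticommutation together with $\cbar{\partial_r}^2 = -1$) shows that the correction term $T_X^{\partial,0}$ combines with $\cbar{\conn[]_X \partial_r}$ to yield exactly $\cbar{\partial_r}\conn[\partial,0]_X\sigma$. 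Conceptually, this just reflects the fact that $\conn[\partial,0]$ is designed to treat $\partial_r$ as a parallel section.

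With commutativity established, the operator $A \coloneqq \sum_i \cbar{\dom{e}_i}\conn[\partial,0]_{\dom{e}_i}$ appearing in Proposition~\ref{prop:dirac operator under unitary transformation} satisfies $A\cbar{\partial_r} = -\cbar{\partial_r}A$ (since $\cbar{\dom{e}_i}$ anticommutes with $\cbar{\partial_r}$ and $\conn[\partial,0]_{\dom{e}_i}$ now commutes with it), from which $A = \cbar{\partial_r}\mathcal{P}$ with $\mathcal{P} = -\cbar{\partial_r}A$ grading-preserving. Expressing $\cbar{\partial_r}\tfrac{d}{dr}\psi$ and $\tfrac{1}{r}A\psi = \tfrac{1}{r}\cbar{\partial_r}\mathcal{P}\psi$ in components relative to the splitting $\psi = \sigma^+ + \cbar{\partial_r}\tau^+$ then gives the advertised matrices: Clifford multiplication by $\partial_r$ swaps the two summands (producing the off-diagonal structure), and the sign $-1$ in the top-right of the first matrix is forced by $\cbar{\partial_r}^2 = -1$.

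I do not anticipate any serious obstacle; the argument is essentially careful bookkeeping. The only step requiring attention is the commutativity $[\cbar{\partial_r}, \conn[\partial,0]_X] = 0$, which I would record as a short preliminary lemma before invoking Proposition~\ref{prop:dirac operator under unitary transformation}, and the tracking of signs so that the identification $\psi \leftrightarrow (\psi^+, -\cbar{\partial_r}\psi^-)$ produces the $2 \times 2$ matrices with the correct orientation.
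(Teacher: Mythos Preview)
Your proposal is correct and matches the paper's approach: the paper does not give a separate proof but presents the proposition as a direct rewriting of Proposition~\ref{prop:dirac operator under unitary transformation} under the grading identification, and your plan simply makes explicit the bookkeeping behind that rewriting. The key ingredient you single out, the commutativity \([\cbar{\partial_r},\conn[\partial,0]_X]=0\), is exactly what underlies the paper's assertion that the formula ``can be rewritten'' in the stated \(2\times 2\) form, and your sign tracking through \(\cbar{\partial_r}^2=-1\) is accurate.
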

\begin{remark}\label{rem:link operator in terms of link dirac operator}
    Let \(T^{0,\partial}\) be the endomorphism-valued 1-form defined by
    \begin{equation*}
    T_X^{0,\partial} \sigma = \frac{1}{2} \sum_i (F^*\omega_{\partial_r}^i)(X) c(\varphi^* \partial_r) c\Big(\frac{f^*e_i}{\varphi(r)}\Big).\end{equation*}
    \(SV|_{\dom{L}}\) can be viewed as a \(\Cl(T\dom{L} \oplus f^*TL)\)-module by defining \(\dom{c}_\partial(X) = \dom{c}(\partial_r)\dom{c}(X)\) for \(X \in TL\) and \(c_\partial(Y) = \dom{c}(\partial_r)c(\frac{Y}{\varphi(r)})\) for \(Y \in f^*TL\).
    The induced spinor connection is given by \(\conn[\partial, \partial] = \conn[] - T^{\partial, 0} - T^{0, \partial}\).
    \(\mathcal{P}\) can be rewritten as
    \begin{equation*}
        \mathcal{P} = -\sum_i \dom  c_\partial(\dom{e}_i) \conn[\partial, \partial]_{\dom{e}_i} + \dom{c}_\partial(\dom{e}_i) T^{0, \partial}_{\dom{e}_i}.
    \end{equation*}
    Under the above identifications, \(\sum_i \dom  c_\partial(\dom{e}_i) \conn[\partial, \partial]\) corresponds to the Dirac operator on \(S(T\dom{L} \oplus f^*TL)\).
    In particular, \(\mathcal{P}\) differs from the negative of the Dirac operator on \(S(T\dom{L} \oplus f^*TL)\) by a bounded endomorphism.
\end{remark}

\subsection{Dirac operator estimates}
In this subsection, we present some standard Listing-type estimates relating the twisted Dirac operator with the scalar and mean curvatures on the domain and target.
\begin{theorem}\label{thm:scalar curvature estimate}
    Let \((\dom{M}, \bnd{\dom{M}}, \dom{g})\) and \((M, \bnd{M}, g)\) be manifolds with FACS and \(F\) an asymptotically conical map between them.
    If the curvature operator \(\mathcal{R}\) on M is nonnegative and if \(\sigma\) is a smooth section of \(S(T\dom{M} \oplus F^*TM)\) vanishing near the boundary and near the singular stratum of \(\dom{M}\), then the following inequality holds:
    \begin{equation}\label{eqn:scalar curvature estimate}
        \int_{\dom{M}}\pnorm{D\sigma}^2\geq \int_{\dom{M}}\pnorm{\nabla \sigma}^2+
        \int_{\dom{M}} \frac{1}{4}(\dom{\Sc}-\onorm{\wedge^2dF}F^*\Sc )\pnorm{\sigma}^2.
    \end{equation}
\end{theorem}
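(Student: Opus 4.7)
The plan is to derive \eqref{eqn:scalar curvature estimate} from a standard Lichnerowicz–Weitzenböck formula for the twisted Dirac operator \(D\) on \(S(T\dom{M}\oplus F^*TM)\), combined with the Goette–Semmelmann-type pointwise estimate on the twisting curvature. Since \(\sigma\) is smooth and compactly supported in the smooth stratum away from \(\bnd{\dom{M}}\), there are no boundary terms and no regularity issues at the singularities, so ordinary integration by parts is legitimate and gives
\begin{equation*}
  \int_{\dom{M}} \pnorm{D\sigma}^2 \;=\; \int_{\dom{M}} \binner{D^2\sigma,\sigma} \;=\; \int_{\dom{M}} \pnorm{\conn \sigma}^2 + \int_{\dom{M}} \binner{\mathcal{K}\sigma,\sigma},
\end{equation*}
where \(\mathcal{K}\) is the curvature endomorphism produced by the Weitzenböck identity \(D^2=\conn^*\conn+\mathcal{K}\).

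Since the bundle \(V=T\dom{M}\oplus F^*TM\) is equipped with the direct sum of the Levi-Civita connection on \(T\dom{M}\) and the pulled-back Levi-Civita connection on \(F^*TM\), its curvature splits as \(\curvtens[V]=\curvtens[T\dom{M}]\oplus F^*\curvtens[M]\). Consequently \(\mathcal{K}\) decomposes into an intrinsic part, which by the classical Lichnerowicz computation equals \(\tfrac{1}{4}\Sc_{\dom{g}}\), and a twisting part \(\mathcal{K}^{\mathrm{tw}}\) coming from \(F^*\curvtens[M]\). Explicitly, in a local orthonormal frame \(\{\dom{e}_i\}\) on \(T\dom{M}\) and a pulled-back orthonormal frame \(\{f_a\}\) on \(F^*TM\),
\begin{equation*}
    \mathcal{K}^{\mathrm{tw}} \;=\; \tfrac{1}{4} \sum_{i,j,a,b} \binner{F^*\curvtens[M](\dom{e}_i,\dom{e}_j)f_a,\, f_b} \; \dom{c}(\dom{e}_i)\dom{c}(\dom{e}_j)\,c(f_a)c(f_b).
\end{equation*}

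The key step is a pointwise lower bound on \(\mathcal{K}^{\mathrm{tw}}\). Using a singular value decomposition of \(dF\), one writes \(dF\) in diagonal form with singular values \(\lambda_1,\dots,\lambda_n\ge 0\). The nonnegativity of \(\curvoper[M]\) lets one apply the Cauchy–Schwarz type estimate of Goette–Semmelmann \cite{Goette:2002} to the above Clifford-algebraic expression, yielding
\begin{equation*}
    \binner{\mathcal{K}^{\mathrm{tw}} \sigma, \sigma} \;\geq\; -\tfrac{1}{4}\,\onorm{\wedge^2 dF}\, F^*\Sc_g\cdot \pnorm{\sigma}^2.
\end{equation*}
Indeed, the quadratic form defined by \(\curvoper[M]\) on \(\bigwedge^2 F^*TM\) is nonnegative, and the pullback through \(dF\) contracts bivectors with operator norm \(\onorm{\wedge^2 dF}\), so the trace \(F^*\Sc_g=2\operatorname{tr}F^*\curvoper[M]\) controls the full twisting term up to this factor. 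Plugging the bound into the Weitzenböck identity gives exactly \eqref{eqn:scalar curvature estimate}.

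The main obstacle is the pointwise estimate on \(\mathcal{K}^{\mathrm{tw}}\); everything else is a formal Bochner argument. The reduction to the Goette–Semmelmann inequality is the standard one, but one must verify that the hypothesis \(\curvoper[M]\ge 0\) (rather than the weaker two-positivity used in some variants) is genuinely needed to dominate all Clifford cross-terms by the diagonal part \(F^*\Sc_g\). Once this algebraic inequality is in place, integrating it against \(\pnorm{\sigma}^2\) and combining with the Lichnerowicz decomposition of \(D^2\) completes the proof.
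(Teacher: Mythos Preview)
Your proposal is correct and follows essentially the same route as the paper: integrate by parts (no boundary terms since \(\sigma\) is compactly supported in the smooth interior), apply the twisted Lichnerowicz formula \(D^2=\nabla^*\nabla+\tfrac{1}{4}\Sc_{\dom{g}}+\mathcal{K}^{\mathrm{tw}}\), and bound the twisting term pointwise by \(-\tfrac{1}{4}\onorm{\wedge^2 dF}\,F^*\Sc_g\) using \(\curvoper[M]\ge 0\). The only difference is that where you cite the Goette--Semmelmann inequality, the paper carries it out explicitly: it takes the nonnegative square root \(L\) of \(\curvoper[M]\), rewrites the twisting term as \(-\tfrac{1}{2}\sum_k \dom{c}(\dom{L}w_k)c(Lw_k)\), completes the square with a parameter \(\alpha\), and then optimizes \(\alpha=\sqrt{\onorm{\wedge^2 dF}}\) after evaluating the two diagonal pieces via the Lichnerowicz computation.
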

\begin{proof}
    Since \(\sigma\) vanishes near the boundary,
    \begin{equation*}
        \int_{\dom{M}}\pnorm{D\sigma}^2=\int_{\dom{M}}\langle D^2\sigma,\sigma\rangle.
    \end{equation*}
    By the twisted Lichnerowicz theorem,
    \begin{equation*}
        D^2=\nabla^*\nabla+\frac{\dom{\Sc}}{4}-\frac{1}{2}\sum_{ \substack{i<j \\ k<l} }\left<\mathcal{R}(F_*\dom{e}_i\wedge F_*\dom{e}_j),e_k\wedge e_l\right>\dom{c}(\dom{e}_i)\dom{c}(\dom{e}_j)c(e_k)c(e_l).
    \end{equation*}
    The only term that needs to be investigated is the last one.
    For the sake of simplifying notation, enumerate \(\{\dom{e}_i\wedge\dom{e}_j\}_{i<j}\) by \(\{ \dom{w}_1,\dom{w}_2,\ldots\}\) and \(\{e_i\wedge e_j\}_{i<j}\) by \(\{w_1,w_2,\ldots\}\).
    Observe that writing \(\dom{c}(\dom{w}_k)\) makes sense if we define \(\dom{c}(\dom{e}_i\wedge\dom{e}_j)=\dom{c}(\dom{e}_i)\dom{c}(\dom{e}_j)\), and the same goes for \(c(w_k)\).
    Under this notation, the last term can be written as \(-\frac{1}{2}\sum_{i,j}\left<\mathcal{R}F_*\dom{w}_i,w_j\right>\dom{c}(\dom{w}_i)c(w_j)\).
    Since \(\mathcal{R}\) is nonnegative, it has a square root \(L\).
    We compute
    \begin{equation*}
        \begin{aligned}
            -\frac{1}{2}\sum_{i,j}\left<\mathcal{R}F_*\dom{w}_i,w_j\right>\dom{c}(\dom{w}_i)c(w_j)&=-\frac{1}{2}\sum_{i,j}\langle LF_*\dom{w}_i,Lw_j\rangle\dom{c}(\dom{w}_i)c(w_j)\\
            &=-\frac{1}{2}\sum_{i,j,k}\langle LF_*\dom{w}_i,w_k\rangle\langle Lw_j,w_k\rangle\dom{c}(\dom{w}_i)c(w_j)\\
            &=-\frac{1}{2}\sum_{i,j,k}\langle F_*\dom{w}_i,Lw_k\rangle\langle w_j,Lw_k\rangle\dom{c}(\dom{w}_i)c(w_j)\\
            &=-\frac{1}{2}\sum_k\dom{c}(\dom{L}w_k)c(Lw_k).
        \end{aligned}
    \end{equation*}
    Here, \(\dom{L}w_k= \sum_i \langle F_*\dom{w}_i,Lw_k\rangle \dom{w}_i\) by definition.
    Continuing the calculation
    \begin{equation*}
        \begin{aligned}
            -\frac{1}{2}\sum_k\dom{c}(\dom{L}w_k)c(Lw_k)=\frac{1}{4}&\sum_k\Big(\alpha^{-2}\dom{c}(\dom{L}w_k)^2+\alpha^2c(Lw_k)^2 \\
            &\qquad -\big(\alpha^{-1}\dom{c}(\dom{L}w_k)+\alpha c(Lw_k)\big)^2\Big)
        \end{aligned}
    \end{equation*}
    where \(\alpha\) is some scalar function to be specified.
    Since \(\alpha\dom{c}(\dom{L}w_k)+\alpha^{-1}c(Lw_k)\) is skew-symmetric, its square is nonpositive, so the negative of its square will be nonnegative.
    This implies
    \begin{equation*}
        -\frac{1}{2}\sum_k\dom{c}(\dom{L}w_k)c(Lw_k)\geq\frac{1}{4}\sum_{k}\left(\alpha^{-2}\dom{c}(\dom{L}w_k)^2+\alpha^2c(Lw_k)^2\right).
    \end{equation*}
    Thus, it remains to investigate \(\sum_k\alpha^2\dom{c}(\dom{L}w_k)^2\) and \(\sum_k\alpha^{-2}c(Lw_k)^2\).
    Starting with the latter, we compute
    \begin{equation*}
        \begin{aligned}
            \frac{1}{2}\sum_k\alpha^2c(Lw_k)^2&=\frac{1}{2}\alpha^2\sum_k\sum_{i,j}\langle Lw_k,w_i\rangle\langle Lw_k,w_j \rangle c(w_i)c(w_j)\\
            &=\frac{1}{2}\alpha^2\sum_{i,j}\langle Lw_i,Lw_j\rangle c(w_i)c(w_j)\\
            &=\frac{1}{2}\alpha^2\sum_{i,j}\langle \mathcal{R}w_i,w_j\rangle c(w_i)c(w_j)\\
            &=-\alpha^2\frac{F^*\Sc}{4}
        \end{aligned}
    \end{equation*}
    where the last equality is by the Lichnerowicz calculation.

    To analyze the former, start by choosing orthonormal bases \(\{\dom{e}_i\}\) on \(\dom{M}\) and \(\{e_j\}\) on \(M\) for which \(F_*\dom{e}_i=\mu_ie_i\).
    Then,
    \begin{equation*}
        \begin{aligned}
            \frac{1}{2}\sum_k\alpha^{-2}\dom{c}(\dom{L}w_k)^2&=\frac{1}{2}\alpha^{-2}\sum_k\sum_{i,j}\langle F_*\dom{w}_i,Lw_k\rangle\langle F_*\dom{w}_j,Lw_k\rangle\dom{c}(\dom{w}_i)\dom{c}(\dom{w}_j)\\
            &=\frac{1}{2}\alpha^{-2}\sum_{i,j}\langle LF_*\dom{w}_i,LF_*\dom{w}_j\rangle\dom{c}(\dom{w}_i)\dom{c}(\dom{w}_j)\\
            &=-\frac{1}{8}\alpha^{-2}\sum_{i,j,k,l}\mu_i\mu_k\mu_j\mu_l\langle R(e_i,e_k)e_j,e_l\rangle\dom{c}(\dom{e}_i)\dom{c}(\dom{e}_k)\dom{c}(\dom{e}_j)\dom{c}(\dom{e}_l)\\
            &=-\frac{1}{4}\alpha^{-2}\sum_{i,j}\mu_i^2\mu_j^2\langle R(e_i,e_j)e_j,e_i\rangle\\
            &\geq-\alpha^{-2}\onorm{\wedge^2dF}^2\frac{F^*\Sc}{4},
        \end{aligned}
    \end{equation*}
    where the last equality is, again, by the same calculation that occurs in the Lichnerowicz formula.
    Taking \(\alpha=\sqrt{\onorm{\wedge^2 dF}}\), we conclude
    \begin{equation*}
        \begin{aligned}
            -\frac{1}{2}\sum_{i,j}\left<\mathcal{R}F_*\dom{w}_i,w_j\right>\dom{c}(\dom{w}_i)c(w_j)&\geq-\alpha^{-2}\onorm{\wedge^2 dF}^2\frac{F^*\Sc}{8}-\alpha^2\frac{F^*\Sc}{8} \\
            &=-\onorm{\wedge^2 dF}\frac{F^*\Sc}{4}.
        \end{aligned}
    \end{equation*}
    To finish the proof
    \begin{equation*}
        \int_{\dom{M}}\pnorm{D\sigma}^2=\int_{\dom{M}}\langle D^2\sigma,\sigma\rangle\geq\int_{\dom{M}}\pnorm{\nabla \sigma}^2+\int_{\dom{M}}\frac{\dom{\Sc}}{4}\pnorm{\sigma}^2-\int_{\dom{M}}\onorm{\wedge^2 dF}\frac{F^*\Sc}{4}\pnorm{\sigma}^2.
    \end{equation*}
\end{proof}

A more precise estimate can be obtained by carefully analyzing the boundary terms that appear when applying Stokes's theorem.
In doing so, the mean curvatures of both the domain and target manifolds will appear.
Before stating the result, we establish some notation.
In Section \ref{subsec:twisted dirac operator on cone}, the connection \(\conn[\partial,\partial]\) was defined by removing the contributions from the radial components.
In this section, we define the connection \(\conn[\partial, \partial]\) on \(S(T\dom{M} \oplus F^*TM)|_{\bnd{\dom{M}}}\) by removing the contributions from the unit inner normal components \(\dom{\nu}\) and \(\nu\) of \(\bnd{\dom{M}}\) and \(\bnd{M}\), respectively.

To be precise, define tensors \(T^{\partial,0}\) and \(T^{0,\partial}\) as follows:
For orthonormal frames \(\{\dom{e}_i\}\) and \(\{e_i\}\) of \(\bnd{\dom{M}}\) and \(\bnd{M}\), let
\begin{align*}
    T^{\partial, 0}_X \sigma &= \frac{1}{2} \sum_i \dom{\omega}_{\dom{\nu}}^i(X) \cbar{\dom{\nu}} \cbar{\dom{e}_i} \sigma\\
    T^{0, \partial}_X \sigma &= \frac{1}{2} \sum_i (F^*\omega_{\nu}^i)(X) c(\nu) c(e_i) \sigma
\end{align*}
where \(X \in T\bnd{\dom{M}}\), \(\sigma\) is a section of \(S(T\dom{M} \oplus F^*TM)|_{\bnd{\dom{M}}}\), and
\begin{align*}
    \dom{\omega}_{\dom{\nu}}^i(X) &\coloneqq \inner{\conn[]_X \dom{\nu}, \dom{e}_i} \\
    (F^*\omega_{\nu}^i)(X) &\coloneqq \inner{\conn[]_{F_*X} \nu, e_i}.
\end{align*}
It is not hard to see that \(T^{\partial, 0}\) and \(T^{0, \partial}\) are independent of the frames \(\{\dom{e}_i\}\) and \(\{e_i\}\).

Now, for \(X\) a tangent vector of \(\bnd{\dom{M}}\), the connection \(\conn[\partial, \partial]\) is defined by
\begin{equation*}
    \conn[\partial, \partial]_X \sigma = \conn[]_X \sigma - T^{\partial, 0}_X \sigma - T^{0,\partial}_X \sigma.
\end{equation*}
One can manually check that
\begin{align*}
    \conn[\partial, \partial] \cbar{\dom{\nu}} \sigma &= \cbar{\dom{\nu}} \conn[\partial, \partial] \sigma, \\
    \conn[\partial, \partial] c(\nu) \sigma &= c(\nu) \conn[\partial, \partial] \sigma.
\end{align*}

Using the above notations, we extend Theorem \ref{thm:scalar curvature estimate}.
\begin{proposition}\label{prop:scalar-mean curvature estimate}
    Let \((\dom{M}, \bnd{\dom{M}}, \dom{g})\) and \((M, \bnd{M}, g)\) be manifolds with FACS and smooth boundaries \(\bnd{\dom{M}}\), \(\bnd{M}\).
    Let \(F\colon \dom{M} \to M\) be an asymptotically conical map and let \(F_\partial\) denote its restriction to the boundary.
    Suppose the curvature operator \(\mathcal{R}\) on M is nonnegative and suppose the second fundamental form \(\mathcal{A}\) along the boundary of \(\bnd{M}\) is also nonnegative.
    If \(\sigma\) is a smooth section of \(S(T\dom{M} \oplus F^*TM)\) vanishing near the singular stratum of \(\dom{M}\), then the following inequality holds:
    \begin{multline}
        \int_{\dom{M}}\pnorm{D\sigma}^2\geq \int_{\dom{M}}\pnorm{\nabla \sigma}^2+\int_{\dom{M}} \frac{1}{4}(\dom{\Sc}-\onorm{\wedge^2dF}F^*\Sc )\pnorm{\sigma}^2\\
        + \int_{\bnd{\dom{M}}} \inner{D^{\partial, \partial} \sigma, \sigma} + \int_{\bnd{\dom{M}}} \frac{1}{2} (\dom{H} - \onorm{d(F_\partial)} (F_\partial)^*H )\pnorm{\sigma}^2.
    \end{multline}
    Here, \(D\) is the twisted Dirac operator on \(S(T\dom{M} \oplus F^*TM)\)  and \(D^{\partial, \partial}\) is the boundary Dirac operator defined as
    \begin{equation*}
        D^{\partial, \partial} \sigma = \sum_i \cbar{\dom{\nu}}\cbar{\dom{e}_i} \conn[\partial, \partial] \sigma.
    \end{equation*}
    In the above, \(\dom{\nu}\) is the inner unit normal and the vectors \(\{\dom{e}_i\}\) form a local orthonormal frame of the boundary of \(\dom{M}\).
\end{proposition}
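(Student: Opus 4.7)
The plan is to combine the interior curvature bound from Theorem \ref{thm:scalar curvature estimate} with a Bochner-type integration by parts that isolates the boundary contributions, and then to recognize those boundary contributions as the boundary Dirac operator \(D^{\partial, \partial}\) together with the mean-curvature terms. Since \(\sigma\) vanishes near the singular stratum, Green-type identities apply without singular obstruction. Starting from the twisted Lichnerowicz formula \(D^2 = \nabla^* \nabla + \tfrac{\dom{\Sc}}{4} + R^F\), where \(R^F\) is the twisting curvature from \(F^*TM\), the standard IBP identities
\begin{equation*}
\int_{\dom{M}} \pnorm{D\sigma}^2 = \int_{\dom{M}} \inner{D^2 \sigma, \sigma} + \int_{\bnd{\dom{M}}} \inner{\cbar{\dom{\nu}} D\sigma, \sigma}, \qquad \int_{\dom{M}} \inner{\nabla^*\nabla \sigma, \sigma} = \int_{\dom{M}} \pnorm{\nabla \sigma}^2 - \int_{\bnd{\dom{M}}} \inner{\nabla_{\dom{\nu}} \sigma, \sigma}
\end{equation*}
(with \(\dom{\nu}\) the inner unit normal) combine with the pointwise bound \(\inner{R^F \sigma, \sigma} \geq -\tfrac{1}{4}\onorm{\wedge^2 dF} F^*\Sc \, \pnorm{\sigma}^2\) established in the proof of Theorem \ref{thm:scalar curvature estimate} to give
\begin{equation*}
\int_{\dom{M}} \pnorm{D\sigma}^2 \geq \int_{\dom{M}} \pnorm{\nabla \sigma}^2 + \int_{\dom{M}} \tfrac{1}{4}(\dom{\Sc} - \onorm{\wedge^2 dF} F^*\Sc) \pnorm{\sigma}^2 + \int_{\bnd{\dom{M}}} \mathcal{B}(\sigma),
\end{equation*}
where \(\mathcal{B}(\sigma) = \inner{\nabla_{\dom{\nu}} \sigma, \sigma} + \inner{\cbar{\dom{\nu}} D\sigma, \sigma}\) is the boundary integrand to be analyzed.

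The next step is to rewrite \(\mathcal{B}(\sigma)\). Decomposing \(D = \cbar{\dom{\nu}} \nabla_{\dom{\nu}} + \sum_i \cbar{\dom{e}_i} \nabla_{\dom{e}_i}\) in an adapted boundary frame and using \(\cbar{\dom{\nu}}^2 = -1\) collapses \(\mathcal{B}(\sigma)\) to
\begin{equation*}
\mathcal{B}(\sigma) = \sum_i \binner{\cbar{\dom{\nu}} \cbar{\dom{e}_i} \nabla_{\dom{e}_i} \sigma, \sigma}.
\end{equation*}
Substituting the splitting \(\nabla_{\dom{e}_i} = \conn[\partial, \partial]_{\dom{e}_i} + T^{\partial, 0}_{\dom{e}_i} + T^{0, \partial}_{\dom{e}_i}\) produces three pieces. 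The first is precisely \(\inner{D^{\partial, \partial} \sigma, \sigma}\) by definition. For the second, one computes
\begin{equation*}
\sum_i \cbar{\dom{\nu}} \cbar{\dom{e}_i} T^{\partial, 0}_{\dom{e}_i} = \tfrac{1}{2} \sum_{i, j} \dom{\omega}_{\dom{\nu}}^j(\dom{e}_i) \, \cbar{\dom{e}_i} \cbar{\dom{e}_j},
\end{equation*}
and since \(\dom{\omega}_{\dom{\nu}}^j(\dom{e}_i) = \inner{\nabla_{\dom{e}_i} \dom{\nu}, \dom{e}_j}\) is symmetric in \(i, j\) while \(\cbar{\dom{e}_i} \cbar{\dom{e}_j}\) is antisymmetric off the diagonal, only the diagonal survives, giving exactly \(\tfrac{1}{2} \dom{H}\, \pnorm{\sigma}^2\) once the sign of the trace of the Weingarten map with the inner normal is accounted for.

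Only the \(T^{0, \partial}\) term remains, which equals
\begin{equation*}
\tfrac{1}{2}\sum_{i, j} (F^*\omega_\nu^i)(\dom{e}_j)\, \cbar{\dom{\nu}} \cbar{\dom{e}_j} c(\nu) c(e_i).
\end{equation*}
The crucial observation is that \(\cbar{\dom{\nu}} \cbar{\dom{e}_j}\) and \(c(\nu) c(e_i)\) are products of two Clifford generators drawn from different summands of \(T\dom{M} \oplus F^*TM\), and hence they commute. This expression therefore has the same algebraic shape as the bulk quadratic form handled in Theorem \ref{thm:scalar curvature estimate}, but with the boundary Weingarten operator \(\mathcal{A}\) of \(\bnd{M}\) replacing the bulk curvature operator \(\mathcal{R}\). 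Writing \(\mathcal{A} = L^2\) via its nonnegative square root, diagonalizing \((F_\partial)_*\) with singular values \(\{\mu_i\}\), and applying the AM--GM identity with a free parameter \(\beta\) followed by optimization at \(\beta = \sqrt{\onorm{dF_\partial}}\) yields the pointwise lower bound
\begin{equation*}
\sum_j \inner{\cbar{\dom{\nu}} \cbar{\dom{e}_j} T^{0, \partial}_{\dom{e}_j} \sigma, \sigma} \geq -\tfrac{1}{2} \onorm{d(F_\partial)} (F_\partial)^* H \, \pnorm{\sigma}^2.
\end{equation*}
Assembling the three pieces gives the stated inequality.

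The main obstacle is the Clifford bookkeeping in the last step, specifically recognizing that the \(T^{0, \partial}\) term is a sum of products of two 2-vectors (one from each summand) that commute, so that the AM--GM trick from Theorem \ref{thm:scalar curvature estimate} transfers verbatim with the target second fundamental form of \(\bnd{M}\) in the role previously played by the target curvature operator. Once this parallel is made explicit, the remaining work reduces to verifying sign conventions for \(\dom{H}\) and checking that the diagonalization of \((F_\partial)_*\) at the boundary is compatible with the nonnegativity hypothesis on \(\mathcal{A}\).
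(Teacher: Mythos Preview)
Your proposal is correct and follows essentially the same route as the paper: the same two integration-by-parts identities, the same splitting \(\nabla = \conn[\partial,\partial] + T^{\partial,0} + T^{0,\partial}\), the symmetry/antisymmetry cancellation giving \(\tfrac12\dom{H}\) from the \(T^{\partial,0}\) piece, and the square-root/AM--GM argument (with optimized parameter \(\sqrt{\onorm{dF_\partial}}\)) bounding the \(T^{0,\partial}\) piece by \(-\tfrac12\onorm{dF_\partial}(F_\partial)^*H\). The only slip is a sign in your stated identity \(\int_{\dom{M}}\inner{\nabla^*\nabla\sigma,\sigma}=\int_{\dom{M}}\pnorm{\nabla\sigma}^2-\int_{\bnd{\dom{M}}}\inner{\nabla_{\dom{\nu}}\sigma,\sigma}\): with the \emph{inner} normal \(\dom{\nu}\) the boundary term carries a \(+\) sign (as the paper has it), and indeed your subsequent definition of \(\mathcal{B}\) and its reduction to \(\sum_i\inner{\cbar{\dom{\nu}}\cbar{\dom{e}_i}\nabla_{\dom{e}_i}\sigma,\sigma}\) are consistent with the correct sign, so this is a typo rather than a gap.
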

\begin{proof}
    Observe that
    \begin{equation*}
        \int_{\dom{M}} \pnorm{D \sigma}^2 = \int_{\dom{M}} \inner{D^2 \sigma, \sigma} + \int_{\bnd{\dom{M}}} \inner{\cbar{\dom{\nu}} D \sigma, \sigma}.
    \end{equation*}

    Rewriting the last term in the above
    \begin{align*}
        \int_{\bnd{\dom{M}}} \inner{\cbar{\dom{\nu}} D \sigma, \sigma} &= -\int_{\bnd{\dom{M}}} \inner{\conn[]_{\dom{\nu}} \sigma, \sigma} + \int_{\bnd{\dom{M}}} \inner{\sum_i \cbar{\dom{\nu}} \cbar{\dom{e}_i} \conn[]_{\dom{e}_i} \sigma, \sigma}\\
        &= -\int_{\bnd{\dom{M}}} \inner{\conn[]_{\dom{\nu}} \sigma, \sigma} + \int_{\bnd{\dom{M}}} \inner{\sum_i \cbar{\dom{\nu}} \cbar{\dom{e}_i} \conn[\partial, \partial] \sigma, \sigma}\\
        &\qquad\qquad + \int_{\bnd{\dom{M}}} \inner{\frac{1}{2} \sum_{i, j} \inner{\conn[]_{\dom{e}_i} \dom{\nu}, \dom{e}_j} \cbar{\dom{\nu}} \cbar{\dom{e}_i} \cbar{\dom{\nu}} \cbar{\dom{e}_j} \sigma, \sigma}\\
        &\qquad\qquad + \int_{\bnd{\dom{M}}} \inner{\frac{1}{2} \sum_{i, j} \inner{\conn[]_{F_*\dom{e}_i} \nu, e_j} \cbar{\dom{\nu}} \cbar{\dom{e}_i} c(\nu) c(e_j) \sigma, \sigma}.
    \end{align*}
    To simplify the notation, define \(\dom{c}_\partial(\dom{e}_i) \coloneqq \dom{c}(\dom{\nu}) \dom{c}(\dom{e}_i)\) and \(c_\partial(e_i) \coloneqq c(\nu) c(e_i)\).
    Rewriting the above,
    \begin{equation}\label{eqn:boundary dirac operator term}
        \begin{split}
            \int_{\bnd{\dom{M}}} \inner{\cbar{\dom{\nu}} D \sigma, \sigma} &= -\int_{\bnd{\dom{M}}} \inner{\conn[]_{\dom{\nu}} \sigma, \sigma} + \sum_i \int_{\bnd{\dom{M}}} \inner{D^{\partial, \partial} \sigma, \sigma}\\
            &\qquad\qquad + \int_{\bnd{\dom{M}}} \inner{\underbrace{\frac{1}{2} \sum_{i, j} \inner{\conn[]_{\dom{e}_i} \dom{\nu}, \dom{e}_j} \dom{c}_\partial(\dom{e}_i) \dom{c}_\partial(\dom{e}_j)}_{\text{Term 1}}\sigma , \sigma}\\
            &\qquad\qquad + \int_{\bnd{\dom{M}}} \inner{\underbrace{\frac{1}{2} \sum_{i, j} \inner{\conn[]_{F_*\dom{e}_i} \nu, e_j} \dom{c}_\partial(\dom{e}_i) c_\partial(e_j)}_{\text{Term 2}}\sigma , \sigma}.
        \end{split}
    \end{equation}

    Isolating Term 1, we compute
    \begin{equation*}
        \frac{1}{2} \sum_{i, j} \inner{\conn[]_{\dom{e}_i} \dom{\nu}, \dom{e}_j} \dom{c}_\partial(\dom{e}_i) \dom{c}_\partial(\dom{e}_j) = \frac{1}{2} \sum_i \inner{\dom{\nu}, \conn[]_{\dom{e}_i} \dom{e}_i} - \underbrace{\frac{1}{2} \sum_{i \neq j} \inner{\dom{\nu}, \conn[]_{\dom{e}_i} \dom{e}_j} \cbar{\dom{e}_i} \cbar{\dom{e}_j}}_{=\, 0}
    \end{equation*}
    where the last term is 0 since \(\inner{\dom{\nu}, \conn[]_{\dom{e}_i} \dom{e}_j}\) is symmetric with respect to \(\dom{e}_i\) and \(\dom{e}_j\) while \(\cbar{\dom{e}_i} \cbar{\dom{e}_j}\) is skew-symmetric with respect to \(\dom{e}_i\) and \(\dom{e}_j\) when \(i \neq j\).
    Hence, Term 1 is just the mean curvature of \(\bnd{\dom{M}}\).

    Next, we analyze Term 2.
    Start by taking frames \(\{\dom{e}_i\}\) and \(\{e_i\}\) for which \(F_*\dom{e}_i = \mu_i e_i\).
    Observe that \(\inner{\conn[]_{F_*\dom{e}_i} \nu, e_j} = -\inner{\mathcal{A} F_*\dom{e}_i, e_j}\).
    If \(B\) denotes the square root of \(\mathcal{A}\), then
    \begin{align*}
        -\inner{\mathcal{A} F_*\dom{e}_i, e_j} &= -\inner{B F_*\dom{e}_i, B e_j} = -\sum_k \inner{BF_*\dom{e}_i, e_k} \inner{Be_j, e_k}
    \end{align*}

    Hence, Term 2 can be rewritten as
    \begin{align*}
        \frac{1}{2} \sum_{i, j} \inner{\conn[]_{F_*\dom{e}_i} \nu, e_j} \dom{c}_\partial(\dom{e}_i) c_\partial(e_j) &= -\frac{1}{2}\sum_{i,j,k} \inner{BF_*\dom{e}_i, e_k} \inner{Be_j,e_k} \dom{c}_\partial(\dom{e}_i) c_\partial(e_j) \\
        &= -\frac{1}{2} \sum_k \dom{c}_\partial(\dom{B} e_k) c_\partial(B e_k) \\
        &= \frac{1}{4} \sum_k \Big(\alpha^{-2} \dom{c}_\partial(\dom{B}e_k)^2 + \alpha^2 c_\partial(Be_k)^2 \\
        &\qquad\qquad\qquad -\big(\alpha^{-1} \dom{c}_\partial(\dom{B}e_k) + \alpha c_\partial(Be_k)\big)^2\Big)
    \end{align*}
    where \(\dom{B}e_k \coloneqq \sum_i \left< F_*\dom{e}_i, Be_k \right> \dom{e}_i\) and \(\alpha\) is some scalar function to be specified.
    Notice that \(\alpha^{-1} \dom{c}_\partial(\dom{B} e_k) + \alpha c_\partial(B e_k)\) is skew-symmetric and hence the negative of its square is nonnegative.
    At the same time,
    \begin{align*}
        \frac{1}{4} \alpha^2 \sum_k c_\partial(Be_k)^2 = -\frac{1}{4} \alpha^2 \sum_k \pnorm{B e_k}^2 &= -\frac{1}{4} \alpha^2 \sum_k \left<\mathcal{A}e_k, e_k\right> \\
        &= -\frac{1}{4} \alpha^2 F^*H.
    \end{align*}
    and also
    \begin{align*}
        \frac{1}{4} \alpha^{-2} \sum_k \dom{c}_\partial(\dom{B} e_k)^2 &=  \frac{1}{4} \alpha^{-2} \sum_{k,i,j} \inner{F_*\dom{e}_i, Be_k} \inner{F_*\dom{e}_j, Be_k} \dom{c}_\partial(\dom{e}_i) \dom{c}_\partial(\dom{e}_j)\\
        &= -\frac{1}{4} \alpha^{-2} \sum_{k,i} \left<F_*\dom{e}_i, Be_k\right>^2 \\
        &= -\frac{1}{4} \alpha^{-2} \sum_{k,i} \left<\mu_i Be_i, e_k\right>^2 \\
        &\geq -\frac{1}{4} \alpha^{-2} \sum_i \onorm{dF}^2 \pnorm{B e_i}^2 \\
        &= -\frac{1}{4} \alpha^{-2} \onorm{dF}^2 F^*H.
    \end{align*}

    We have established the following estimate for Term 2:
    \begin{align*}
        \frac{1}{2} \sum_{i, j} \inner{\conn[]_{F_*\dom{e}_i} \nu, e_j} \dom{c}_\partial(\dom{e}_i) c_\partial(e_j) &\geq -\frac{1}{4} \alpha^{-2} \onorm{dF}^2 F^*H - \frac{1}{4} \alpha^2 F^*H \\
        &= -\frac{\onorm{dF}}{2} F^*H
    \end{align*}
    by taking \(\alpha = \sqrt{\onorm{dF}}\).

    To summarize,
    \begin{multline}\label{eqn:final estimate of dirac boundary term}
        \int_{\bnd{\dom{M}}} \inner{\cbar{\dom{\nu}} D \sigma, \sigma} \geq -\int_{\bnd{\dom{M}}} \inner{\conn[]_{\dom{\nu}} \sigma, \sigma} + \int_{\bnd{\dom{M}}} \inner{D^{\partial, \partial}\sigma, \sigma}\\
        + \int_{\bnd{\dom{M}}} \frac{\dom{H}}{2} \pnorm{\sigma}^2 - \int_{\bnd{\dom{M}}} \onorm{dF} \frac{F^*H}{2} \pnorm{\sigma}^2.
    \end{multline}

    At the same time, by the proof of Theorem \ref{thm:scalar curvature estimate},
    \begin{equation}\label{eqn:final estimate of square of dirac operator term}
        \int_{\dom{M}} \inner{D^2 \sigma, \sigma} = \int_{\dom{M}} \inner{\nabla^*\nabla \sigma, \sigma} + \int_{\dom{M}} \frac{\dom{\Sc}}{4} \pnorm{\sigma}^2 - \int_{\dom{M}} \frac{F^*\Sc}{4} \pnorm{\sigma}^2.
    \end{equation}.

    Integrating by parts,
    \begin{equation}\label{eqn:integration by parts of laplacian}
        \int_{\dom{M}} \inner{\nabla^*\nabla \sigma, \sigma} = \int_{\dom{M}} \pnorm{\conn[] \sigma}^2 + \int_{\bnd{\dom{M}}} \inner{\conn[]_{\dom{\nu}} \sigma, \sigma}.
    \end{equation}

    Combining Equations \eqref{eqn:final estimate of dirac boundary term}, \eqref{eqn:final estimate of square of dirac operator term}, and \eqref{eqn:integration by parts of laplacian} yields the estimate.
\end{proof}

The conclusion of Proposition \ref{prop:scalar-mean curvature estimate} can be refined if we restrict to smooth sections \(\sigma\) vanishing near the singular stratum and satisfying a particular, pointwise-defined, boundary condition \(B\).
\begin{definition}
    A smooth section \(\sigma\) of \(S(T\dom{M} \oplus F^*TM)\) satisfies the boundary condition \(B\) if
    \begin{equation*}
        \sqrt{-1}\cbar{\dom{\nu}} c(\nu) \sigma|_{\bnd{\dom{M}}} = -\sigma|_{\bnd{\dom{M}}}
    \end{equation*}
    where \(\dom{\nu}\) and \(\nu\) are the unit inner normal vectors of \(\bnd{\dom{M}}\) and \(\bnd{M}\).\footnote{Recall from Section \ref{subsec:twisted dirac operator on cone} that we view \(S(T\dom{M} \oplus F^*TM)\) as a \(\Cl(T\dom{M} \oplus F^*TM)\)-module, which means that \(\dom{c}(\dom{\nu})\) and \(c(\nu)\) anticommute.}

    Such a section \(\sigma\) is said to satisfy the absolute boundary condition \(B\).
    For brevity, we sometimes say \(\sigma\) is absolute.
\end{definition}

There is also a naturally associated complementary boundary condition \(B^{\perp}\).
\begin{definition}
    A smooth section \(\sigma\) of \(S(T\dom{M} \oplus F^*TM)\) satisfies the boundary condition \(B^{\perp}\) if
    \begin{equation*}
        \sqrt{-1}\cbar{\dom{\nu}} c(\nu) \sigma|_{\bnd{\dom{M}}} = \sigma|_{\bnd{\dom{M}}}
    \end{equation*}
    where again \(\dom{\nu}\) and \(\nu\) are the unit inner normal vectors of \(\bnd{\dom{M}}\) and \(\bnd{M}\).

    Such a section \(\sigma\) is said to satisfy the relative boundary condition \(B^\perp\).
    For brevity, we sometimes say \(\sigma\) is relative.
\end{definition}
\begin{remark}
    If \(\sigma\) is absolute and \(\tilde{\sigma}\) is relative, then \(\sigma|_{\bnd{\dom{M}}}\) is orthogonal to \(\tilde{\sigma}|_{\bnd{\dom{M}}}\) pointwise.
    This is true because \(\sigma|_{\bnd{\dom{M}}}\) is a -1-eigenvalue of \(\sqrt{-1}\cbar{\dom{\nu}} c(\nu)\) and \(\tilde{\sigma}|_{\bnd{\dom{M}}}\) is a +1-eigenvalue of \(\sqrt{-1}\cbar{\dom{\nu}} c(\nu)\).
\end{remark}
\begin{remark}\label{rem:boundary conditions in case of clifford bundle}
    Suppose that \((\dom{M}, \bnd{\dom{M}}, \dom{g}) = (M, \bnd{M}, g)\), \(M\) is even-dimensional and the map \(F\) is just the identity map.
    Then, \(S(TM \oplus TM)\) can be canonically identified with \(\Cl(TM)\) such that the actions \(\dom{c}(w)\) and \(c(w)\) are given by
    \begin{align*}
        \dom{c}(w) v_1 v_2 \dotsm v_p &= w v_1 v_2 \dotsm v_p \\
        c(w) v_1 v_2 \dotsm v_p &= (-1)^{p+1} \sqrt{-1}v_1 v_2 \dotsm v_p w.
    \end{align*}
    Every element of \(\Cl(TM)|_{\bnd{M}}\) can be written locally as a sum of products of the vectors \(\nu\) and \(\{e_i\}\), where \(\nu\) is the unit inner normal vector and \(\{e_i\}\) is any local frame for \(T\bnd{M}\).
    In particular, a section \(\sigma\) of \(\Cl(TM)\) restricted to the boundary can be locally written as a sum of products of the vectors \(\nu\) and \(\{e_i\}\).
    \(\sigma\) is absolute if and only if \(\nu\) does not appear in any term of this sum.
    \(\sigma\) is relative if and only if \(\nu\) appears in every term of this sum.
\end{remark}

The following statements concerning the boundary conditions \(B\) and \(B^{\perp}\) are true:
\begin{lemma}
    Suppose \(\sigma\) is absolute (relative).
    Then,
    \begin{enumerate}
        \item \(\cbar{X} \sigma\) and \(c(Y) \sigma\) are absolute (relative)  for \(X \in T\bnd{\dom{M}}\) and \(Y \in T\bnd{M}\).
        \item The derivative \(\conn[\partial, \partial] \sigma\) is absolute (relative).
        \item \(\cbar{\dom{\nu}} \sigma\) and \(c(\nu) \sigma\) are relative (absolute). Moreover,
        \begin{equation*}
            c(\nu) \sigma = \sqrt{-1} \cbar{\dom{\nu}} \varepsilon\sigma
        \end{equation*}
        where \(\varepsilon = -1\) if \(\sigma\) is absolute and \(\varepsilon = +1\) if \(\sigma\) is relative.
    \end{enumerate}
\end{lemma}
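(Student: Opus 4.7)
The strategy is to package both boundary conditions into the single involution $\Pi \coloneqq \sqrt{-1}\,\cbar{\dom{\nu}}\,c(\nu)$ acting on $S(T\dom{M}\oplus F^{\ast}TM)|_{\bnd{\dom{M}}}$. Since $\dom{\nu}\in T\dom{M}$ and $\nu\in F^{\ast}TM$ lie in orthogonal summands of $T\dom{M}\oplus F^{\ast}TM$, the Clifford operators $\cbar{\dom{\nu}}$ and $c(\nu)$ anticommute, and each squares to $-1$; hence $\Pi^{2}=1$ and the boundary spinor bundle splits into the $(\mp 1)$-eigenspaces of $\Pi$, which are exactly the absolute and relative subspaces. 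All three parts of the lemma then reduce to statements about which operators commute or anticommute with $\Pi$.

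For part (1), I would observe that for $X\in T\bnd{\dom{M}}$ the vectors $X$ and $\dom{\nu}$ are $\dom{g}$-orthogonal in $T\dom{M}$, so $\cbar{X}\cbar{\dom{\nu}}=-\cbar{\dom{\nu}}\cbar{X}$; moreover $\cbar{X}$ and $c(\nu)$ anticommute automatically as Clifford actions of vectors in orthogonal summands. The two sign changes cancel, giving $\cbar{X}\,\Pi=\Pi\,\cbar{X}$; the identical reasoning yields $c(Y)\,\Pi=\Pi\,c(Y)$ for $Y\in T\bnd{M}$. Consequently these operators preserve the $\Pi$-eigenspaces. Part (2) then follows at once from the identities $\conn[\partial,\partial]_{X}(\cbar{\dom{\nu}}\sigma)=\cbar{\dom{\nu}}\conn[\partial,\partial]_{X}\sigma$ and $\conn[\partial,\partial]_{X}(c(\nu)\sigma)=c(\nu)\conn[\partial,\partial]_{X}\sigma$, which are recorded just before the lemma; together they give $\conn[\partial,\partial]_{X}\circ\Pi=\Pi\circ\conn[\partial,\partial]_{X}$, so the $\Pi$-eigenvalue is preserved under boundary differentiation.

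For part (3), each of $\cbar{\dom{\nu}}$ and $c(\nu)$ instead \emph{anticommutes} with $\Pi$: only one anticommutation is needed to move either factor past the other in $\Pi$. For instance $\cbar{\dom{\nu}}\,\Pi=\sqrt{-1}\,\cbar{\dom{\nu}}^{2}c(\nu)=-\sqrt{-1}\,c(\nu)=-\Pi\,\cbar{\dom{\nu}}$, and analogously for $c(\nu)$. Acting by either operator therefore flips the sign of the $\Pi$-eigenvalue, so $\cbar{\dom{\nu}}\sigma$ and $c(\nu)\sigma$ interchange the absolute and relative types. The closing identity $c(\nu)\sigma=\sqrt{-1}\,\cbar{\dom{\nu}}\,\varepsilon\sigma$ is then obtained by multiplying the eigenvalue equation $\sqrt{-1}\,\cbar{\dom{\nu}}c(\nu)\sigma=\varepsilon\sigma$ on the left by $\cbar{\dom{\nu}}$ and using $\cbar{\dom{\nu}}^{2}=-1$. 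The entire argument is a bookkeeping exercise in Clifford algebra with no substantive obstacle; the one subtle point is to keep separate the anticommutations arising from orthogonality inside $T\dom{M}$ (or $F^{\ast}TM$) from those arising automatically from the direct-sum structure $T\dom{M}\oplus F^{\ast}TM$.
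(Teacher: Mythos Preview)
Your proof is correct and complete. The paper itself states this lemma without proof, so there is no argument to compare against; your approach via the involution $\Pi=\sqrt{-1}\,\cbar{\dom{\nu}}c(\nu)$ and its (anti)commutation with the relevant Clifford and connection operators is precisely the intended elementary verification.
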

Using these boundary conditions, Proposition \ref{prop:scalar-mean curvature estimate} can be refined as follows:
\begin{corollary}\label{cor:scalar-mean curvature estimate with boundary condition}
    Suppose \(\sigma\) satisfies the hypotheses of Proposition \ref{prop:scalar-mean curvature estimate}.
    If \(\sigma\) is absolute or relative, then
    \begin{multline}\label{eqn:scalar-mean curvature estimate with boundary condition}
        \int_{\dom{M}}\pnorm{D\sigma}^2\geq \int_{\dom{M}}\pnorm{\nabla \sigma}^2+\int_{\dom{M}} \frac{1}{4}(\dom{\Sc}-\onorm{\wedge^2dF}F^*\Sc )\pnorm{\sigma}^2\\
        + \int_{\bnd{\dom{M}}} \frac{1}{2} (\dom{H} - \onorm{d(F_\partial)} (F_\partial)^*H )\pnorm{\sigma}^2.
    \end{multline}
\end{corollary}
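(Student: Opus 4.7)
The plan is to show that the sole additional term appearing in the estimate of Proposition \ref{prop:scalar-mean curvature estimate} beyond the ones in \eqref{eqn:scalar-mean curvature estimate with boundary condition}, namely $\int_{\bnd{\dom{M}}} \inner{D^{\partial, \partial} \sigma, \sigma}$, vanishes pointwise along $\bnd{\dom{M}}$ when $\sigma$ satisfies either boundary condition. The mechanism is a standard algebraic one: the boundary chirality $\chi \coloneqq \sqrt{-1}\cbar{\dom{\nu}} c(\nu)$ is a self-adjoint involution whose $\pm 1$-eigenbundles are pointwise precisely the targets of the projections onto $B^\perp$ and $B$, so it suffices to prove that $D^{\partial,\partial}$ anticommutes with $\chi$.

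First, I would record the basic properties of $\chi$. Since $\cbar{\dom{\nu}}$ and $c(\nu)$ are skew-adjoint and anticommute (they act on the orthogonal factors $T\dom M$ and $F^*TM$ in the Clifford module $S(T\dom{M}\oplus F^*TM)$), the product $\cbar{\dom{\nu}} c(\nu)$ is skew-adjoint and squares to $-1$. Multiplying by $\sqrt{-1}$ produces a self-adjoint involution, confirming that the defining relations of $B$ and $B^\perp$ realize the eigenspace decomposition; in particular the absolute and relative sections are pointwise orthogonal along $\bnd{\dom{M}}$.

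Next, I would check that $D^{\partial,\partial} = \sum_i \cbar{\dom{\nu}}\cbar{\dom{e}_i}\conn[\partial,\partial]_{\dom{e}_i}$ anticommutes with $\chi$. This splits into two verifications. On the Clifford side, I would compute directly that each generator $\cbar{\dom{\nu}}\cbar{\dom{e}_i}$ anticommutes with $\cbar{\dom{\nu}} c(\nu)$, using $\cbar{\dom{\nu}}^2=-1$, the anticommutation of $\cbar{\dom{e}_i}$ with $\cbar{\dom{\nu}}$ (orthogonal vectors in the same Clifford action), and the anticommutation of $c(\nu)$ with $\cbar{\dom{e}_i}$ (elements of the two different Clifford actions). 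On the connection side, the identities $\conn[\partial,\partial]\cbar{\dom{\nu}}=\cbar{\dom{\nu}}\conn[\partial,\partial]$ and $\conn[\partial,\partial]c(\nu)=c(\nu)\conn[\partial,\partial]$ recorded just before Proposition \ref{prop:scalar-mean curvature estimate} show that $\chi$ commutes with $\conn[\partial,\partial]_{\dom{e}_i}$ for $\dom{e}_i\in T\bnd{\dom M}$. Combining these two facts yields $\chi D^{\partial,\partial} = -D^{\partial,\partial}\chi$.

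The conclusion then follows at once: if $\chi\sigma = \pm\sigma$ along $\bnd{\dom M}$, then $\chi(D^{\partial,\partial}\sigma) = \mp D^{\partial,\partial}\sigma$, so $D^{\partial,\partial}\sigma$ lies in the opposite eigenspace and is therefore pointwise orthogonal to $\sigma$. Hence $\inner{D^{\partial,\partial}\sigma,\sigma}\equiv 0$ on $\bnd{\dom M}$, and the extra boundary term in Proposition \ref{prop:scalar-mean curvature estimate} drops out, producing \eqref{eqn:scalar-mean curvature estimate with boundary condition}. I do not anticipate any real obstacle here; the only subtlety is bookkeeping with the factor $\sqrt{-1}$ so that $\chi$ is genuinely a self-adjoint involution, and keeping straight which vectors lie in which of the two anticommuting Clifford actions.
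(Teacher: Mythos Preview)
Your proposal is correct and is essentially the paper's own argument: the paper observes (via the preceding lemma) that $D^{\partial,\partial}$ swaps absolute and relative sections, which is exactly your statement that $D^{\partial,\partial}$ anticommutes with the boundary chirality $\chi=\sqrt{-1}\,\dom c(\dom\nu)c(\nu)$, whence $\langle D^{\partial,\partial}\sigma,\sigma\rangle\equiv 0$ on $\partial\dom M$. The only difference is presentation---you verify the anticommutation directly, while the paper cites the itemized properties of $B$ and $B^\perp$ stated just before the corollary.
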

\begin{proof}
    The inner product \(\inner{D^{\partial, \partial} \sigma, \sigma}\) vanishes since \(D^{\partial, \partial} \sigma\), which is defined by \(D^{\partial, \partial} \sigma = \sum_i \cbar{\dom{\nu}} \cbar{\dom{e}_i} \conn[\partial, \partial] \sigma\), is relative (absolute) if \(\sigma\) is absolute (relative).
\end{proof}

\section{Fredholmness of twisted Dirac operators}\label{sec:fredholmness}
Throughout this section, \((\dom{M}, \bnd{\dom{M}},\dom{g})\) and \((M, \bnd{\dom{M}},g)\) will be spin manifolds with FACS and \(F \colon \dom{M}\to M\) an asymptotically conical map.
Before proving self-adjointness, we present some functional analytic preliminaries.

\subsection{Functional analytic preliminaries}\label{subsec:functional analytic preliminaries}
\begin{definition}\label{def:spaces of functions} \mbox{}
    Let \(E\) denote a Hermitian vector bundle over \(M\) with Hermitian connection and \(V \coloneqq T\dom{M} \oplus F^*TM\).
    \begin{enumerate}
        \item \(C_c^\infty(M, E)\) is defined to be the set of smooth compactly supported sections of \(E\) vanishing near the singular stratum.
        \item \(H^1(M, E)\) is defined to be the completion of \(C_c^\infty(M, E)\) with respect to the norm
        \begin{equation*}
            \norm{\varphi}_{H^1}\coloneqq (\norm{\varphi}_{L^2}^2+\norm{\nabla\varphi}_{L^2}^2)^{1/2}
        \end{equation*}
        \item \(C_c^{\infty}(\dom{M}, SV; B)\) denotes the set of smooth compactly supported sections of \(SV\) vanishing near the singular stratum and satisfying the absolute boundary condition \(B\).
        \item \(H^1(\dom{M}, SV; B)\) is the completion of \(C_c^\infty(\dom{M}, SV; B)\) with respect to the \(H^1\)-norm.
    \end{enumerate}
\end{definition}
The twisted Dirac operator \(D\) will be regarded as an unbounded operator of \(L^2(\dom{M}, SV)\) with domain \(C_c^{\infty}(\dom{M}, SV)\).
The twisted Dirac operator \(D_B\) with absolute boundary condition \(B\) will be regarded as an unbounded operator of \(L^2(\dom{M}, SV)\) with domain \(C_c^{\infty}(\dom{M}, SV; B)\).
Observe that \(D_B\) is symmetric.

The minimal extension is denoted \(D_{min}\) and is the closure of \(D\) with respect to the graph norm.
The maximal extension \(D_{max}\) is the functional analytic adjoint \(D^*\). The same definitions are made for \((D_B)_{min}\) and \((D_B)_{max}\). Clearly, \(H^1(\dom{M}, SV; B) \subseteq \operdom(D_B)_{min}\).
Our geometric hypotheses combined with Corollary \ref{cor:scalar-mean curvature estimate with boundary condition} show that if \(\sigma \in C_c^{\infty}(\dom{M}, SV; B)\), then \(\norm{\nabla \sigma}_{L^2}^2 \leq \norm{D \sigma}_{L^2}^2\).\footnote{Actually, one can deduce \(\norm{\nabla \sigma}^2 \leq C\norm{D \sigma}^2\) without the geometric assumptions by applying Remark \ref{rem:multiplication with scalar curvature is a bounded map}.}
This implies \(\operdom(D_B)_{min} \subseteq H^1(\dom{M}, SV; B)\).

The next lemma will be useful in Section \ref{subsec:self-adjointness in general case - facs manifolds} when showing that the twisted Dirac operator in the general case differs from that of the model case by error terms which can be made negligible.

\begin{proposition}\label{prop:multiplication by 1/r is bounded}
    Let \(E\) be a Hermitian vector bundle with Hermitian connection over a compact manifold \(M\) with FACS.
    Let \(r(p) \coloneqq dist(p, \cat{C})\) denote the distance from \(p\) to the singular stratum.
    If the links of \(M\) have dimension at least two, then multiplication by \(1/r\) defines a bounded linear map from \(H^1(M, E)\) to \(L^2(M, E)\).
    Moreover, the operator norm is bounded above by a constant that depends only on the geometry and topology of \(M\).
\end{proposition}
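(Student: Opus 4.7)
The plan is to reduce the claim to a one-dimensional Hardy inequality along the radial direction of a conical neighborhood. First I fix a finite partition of unity subordinate to an open cover of the compact manifold $M$ by smooth charts and conical neighborhoods $U \cong B^\hdim \times \cone[\varepsilon]{L^\ldim}$. On charts disjoint from the singular stratum, $r$ is bounded below by a positive constant, so multiplication by $1/r$ is trivially $L^2$-bounded. It therefore suffices to prove, on each conical neighborhood $U$ and for each $\sigma \in C_c^\infty(M, E)$ supported in $U$ and vanishing near $r = 0$, the estimate
\begin{equation*}
    \|\sigma/r\|_{L^2(U)} \leq C\bigl(\|\sigma\|_{L^2(U)} + \|\nabla \sigma\|_{L^2(U)}\bigr),
\end{equation*}
and then to extend by density to $H^1(M, E)$.

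Because the FACS metric differs from the model product-cone metric $\pi^* g_B + dr^2 + r^2 g_L(b)$ by $o_2(r^2)$, the two metrics, their volume forms, and the associated $H^1$-norms are uniformly comparable on a sufficiently small conical neighborhood. By Kato's inequality $\pnorm{\nabla\pnorm{\sigma}} \leq \pnorm{\nabla \sigma}$, the problem reduces to the scalar case with $f = \pnorm{\sigma}$, and with respect to the model volume form the desired estimate follows from the fiber-wise one-dimensional Hardy inequality
\begin{equation*}
    \int_0^\varepsilon f(r)^2\, r^{\ldim - 2}\, dr \leq C \int_0^\varepsilon \bigl(f(r)^2 + (\partial_r f(r))^2\bigr)\, r^\ldim\, dr,
\end{equation*}
integrated over $(b, x) \in B \times L$.

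The one-dimensional Hardy inequality is obtained by introducing a smooth cutoff $\chi$ equal to $1$ on $[0, \varepsilon/2]$ and supported in $[0, \varepsilon)$, applying integration by parts to $(\chi f)^2\, (r^{\ldim-1})'$, and using Cauchy--Schwarz followed by Young's inequality; the $\chi'$ contribution is supported away from $r = 0$ and is thus absorbed into the $L^2$-norm of $f$. The crucial factor of $\ldim - 1$ produced by $(r^{\ldim-1})' = (\ldim - 1) r^{\ldim-2}$ is strictly positive precisely when $\ldim \geq 2$, which is the link-dimension hypothesis.

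The main obstacle is exactly this dimensional restriction: the inequality fails when $\ldim = 1$, as one sees with test functions of the form $f(r) = (-\log r)^\alpha$ on the two-dimensional cone $\cone{\sphe^1}$. The uniformity of the operator norm in terms of the geometry and topology of $M$ follows from compactness (giving a finite partition of unity) together with the fact that on each conical neighborhood the constant $C$ depends only on $\ldim$, on $\varepsilon$, and on the size of the $o_2(r^2)$ error terms in the metric, all of which are bounded by the given data.
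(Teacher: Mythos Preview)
Your proposal is correct and follows essentially the same route as the paper: localize to a conical neighborhood, reduce to the model cone metric, and prove the radial Hardy inequality by integrating by parts against $(r^{\ldim-1})'$, using $\ldim\ge 2$ to make the factor $\ldim-1$ positive. The paper's computation is marginally more direct---it works with the section itself via $\partial_r\pnorm{\sigma}^2 = 2\operatorname{Re}\inner{\nabla_{\partial_r}\sigma,\sigma}$ rather than passing through Kato's inequality, skips the cutoff (sections in $C_c^\infty$ localized by the partition of unity already vanish near $r=\varepsilon$), and instead of Young's inequality simply divides the resulting Cauchy--Schwarz bound by $\norm{\sigma/r}_{L^2}$ to obtain the explicit constant $2/(\ldim-1)$---but these are cosmetic differences, not a different argument.
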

\begin{proof}
    It is clear that multiplication by \(1/r\) is a bounded operator when restricted to \(H^1\)-sections supported uniformly away from the singularities.
    Thus, by localizing at the singularities, to prove the lemma, it is sufficient to consider the case where \(M = \R^\hdim \times \cone{L}\) and restrict to smooth compactly supported sections vanishing near the singular stratum.
    In addition, the proof of the asymptotically conical case follows from the conical case by accounting for error terms, so we only consider the case where \(g = \pi^*g_B \oplus dr^2 \oplus g_L(b)\).

    Let \(\ldim\) denote the dimension of \(L\).
    Take a smooth compactly supported section \(\sigma\) vanishing near the singular stratum.
    Applying integration by parts, we compute\footnote{Throughout this proof, if the domain of integration is not specified, it is assumed to be \(\R^\hdim \times L\).}
    \begin{align*}
        \norm{\frac{1}{r} \sigma}_{L^2}^2 &= \iint_{0}^{\infty} \pnorm{\sigma}^2 r^{\ldim-2} dr \\
        &= -\frac{2}{\ldim-1} \iint_{0}^{\infty} \inner{\conn[]_{\partial_r} \sigma, \sigma} r^{\ldim-1}dr \\
        &\leq \frac{2}{\ldim-1} \norm{\conn[]_{\partial_r} \sigma}_{L^2} \norm{\frac{1}{r} \sigma}_{L^2}.
    \end{align*}
    Dividing both sides by \(\norm{\frac{1}{r} \sigma}_{L^2}\), we get
    \begin{equation*}
        \norm{\frac{1}{r} \sigma}_{L^2} \leq \frac{2}{\ldim-1} \norm{\conn[]_{\partial_r} \sigma}_{L^2} \leq \frac{2}{\ldim-1} \norm{\sigma}_{H^1}
    \end{equation*}
\end{proof}
\begin{remark}\label{rem:multiplication with scalar curvature is a bounded map}
    Combining Proposition \ref{prop:multiplication by 1/r is bounded} with Corollary \ref{cor:scalar curvature of cone} shows that if the links have dimension at least two, then multiplication by \(\sqrt{\Sc}\) is a bounded map from \(H^1\) to \(L^2\).
\end{remark}

Next, we prove a Rellich-Kondrachov compactness theorem for manifolds with FACS.
\begin{proposition}\label{prop:compactness theorem for facs manifolds}
    Let \(E\) be a Hermitian vector bundle with Hermitian connection over a compact manifold \(M\) with FACS.
    If the links of \(M\) have dimension at least two, then the inclusion of \(H^1(M, E)\) into \(L^2(M, E)\) is compact.
\end{proposition}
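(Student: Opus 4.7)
The plan is a standard cutoff-and-diagonalization argument that reduces the compactness statement on a FACS manifold to the classical Rellich-Kondrachov theorem on smooth compact manifolds with boundary, using the Hardy-type inequality of Proposition \ref{prop:multiplication by 1/r is bounded} to control the $L^2$-mass near the singular stratum $\cat{C}$. For each $\delta > 0$ I would fix a smooth cutoff $\chi_\delta \colon M \to [0,1]$ equal to $1$ on $\{r \geq 2\delta\}$, equal to $0$ on $\{r \leq \delta\}$, and with $\pnorm{\nabla \chi_\delta} \leq C/\delta$ throughout. For almost every sufficiently small $\delta$ the level set $\{r = \delta/2\}$ is a smooth hypersurface in the smooth stratum, so $M_\delta \coloneqq \{r \geq \delta/2\}$ is a genuine smooth compact manifold (with corners if $\bnd{M}$ is nonempty).

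Given a bounded sequence $\{\sigma_n\}$ in $H^1(M,E)$, I would proceed in two steps. First, for each fixed $\delta$ the sections $\chi_\delta \sigma_n$ are uniformly bounded in $H^1(M_\delta, E)$: the potentially dangerous term $\pnorm{\nabla \chi_\delta} \cdot \pnorm{\sigma_n}$ is supported in the annulus $\{\delta \leq r \leq 2\delta\}$ and pointwise dominated by $2C\pnorm{\sigma_n}/r$, so its $L^2$-norm is controlled by $\norm{\sigma_n}_{H^1}$ by Proposition \ref{prop:multiplication by 1/r is bounded}. The classical Rellich-Kondrachov theorem applied on the smooth compact manifold $M_\delta$ then extracts a subsequence of $\{\chi_\delta \sigma_n\}$ convergent in $L^2(M_\delta, E)$, hence in $L^2(M,E)$ after extension by zero.

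Second, taking a decreasing sequence $\delta_k \to 0$ and performing such an extraction successively at each level, a diagonal argument produces a single subsequence $\{\sigma_{n_j}\}$ for which $\chi_{\delta_k} \sigma_{n_j}$ converges in $L^2(M,E)$ for every $k$. The tail estimate
\begin{equation*}
    \norm{(1 - \chi_{\delta_k}) \sigma_n}_{L^2} \leq 2 \delta_k \norm{\sigma_n / r}_{L^2} \leq C' \delta_k \norm{\sigma_n}_{H^1},
\end{equation*}
coming again from Proposition \ref{prop:multiplication by 1/r is bounded}, shows that the $L^2$-mass of $\sigma_n$ near $\cat{C}$ can be made uniformly arbitrarily small by choosing $k$ large. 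A standard three-$\varepsilon$ argument then shows that $\{\sigma_{n_j}\}$ is Cauchy in $L^2(M,E)$, completing the proof.

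The main obstacle is the uniform $H^1$-bound on $\chi_\delta \sigma_n$, since naively $\pnorm{\nabla \chi_\delta}$ blows up like $1/\delta$; this is precisely where the hypothesis that the links have dimension at least two is used, through Proposition \ref{prop:multiplication by 1/r is bounded}. In the one-dimensional link case the corresponding Hardy inequality fails (the relevant radial integral diverges logarithmically), and one expects compactness to fail along with it, which is consistent with the pathology already observed in Remark \ref{rem:counterexample to comparison theorem when 1-dimensional links are present}.
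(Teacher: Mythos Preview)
Your proof is correct and follows essentially the same exhaustion-plus-diagonal scheme as the paper: reduce to classical Rellich--Kondrachov on smooth compact pieces away from $\cat{C}$, control the $L^2$-mass near $\cat{C}$ uniformly, and finish by a diagonal subsequence argument. The only notable difference is that your tail estimate $\norm{(1-\chi_{\delta})\sigma_n}_{L^2} \leq 2\delta\norm{\sigma_n/r}_{L^2}$ appeals to Proposition~\ref{prop:multiplication by 1/r is bounded} directly, whereas the paper re-derives the analogous bound $\norm{\sigma_n}_{L^2(\{r<\varepsilon\})}^2 \leq C\varepsilon$ by a separate integration-by-parts computation (which itself invokes Proposition~\ref{prop:multiplication by 1/r is bounded} for the boundary term); your route is slightly cleaner but not materially different.
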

\begin{proof}
    The idea of the proof is to start by taking an exhaustion of \(M\) by smooth compact manifolds, with boundary, approaching the singular stratum.
    Then, apply the usual compactness theorem to each compact manifold in the exhaustion.
    What needs to be shown is that a sequence \(\{\sigma_k\}\) uniformly bounded in the \(H^1\)-norm becomes uniformly small when restricted to the complement of such an exhaustion.
    For this, it is sufficient to establish the statement in a conical neighborhood of any singular point.
    By accounting for error terms, it is enough to consider the model case where \(M = \R^\hdim \times \cone{L}\) has the conical metric \(g = \pi^*g_B \oplus dr^2 \oplus g_L(b)\).
    Here, \(L\) is an \(\ldim\)-dimensional smooth compact manifold without boundary.

    Take a sequence \(\{\sigma_k\}\) of smooth compactly supported sections supported away from the singular stratum.
    Without loss of generality, assume that the \(H^1\)-norms are bounded above by 1.

    Given some \(\varepsilon\), applying integration by parts shows\footnote{Throughout this proof, if the domain of integration is not specified, it is assumed to be \(\R^\hdim \times L.\)}
    \begin{equation*}
        \iint_{0}^{\varepsilon} \pnorm{\sigma_k}^2 r^\ldim dr = -\frac{2}{\ldim+1} \iint_{0}^{\varepsilon} \inner{\conn[]_{\partial_r} \sigma_k, \sigma_k} r^{\ldim+1} dr + \frac{1}{\ldim+1} \int \pnorm{\sigma_k|_{r=\varepsilon}}^2 \varepsilon^{\ldim+1}.
    \end{equation*}
    Estimating the first term on the right-hand side,
    \begin{align*}
        -\frac{2}{\ldim+1} \iint_{0}^{\varepsilon} \inner{\conn[]_{\partial_r} \sigma_k, \sigma_k} r^{\ldim+1} dr &\leq \frac{2}{\ldim+1} \norm{\conn[] \sigma_k}_{L^2} \iint_0^\varepsilon \pnorm{r\sigma_k}^2 r^l dr  \\
        &\leq \frac{2 \varepsilon^2}{\ldim+1}.
    \end{align*}
    Estimating the boundary term,
    \begin{align*}
        \frac{1}{\ldim+1} \int \pnorm{\sigma_k|_{r=\varepsilon}}^2 \varepsilon^{\ldim+1} &= \frac{\varepsilon}{\ldim+1} \iint_{0}^{\varepsilon} \frac{d}{dr} \big(\pnorm{\sigma_k}^2 r^\ldim\big) dr \\
        &= \frac{2\varepsilon}{\ldim+1} \iint_{0}^{\varepsilon} \inner{\conn[]_{\partial_r} \sigma_k, \sigma_k} r^\ldim dr \\
        &\qquad\qquad + \frac{l\varepsilon}{\ldim+1} \iint_{0}^{\varepsilon} \pnorm{\sigma_k}^2 r^{\ldim-1} dr.
    \end{align*}
    By the Cauchy-Schwarz inequality,
    \begin{equation*}
        \frac{2\varepsilon}{\ldim+1} \iint_{0}^{\varepsilon} \inner{\conn[]_{\partial_r} \sigma_k, \sigma_k} r^\ldim dr \leq \frac{2\varepsilon}{\ldim+1} \norm{\sigma_k}_{L^2} \norm{\conn[] \sigma_k}_{L^2} \leq \frac{2\varepsilon}{\ldim+1}.
    \end{equation*}
    By Proposition \ref{prop:multiplication by 1/r is bounded},
    \begin{equation*}
        \frac{l\varepsilon}{\ldim+1} \iint_{0}^{\varepsilon} \pnorm{\sigma_k}^2 r^{\ldim-1} dr \leq \frac{l\varepsilon}{\ldim+1} \norm{\frac{1}{r^{1/2}} \sigma_k}_{L^2}^2 \leq \frac{Cl\varepsilon}{\ldim+1}.
    \end{equation*}

    To summarize, the above estimates show that
    \begin{equation}\label{eqn:L^1 compactness estimate}
        \iint_{0}^{\varepsilon} \pnorm{\sigma_k}^2 r^\ldim dr \leq C \varepsilon.
    \end{equation}
    for some constant \(C\), that is, the \(L^2\)-norms of the \(\{\sigma_k\}\) restricted to \(\R^\hdim \times (0, \varepsilon) \times L\) are bounded uniformly by a constant multiple of \(\varepsilon\).

    Let \(K_\rho\) denote the set of points that are at least distance \(\rho\) from the singular stratum.
    Take an exhaustion \(\{K_{\rho_k}\}\), where \(\rho_k \to 0\) as \(k \to \infty\).
    The Rellich-Kondrachov compactness theorem holds for each \(K_{\rho_k}\).
    Restricting \(\{\sigma_i\}\) to \(K_{\rho_1}\), there is a subsequence \(\sigma_i^{(1)}\) that is Cauchy on \(K_{\rho_1}\).
    Taking this subsequence and restricting it to \(K_{\rho_2}\), there is a subsequence of \(\{\sigma_i^{(1)}\}\), denoted \(\{\sigma_i^{(2)}\}\), which is Cauchy on \(K_{\rho_2}\).
    Continuing this process inductively, we can construct \(\sigma_i^{(n)}\) which is Cauchy on \(K_{\rho_n}\) and is a subsequence of \(\{\sigma_i^{(n-1)}\}\).
    Consider the sequence \(\{\sigma_i^{(i)}\}\).
    This sequence is Cauchy on each \(K_{\rho_n}\) by construction.
    To show that it is Cauchy on \(M\), given \(\varepsilon > 0\), take \(k_0\) large enough so that \(\rho_{k_0} < \varepsilon\).
    By Equation \eqref{eqn:L^1 compactness estimate}, the sequence \(\{\sigma_i^{(i)}\}\) restricted to \(M \setminus K_{\rho_{k_0}}\) is uniformly bounded by \(C \varepsilon\).
    Take \(i_0\) large enough so that if \(i,j \geq i_0\), then \(\norm{(\sigma_i^{(i)}|_{ K_{\rho_{k_0}} }) - (\sigma_j^{(j)}|_{ K_{\rho_{k_0}} })}_{L^2} < \varepsilon\).
    Thus, for \(i,j \geq i_0\),
    \begin{equation*}
        \norm{ \sigma_i^{(i)} - \sigma_j^{(j)} }_{L^2} < C \varepsilon + \varepsilon.
    \end{equation*}
\end{proof}
\begin{remark}\label{rem:compactness theorem for C_1-manifolds with C_1-boundary}
    The same proof shows that Proposition \ref{prop:compactness theorem for facs manifolds} still holds in the context of FACS manifolds with FACS boundary.
    This observation will be useful when we prove the Rellich-Kondrachov compactness theorem for manifolds with iterated asymptotically conical singularities in Section \ref{subsec:functional analytic preliminaries - iacs manifolds}.
\end{remark}

To end this subsection, we sketch the proof of Fredholmness.
First, observe that if self-adjointness of \(D_B\) is proven, then since the domain of the operator compactly embeds into \(L^2\), a standard compact resolvent argument implies that the operator is Fredholm.

To prove self-adjointness, we use the observation that if self-adjointness can proven locally, then the operator must be self-adjoint globally.
To see this, take \(\sigma \in \operdom(D_B)^*\). \(\sigma\) can be written as \(\sigma=\sum_i\varphi_{U_i}\sigma\), where \(\{\varphi_{U_i}\}\) is a partition of unity subordinate to some finite open cover \(\{U_i\}\).
Each \(\varphi_{U_i} \sigma \in \operdom (D_B)^*\) since \(\sigma\) is.
If each \(\varphi_{U_i} \sigma\) is in \(\operdom (D_B)_{min}\), then \(\sigma\) must be as well.
Self-adjointness is standard for a neighborhood of a point in the smooth stratum, so it is only necessary to consider the case where \(U_i\) is a conical neighborhood.
Thus, after localizing the problem, we reduce to the case where the manifold is diffeomorphic to a direct product of a base space and a cone, and, geometrically, the manifold is an asymptotic family of cones over the base.
Using the Kato-Rellich perturbation theorem and Proposition \ref{prop:multiplication by 1/r is bounded}, it is possible to further reduce to the case where the metric is a direct product of a flat metric on the base with a cone metric.
Finally, self-adjointness in this direct product case is a consequence of self-adjointness for a cone, which we now prove.

\subsection{Self-adjointness for a cone}\label{sec:self-adjointness over point}
We consider the case of cone manifolds \((C\dom{L}^{\tdim}, dr^2 + r^2 \dom{g})\) and \((CL^{\tdim}, dr^2 + r^2 g)\) with links \((\dom{L}^{\tdim-1}, \dom{g})\) and \((L^{\tdim - 1}, g)\).
Here, \(\dom{L}\) and \(L\) are smooth compact manifolds without boundary.
The map \(F \colon C\dom{L} \to CL\) is assumed to be a smooth map of the form \(F(r, x) = (\varphi(r), f(x))\).

Recall Proposition \ref{prop:dirac operator under unitary transformation with grading}, which states that the Dirac operator on \(SV\) takes the form
\begin{equation}
    \Psi^{-1}D\Psi = \begin{pmatrix}
    0 & -1\\
    1 & 0
    \end{pmatrix}\partial_r + \frac{1}{r}\begin{pmatrix}
        0 & \mathcal{P}\\
        \mathcal{P} & 0\\
    \end{pmatrix}
\end{equation}
In general, if \(\mathcal{P}\) is diagonalizable with eigenvalues \(\{\lambda_i\}\), then
\begin{equation*}
    \begin{pmatrix}
        0 & -1\\
        1 & 0\\
    \end{pmatrix}\partial_r + \frac{1}{r}\begin{pmatrix}
        0 & \mathcal{P}\\
        \mathcal{P} & 0
    \end{pmatrix}
\end{equation*}
is self-adjoint if and only if \(|\lambda_i| \geq \frac{1}{2}\) for all \(i\), according to \cite{Bruning:1988}.
Using this, the next lemma allows us to conclude self-adjointness when the link has dimension at least two, that is, when \(n-1\) is at least 2.
\begin{lemma}\label{lem:link Dirac operator estimate}
    Suppose the curvature operator \(\mathcal{R}\) of \(CL^n\) is nonnegative.
    If \(\dom{\Sc} \geq \onorm{d F}^2 F^*\Sc\), then \(\abs{\mathcal{P}} \geq \frac{ \sqrt{ (n-1)(n-2) } }{2}\).
\end{lemma}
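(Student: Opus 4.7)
The plan is to prove the equivalent integral estimate $\int_{\dom{L}}\pnorm{\mathcal{P}\sigma}^2 \geq \frac{(n-1)(n-2)}{4}\int_{\dom{L}}\pnorm{\sigma}^2$ for all smooth $\sigma$, via a Bochner--Lichnerowicz computation at the link level. By Remark~\ref{rem:link operator in terms of link dirac operator}, $\mathcal{P}$ agrees, modulo a bounded endomorphism coming from $T^{0,\partial}$, with the negative of a twisted Dirac operator $\bar D$ on $S(T\dom{L}\oplus f^*TL)$ associated to the link map $f\colon \dom{L}\to L$. Consequently the same algebraic template used in the proof of Theorem~\ref{thm:scalar curvature estimate} applies to $\mathcal{P}^2$.

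The crucial new input, distinguishing this estimate from a naive application of Theorem~\ref{thm:scalar curvature estimate}, is Proposition~\ref{prop:curvature of cone}: the hypothesis $\curvoper[CL]\geq 0$ forces $\curvoper[L]\geq \mathrm{Id}|_{\wedge^2 TL}$, i.e., the target link has curvature operator bounded below by the identity, not merely by zero. I would decompose $\curvoper[L] = \mathrm{Id} + (\curvoper[L] - \mathrm{Id})$ and treat the two pieces separately in the Clifford contraction arising in $\mathcal{P}^2$. The completion-of-squares argument from Theorem~\ref{thm:scalar curvature estimate}, applied to the nonnegative residual $\curvoper[L]-\mathrm{Id}$, contributes a lower bound of $-\onorm{\wedge^2 df}(f^*\Sc_L - (n-1)(n-2))/4$, while the $\mathrm{Id}$ piece---computed in an adapted frame $\{\dom{e}_i\},\{e_j\}$ that diagonalizes $df$ with singular values $\mu_i$---contributes a positive mass-like term whose magnitude is governed by $2\,\trac(\mathrm{Id}|_{\wedge^2 T\dom{L}}) = (n-1)(n-2)$. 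Combining with Corollary~\ref{cor:scalar curvature of cone}, which rewrites $\Sc_{\dom{L}}$ and $\Sc_L$ in terms of the cone scalar curvatures, and the scalar curvature hypothesis $\dom{\Sc} \geq \onorm{dF}^2 F^*\Sc$ on the cones, the remaining scalar-curvature-difference terms become nonnegative, and the net lower bound simplifies to $\frac{(n-1)(n-2)}{4}$ once all cancellations are tracked.

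The main obstacle is the algebraic bookkeeping for the $\mathrm{Id}$-piece of $\curvoper[L]$ together with the bounded endomorphism coming from $T^{0,\partial}$: neither fits the clean completion-of-squares framework of Theorem~\ref{thm:scalar curvature estimate}, and both generate Clifford-algebraic cross terms involving the anticommuting operators $X_p := \cbar{\dom{e}_p}\, c(e_p)$, for which one must work directly. Restricting attention to the frame that diagonalizes $df$ reduces these computations to an explicit finite-dimensional calculation inside $\Cl(n-1)$, but the specific cancellations that yield the precise constant $(n-1)(n-2)$---as opposed to $n-1$, or some $\mu_i$-dependent quantity---require care and constitute the real content of the proof.
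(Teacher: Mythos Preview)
Your approach differs from the paper's in one key choice, and that choice is what makes the paper's proof short while yours remains open-ended. You work with the connection \(\nabla^{\partial,\partial}\), viewing \(SV|_{\dom{L}}\) as the spinor bundle of \(T\dom{L}\oplus f^*TL\), and then must contend with (i) the extra endomorphism \(T^{0,\partial}\), and (ii) the decomposition \(\curvoper[L]=\mathrm{Id}+(\curvoper[L]-\mathrm{Id})\). The paper instead squares \(\mathcal{P}\) using the connection \(\nabla^{\partial,0}\), so that the twisting bundle is \((F^*T\cone{L})|_{\dom{L}}\) and the relevant twisting curvature is \(\curvoper[CL]\) itself---which is nonnegative \emph{by direct hypothesis}. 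The completion-of-squares argument from Theorem~\ref{thm:scalar curvature estimate} then applies verbatim, yielding
\[
\sum_{i<j}\cbar{\dom{e}_i}\cbar{\dom{e}_j}\,\curvtens^{\partial,0}_{\dom{e}_i,\dom{e}_j}\ \geq\ \frac{\Sc_{\dom{L}}}{4}-\varphi(1)^2\onorm{\wedge^2 df}\,\frac{F^*\Sc_{CL}}{4}\ \geq\ \frac{\Sc_{\dom{L}}}{4}-\frac{\onorm{df}^2}{4}\big(f^*\Sc_L-(n-1)(n-2)\big),
\]
after which the cone scalar-curvature formula and the hypothesis \(\Sc_{C\dom{L}}\geq\onorm{dF}^2F^*\Sc_{CL}\) give \(\Sc_{\dom{L}}\geq\onorm{df}^2(f^*\Sc_L-(n-1)(n-2))+(n-1)(n-2)\), and the bound \(\mathcal{P}^2\geq(\nabla^{\partial,0})^*\nabla^{\partial,0}+\tfrac{(n-1)(n-2)}{4}\) follows with no further Clifford algebra.

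Your route is in principle equivalent---the two connections differ by \(T^{0,\partial}\), so the computations must agree---but the bookkeeping you flag as ``the real content of the proof'' is not merely tedious, it is also where your sketch is imprecise. The \(\mathrm{Id}\)-piece of the twisting term, taken on its own, is \(-\tfrac{1}{2}\sum_{i<j}\mu_i\mu_j\,\cbar{\dom{e}_i}\cbar{\dom{e}_j}c(e_i)c(e_j)\) in the diagonalizing frame; this is not a positive scalar, and the Listing-type bound for it is \emph{negative}, namely \(\geq -\onorm{\wedge^2 df}\cdot\tfrac{(n-1)(n-2)}{4}\). The positivity you want emerges only after combining this with the \(T^{0,\partial}\) cross terms and the square of that endomorphism, and you have not indicated how those cancellations actually go. So there is a genuine gap in the proposal, and it is precisely the gap that the paper's choice of \(\nabla^{\partial,0}\) sidesteps entirely.
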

\begin{proof}
    The differential operator \(\mathcal{P}\) is given by
    \begin{equation*}
        \mathcal{P} = -\sum_i \cbar{\partial_r} \cbar{\dom{e}_i} \conn[\partial, 0]
    \end{equation*}
    where \(\{\dom{e}_i\}\) is a local orthonormal frame on \(\dom{L}\).
    Taking the square,
    \begin{equation*}
        \mathcal{P}^2 = (\conn[\partial, 0])^* \conn[\partial, 0] + \sum_{i < j} \cbar{\dom{e}_i} \cbar{\dom{e}_j} \curvtens[\partial, 0]_{\dom{e}_i, \dom{e}_j}
    \end{equation*}
    where \(\curvtens[\partial, 0]\) is the curvature tensor of \(SV|_{\dom{L}}\) viewed as a spinor bundle associated to \(T\dom{L} \oplus (F^*T\cone{L})|_{\dom{L}}\).
    By \cite[Theorem~4.15]{Lawson:1989},
    \begin{multline*}
        \curvtens[\partial, 0]_{\dom{e}_i, \dom{e}_j} \sigma = \frac{1}{2} \sum_{k < l} \inner{R_{\dom{e}_i, \dom{e}_j}^{T\dom{L}} \dom{e}_k, \dom{e}_l} \cbar{\dom{e}_k} \cbar{\dom{e}_l} \sigma \\
        + \frac{1}{2} \sum_{k < l} \binner{R_{f_*\dom{e}_i, f_*\dom{e}_j}^{TCL} \frac{e_k}{\varphi(1)}, \frac{e_l}{\varphi(1)}}_{\varphi(1)^2 g} c\Big(\frac{e_k}{\varphi(1)}\Big) c\Big(\frac{e_l}{\varphi(1)}\Big) \sigma
    \end{multline*}
    where \(\{e_k\}\) is a local orthonormal frame on \(TL\).
    The proof of Theorem \ref{thm:scalar curvature estimate} shows
    \begin{equation*}
        \sum_{i < j} \cbar{\dom{e}_i} \cbar{\dom{e}_j} \curvtens_{\dom{e}_i, \dom{e}_j}^{\partial, 0} \geq \frac{\Sc_{\dom{L}}}{4} - \varphi(1)^2 \onorm{\wedge^2 df} \frac{F^*\Sc_{CL}}{4} \notag
    \end{equation*}
    and thus
    \begin{align}
        \sum_{i < j} \cbar{\dom{e}_i} \cbar{\dom{e}_j} \curvtens_{\dom{e}_i, \dom{e}_j}^{\partial, 0} &\geq \frac{\Sc_{\dom{L}}}{4} - \varphi(1)^2 \onorm{df}^2 \frac{F^*\Sc_{CL}}{4} \notag \\
        \label{eqn:curvature estimate on cone} &\geq \frac{\Sc_{\dom{L}}}{4} - \frac{\onorm{df}^2}{4} \big( f^*\Sc_L - (n-1)(n-2) \big)
    \end{align}
    On the other hand, by hypothesis \(\Sc_{C\dom{L}} \geq \onorm{dF}^2 F^* \Sc_{CL}\), which implies
    \begin{align*}
        \Sc_{\dom{L}} - (n-1)(n-2) &\geq \max\{\varphi'(1), \varphi(1) \onorm{df}\}^2 \frac{f^*\Sc_L - (n-1)(n-2)}{\varphi(1)^2} \\
        &\geq \onorm{df}^2 \big(f^*\Sc_L - (n-1)(n-2)\big)
    \end{align*}
    Hence,
    \begin{equation*}
        \Sc_{\dom{L}} \geq \onorm{df}^2 \big(f^*\Sc_L - (n-1)(n-2)\big) + (n-1)(n-2).
    \end{equation*}
    Substituting the above into \eqref{eqn:curvature estimate on cone} gives
    \begin{equation*}
        \sum_{i < j} \cbar{\dom{e}_i} \cbar{\dom{e}_j} \curvtens_{\dom{e}_i, \dom{e}_j}^{\partial, 0} \geq \frac{(n-1)(n-2)}{4}.
    \end{equation*}

    To summarize,
    \begin{align*}
        \mathcal{P}^2 &= (\conn[\partial, 0])^* \conn[\partial, 0] + \sum_{i < j} \cbar{\dom{e}_i} \cbar{\dom{e}_j} \curvoper_{\dom{e}_i, \dom{e}_j}^{\partial, 0} \\
        &\geq (\conn[\partial, 0])^* \conn[\partial, 0] + \frac{(n-1)(n-2)}{4}.
    \end{align*}
    This implies \(\abs{\mathcal{P}} \geq \frac{\sqrt{(n-1)(n-2)}}{2}\).
\end{proof}
\begin{remark}\label{rem:enough to know link dirac operator is Fredholm}
    We used the standard elliptic operator theory of the smooth compact manifold \(\dom{L}\) to deduce that \(\mathcal{P}\) is diagonalizable.
    Recall that \(\mathcal{P}\) differs from the link Dirac operator \(D^{\partial, \partial}\) by a symmetric endomorphism (see Remark \ref{rem:link operator in terms of link dirac operator}).
    Thus, even if the link is not smooth compact without boundary, if it is known that \(D^{\partial, \partial}\) is self-adjoint, that the extra endomorphism term is bounded, and that a form of the Rellich-Kondrachov compactness theorem holds for the link, then \(\mathcal{P}\) must be diagonalizable.
    This observation will be important in Section \ref{sec:manifolds with iacs}, when we prove self-adjointness for manifolds with iterated asymptotically conical singularities by induction.
\end{remark}
\subsection{Self-adjointness in the general case}\label{subsec:self-adjointness in general case - facs manifolds}
Let \(\dom{b}_0\) and \(b_0\) be singular points of \(\dom{M}\) and \(M\) for which \(F(\dom{b}_0) = b_0\).
Take conical neighborhoods \(\dom{U} \cong \R^\hdim \times \cone[\dom{\varepsilon}]{\dom{L}}\) and \(U \cong \R^\hdim \times \cone[\varepsilon]{L}\) of the points \(\dom{b}_0\) and \(b_0\) such that \(F(\dom{U}) \subseteq U\) and
\begin{equation*}
    F(\dom{b},r,\dom{x}) = (\psi(\dom{b}), \varphi(\dom{b},r), f(\dom{b},\dom{x}))
\end{equation*}
in the trivialization.
The goal is to reduce to the case where the domain and target are just the direct product of a flat base with a cone.

In the trivialization, the metrics \(\dom{g}\) and \(g\) take the form
\begin{gather*}
    \dom{g}(\dom{b}) = \pi^*g_{\dom{B}} \oplus dr^2 \oplus r^2 g_{\dom{L}}(\dom{b}) + o_2(r^2)\\
    g(b) = \pi^*g_B \oplus dr^2 \oplus r^2 g_L(b) + o_2(r^2)
\end{gather*}
where \(\dom{B} = \R^\hdim = B\), with \(g_{\dom{B}}\) and \(g_B\) being some metrics on the base space, not necessarily the canonical one.
After rescaling by \(R^2\) and changing \(r\) to \(\gamma = Rr\), the metrics become
\begin{gather*}
    R^2 \dom{g}(\dom{b}) = R^2 \pi^*g_{\dom{B}} \oplus d\gamma^2 \oplus \gamma^2 g_{\dom{L}}(\dom{b}) + o_2(\gamma^2)\\
    R^2 g(b) = R^2 \pi^*g_B \oplus d\gamma^2 \oplus \gamma^2 g_L(b) + o_2(\gamma^2)
\end{gather*}
Intuitively, if the conical neighborhoods \(\dom{U}\) and \(U\) are made to be very small, then the metric on the base is nearly constant and the \(o_2(\gamma^2)\) error terms should be negligible.
Thus, it is natural to compare the original metrics with the following new metrics, defined for every \(R > 0\) by
\begin{gather*}
    \dom{g}_{\dom{b}_0}^R(\dom{b}) = R^2 \pi^*g_{\dom{B}}(\dom{b}_0) \oplus d\gamma^2 \oplus \gamma^2 g_{\dom{L}}(\dom{b}_0) \\
    g_{b_0}^R(b) = R^2 \pi^*g_B(b_0) \oplus d\gamma^2 \oplus \gamma^2 g_L(b_0)
\end{gather*}
It is also important to understand how the map \(F\) changes after transforming the radial coordinate.
If \(\gamma = Rr\), then
\begin{equation*}
    F(\dom{b},r,\dom{x}) = (\psi(\dom{b}), R\varphi(\dom{b},\gamma/R), f(\dom{b},\dom{x})).
\end{equation*}
Recall that, by definition, \(\varphi(\dom{b}, r) = a(\dom{b}) \cdot r + o_2(r)\).
In the limit as \(R \to \infty\), \(F\) will take the form
\begin{equation*}
    \widetilde{F}(\dom{b}, \gamma, \dom{x}) = (\psi(\dom{b}), a(\dom{b}) \cdot \gamma, f(\dom{b},\dom{x}))
\end{equation*}
Let \(\widetilde{F}_{b_0}\) denote the map
\begin{equation*}
    \widetilde{F}_{\dom{b}_0}(\dom{b}, \gamma, \dom{x}) = (\psi(\dom{b}), a(\dom{b}_0) \cdot \gamma, f_{\dom{b}_0}(\dom{x}))
\end{equation*}
where \(f_{\dom{b}_0}(\dom{x}) \coloneqq f(\dom{b}_0, \dom{x})\)

Define
\begin{align*}
    V^R &\coloneqq (T(\R^\hdim \times \cone[\dom{\varepsilon}]{\dom{L}}), R^2 \dom{g}) \oplus (T(\R^\hdim \times \cone[\varepsilon]{L}), R^2 g) \\
    \widetilde{V}^R &\coloneqq (T(\R^\hdim \times \cone[\dom{\varepsilon}]{\dom{L}}), \dom{g}_{\dom{b}_0}^R) \oplus (\widetilde{F}_{\dom{b}_0}^*T(\R^\hdim \times \cone[\varepsilon]{L}), \widetilde{F}_{\dom{b}_0}^*g_{b_0}^R).
\end{align*}
We start by checking that the Dirac operator on \(S\widetilde{V}^R\) is self-adjoint.
Since the new metrics are now just the direct sum of a flat base metric with a constant vertical cone metric and because the vertical map on cones is independent of the base, self-adjointness will be an immediate consequence of the results of Section \ref{sec:self-adjointness over point}, as long as it can be shown that the hypotheses of Lemma \ref{lem:link Dirac operator estimate} still hold.

The scalar curvatures \(\Sc_{\dom{g}_{\dom{b}_0}^R}\) and \(\Sc_{g_{b_0}^R}\) are given by Corollary \ref{cor:scalar curvature of cone}.
By assumption
\begin{equation*}
    \Sc_{\hat{g}}(\dom{b}_0) \geq \onorm{dF}^2(\dom{b}_0) F^*\Sc_g(b_0).
\end{equation*}
Hence,
\begin{equation*}
    \Sc_{ R^2 \hat{g} }(\dom{b}_0) \geq \onorm{d(F|_{\pi^{-1}\dom{b}_0})}^2 (F|_{\pi^{-1}\dom{b}_0})^*\Sc_{ R^2 g }(b_0).
\end{equation*}
In the limit as \(R \to \infty\), Proposition \ref{prop:asymptotic scalar curvature estimate for family of cones} and \cite[Proposition~2.17]{Streil:2016} imply
\begin{equation}\label{eqn:scalar curvature comparison - reduction to model case}
    \Sc_{\dom{g}_{\dom{b}_0}^R} \geq \onorm{d(\widetilde{F}_{\dom{b}_0}|_{\pi^{-1}\dom{b}_0})}^2 (\widetilde{F}_{\dom{b}_0}|_{\pi^{-1}\dom{b}_0})^*\Sc_{g_{b_0}^R}.
\end{equation}
The proof that the curvature operator of \(g_{b_0}^R\) is nonnegative is similar, except now one should apply Remark \ref{rmk:asymptotic sectional curvature for family of cones}, \cite[Proposition~2.8]{Streil:2016}, and \cite[Lemma~2.15]{Streil:2016}.

In conclusion, the Dirac operator on \(S\widetilde{V}^R\) is self-adjoint.

The last step to proving self-adjointness in the general case is to relate the twisted Dirac operator on \(SV\) to the one on \(S\widetilde{V}^R\) and then apply the Kato-Rellich perturbation theorem.
By \cite[pp.~7-8]{Bourguignon:1992}, for every \(R\) there is a unitary \( \Upsilon^R \colon S\widetilde{V}^R \to SV^R\).
Combining Equation \eqref{eqn:new connection matrix coefficient} from Proposition \ref{prop:asymptotic scalar curvature estimate for family of cones} with \cite[Theorem~3.1]{Bourguignon:1992} shows that all error terms of \((\Upsilon^R)^{-1} D_{V^R} \Upsilon^R - D_{\widetilde{V}^R}\) must either have asymptotic order \(o(1/\gamma)\) or leading asymptotic order of the form \(c(\dom{b})/\gamma\), where \(c(\dom{b})\) is a smooth function on the base converging to 0 when \(\dom{b}\) approaches \(\dom{b}_0\). By the proof of Proposition \ref{prop:multiplication by 1/r is bounded}, multiplication by \(1/\gamma\) is a bounded function from \(H^1\) to \(L^2\).
Taking \(R\) to be very big and shrinking \(\dom{U}\) and \(U\) to be very small, one can force
\begin{equation*}
    \norm{\left[ (\Upsilon^R)^{-1} D_{V^R} \Upsilon^R - D_{\widetilde{V}^R} \right] \sigma}_{L^2} \leq a \norm{ \sigma }_{L^2}^2 + b \norm{ D_{\widetilde{V}^R}\sigma }_{L^2}^2
\end{equation*}
with \(b < 1\).
By the Kato-Rellich perturbation theorem, the twisted Dirac operator with respect to some rescaling by \(R^2\) will be self-adjoint.
This implies that the original twisted Dirac operator coming from the unmodified metrics must be self-adjoint as well.
\begin{remark}\label{rem:self-adjointness can be proven with weaker lower bound on scalar curvature}
    To deduce self-adjointness, it is enough to assume that \(\Sc_{\dom{M}} \geq \Sc_M + C\), where \(C\) is some constant.
    In fact, \(C\) can be replaced by any function that behaves like \(o(1/r^2)\) near singular points.
    Indeed, when rescaling the metrics by \(R^2\) and doing the coordinate change \(\gamma = Rr\), after taking \(R \to \infty\), the scalar curvature comparison \eqref{eqn:scalar curvature comparison - reduction to model case} will still hold.
    This observation will be important when proving self-adjointness for manifolds with iterated asymptotically conical singularities.
\end{remark}
\section{Scalar-mean curvature comparison for manifolds with FACS}\label{sec:scalar-mean curvature comparison results}
Using the results of Sections \ref{sec:manifolds with facs} and \ref{sec:fredholmness}, we prove our main theorem in the case of manifolds with fiberwise asymptotically conical singularities.
\begin{theorem}
    Let \((\dom{M}^{\tdim},\bnd{\dom{M}}^{\tdim-1},\dom{g})\) and \((M^{\tdim},\bnd{M}^{\tdim-1},g)\) be compact spin manifolds with FACS and smooth boundaries \(\bnd{\dom{M}}\), \(\bnd{M}\).
    Assume that the links of \(\dom{M}\) and \(M\) have dimension at least two.
    Suppose that the curvature operator of \(M\) and the second fundamental form on the boundary of \(M\) are nonnegative.
    Let \(F \colon \dom{M} \to M\) be an asymptotically conical map, with \(F_\partial\) denoting its restriction to the boundary.
    Assume that one of the following conditions holds:
    \begin{enumerate}[label=\((\arabic*)\)]
        \item \label{itm:identical boundaries} The boundaries of \(\dom{M}\) and \(M\) coincide, namely, \(\bnd{\dom{M}} = \bnd{M}\) and \(g|_{\bnd{\dom{M}}}\) equals \(g|_{\bnd{M}}\).
        \item \label{itm:nonvanishing euler characteristic} The Euler characteristic of \(\bnd{M}\) is nonzero and \(F\) has nonzero degree.
    \end{enumerate}
    If the following comparison conditions hold:
    \begin{enumerate}[label=\textnormal{(\roman*)}]
        \item \(\Sc_{\dom{g}} \geq \onorm{dF}^2 F^*\Sc_g\)
        \item \(H_{\dom{g}} \geq \onorm{d(F_\partial)} (F_\partial)^*H_g\)
    \end{enumerate}
    then, we conclude
    \begin{enumerate}[label=\textup{(\Roman*)}]
        \item \(\Sc_{\dom{g}} = \onorm{dF}^2 F^* \Sc_g\)
        \item \(H_{\dom{g}} = \onorm{d(F_\partial)} (F_\partial)^*H_g\)
    \end{enumerate}
    Moreover,
    \begin{enumerate}[label=\((\mathrm{\Roman*}^\prime)\)]
        \item \label{itm:covering map} If \(\Ric_g > 0\), then \(\onorm{dF} \equiv a\) for some constant \(a > 0\) and \(F \colon (\dom{M}, a \cdot \dom{g}) \to (M,g)\) is a Riemannian covering map.
        \item \label{itm:flat implies ricci flat} If \((M, g)\) is flat, then \((\dom{M}, \dom{g})\) is Ricci flat.
    \end{enumerate}
\end{theorem}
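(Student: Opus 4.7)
The plan is to combine the scalar--mean estimate of Corollary \ref{cor:scalar-mean curvature estimate with boundary condition} with the Fredholmness results of Section \ref{sec:fredholmness}, producing a non-trivial harmonic spinor via an index-dichotomy argument. Set $V = T\dom{M} \oplus F^*TM$, and let $D$ denote the twisted Dirac operator on $SV$. Let $D_B$ and $D_{B^\perp}$ be its self-adjoint Fredholm extensions under the absolute and relative boundary conditions, respectively. The proof hinges on producing a non-trivial element of $\ker D_B \cup \ker D_{B^\perp}$. Although each individual Fredholm index on $\dom{M}$ involves eta invariants of the link operators from the conical strata and is therefore inaccessible, their \emph{difference} is an APS-style boundary contribution supported on $\bnd{\dom{M}}$. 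Using the identification of Remark \ref{rem:boundary conditions in case of clifford bundle}, the boundary operator obtained by restricting $SV$ to the appropriate $\sqrt{-1}\cbar{\dom{\nu}}c(\nu)$-eigenspace becomes, up to a bounded perturbation, the Gauss--Bonnet operator on $\bnd{\dom{M}}$ coupled via $F_\partial$ to $T\bnd{M}$. In case (ii) this relative index equals, up to sign, $\chi(\bnd{M})\cdot\deg F$, and in case (i) it reduces to $\chi(\bnd{M})=\chi(\bnd{\dom{M}})$; either way it is non-zero, forcing one of $\ker D_B$, $\ker D_{B^\perp}$ to be non-trivial.

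Let $\sigma$ be the resulting harmonic spinor. Applying Corollary \ref{cor:scalar-mean curvature estimate with boundary condition} (or its relative analogue) yields
\begin{equation*}
0 \;\geq\; \int_{\dom{M}}\pnorm{\nabla\sigma}^2 \;+\; \int_{\dom{M}}\tfrac14\bigl(\Sc_{\dom{g}}-\onorm{\wedge^2 dF}\,F^*\Sc_g\bigr)\pnorm{\sigma}^2 \;+\; \int_{\bnd{\dom{M}}}\tfrac12\bigl(H_{\dom{g}}-\onorm{dF_\partial}\,F_\partial^*H_g\bigr)\pnorm{\sigma}^2.
\end{equation*}
Nonnegativity of the curvature operator on $M$ gives $F^*\Sc_g\geq 0$, and the pointwise inequality $\onorm{\wedge^2 dF}\leq\onorm{dF}^2$ combined with comparison (1) makes the bulk integrand nonnegative; comparison (2) controls the boundary integrand. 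Every term must vanish. Hence $\sigma$ is parallel (and therefore nowhere-zero on the smooth stratum), and the scalar- and mean-curvature inequalities are saturated pointwise, proving (I) and (II).

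For the rigidity statements, parallelness of $\sigma$ forces the twisted curvature endomorphism appearing in the Lichnerowicz identity to annihilate $\sigma$, and the Cauchy--Schwarz step in the proof of Theorem \ref{thm:scalar curvature estimate} becomes an equality. Under (I$'$) with $\Ric_g>0$, this equality constrains $dF$ so tightly that $F$ is pointwise conformal; positivity of $\Ric_g$ then forces the conformal factor to be a global constant $a>0$ and makes $F\colon(\dom{M},a\cdot\dom{g})\to(M,g)$ a local isometry, whence a covering by compactness. Under (II$'$), flatness of $g$ trivializes $F^*TM$, and the parallel section of $SV$ descends to a parallel spinor on $\dom{M}$, which is the classical obstruction yielding $\Ric_{\dom{g}}=0$ on the smooth stratum (extending across by continuity). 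The main obstacle throughout is the dichotomy step: individual indices are inaccessible due to singular eta terms, and the entire argument pivots on isolating the relative index as a boundary-local quantity so that the topological hypotheses $\chi(\bnd{M})\deg F\neq 0$ or $\chi(\bnd{M})\neq 0$ can do the work.
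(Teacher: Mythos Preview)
Your dichotomy step has a genuine gap. You propose to produce a harmonic spinor by showing that the relative index $\operatorname{ind}(D_B)-\operatorname{ind}(D_{B^\perp})$ is nonzero and equals (up to sign) $\chi(\bnd{M})\cdot\deg F$ in case~(ii) and $\chi(\bnd{M})$ in case~(i). But in case~(i) there is \emph{no} hypothesis that $\chi(\bnd{M})\neq 0$: the only assumption is that the boundaries coincide isometrically. So your argument collapses there. (There is also the issue that $D_B$ and $D_{B^\perp}$ are both self-adjoint, hence have Fredholm index zero; you would need to specify a compatible $\mathbb{Z}_2$-grading and argue that the boundary projectors respect it before any graded relative-index computation makes sense. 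You do not do this, and it is not automatic.)

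The paper's dichotomy is different and avoids index computations entirely. It splits on whether $D_B$ is \emph{invertible}. If $\ker D_B\neq 0$, apply Corollary~\ref{cor:scalar-mean curvature estimate with boundary condition} directly. If $D_B$ is invertible, one manufactures a harmonic section by hand: choose a section $\widetilde{\psi}$ whose boundary value lies in $B^\perp$ and is $\nabla^{\partial,\partial}$-parallel, then solve $D\psi=-D\widetilde{\psi}$ with $\psi\in\operdom(D_B)$ and set $\sigma=\psi+\widetilde{\psi}$. The point is that for such $\sigma$ the boundary term $\int_{\bnd{\dom{M}}}\langle D^{\partial,\partial}\sigma,\sigma\rangle$ in Proposition~\ref{prop:scalar-mean curvature estimate} vanishes, because $D^{\partial,\partial}$ swaps $B$ and $B^\perp$ and annihilates the $B^\perp$-part by construction. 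In case~(i) the required boundary section is simply the unit normal $\nu\in\Cl(TM)|_{\bnd{M}}$ (available precisely because the boundaries coincide; no Euler-characteristic input needed). In case~(ii) one uses $\chi(\bnd{M})\neq 0$ and Atiyah--Singer on the closed boundary to produce a nonzero element of $\ker D^{\partial,\partial}$, then projects to $B^\perp$ (or multiplies by $\cbar{\dom{\nu}}$ if the projection vanishes).

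Your sketch of $(\mathrm{I}')$ is also too thin: after establishing that $F$ is a pointwise homothety, the paper derives a PDE for $h=\onorm{dF}$ from the conformal change formulas for $\Sc$ and $H$, and then integrates $\dive(h^k\nabla h)$ over $\dom{M}$, checking explicitly that the contribution from the singular stratum vanishes because $\partial_r h=o(1/r)$ near the cones. This Stokes argument, not ``positivity of $\Ric_g$'' alone, is what pins down $h\equiv\text{const}$.
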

\begin{proof}
    \textbf{Case \ref{itm:identical boundaries}.}

    Let \(D_B\) denote the twisted Dirac operator with absolute boundary condition \(B\).

    If \(D_B\) is not invertible, then from the usual Fredholm theory, there exists some nonzero \(\sigma \in \operdom(D_B)\) such that \(D_B \sigma = 0\).
    By Corollay \ref{cor:scalar-mean curvature estimate with boundary condition},
    \begin{multline*}
        0 = \int_{\dom{M}} \pnorm{D_B \sigma}^2 \geq \int_{\dom{M}} \pnorm{\nabla \sigma}^2 + \int_{\dom{M}} \frac{1}{4}(\Sc_{\dom{g}} - \onorm{dF}^2 F^*\Sc_g) \pnorm{\sigma}^2\\
        + \int_{ \bnd{\dom{M}} } \frac{1}{2}(H_{\dom{g}} - \onorm{d(F_\partial)} (F_\partial)^*H_g)\pnorm{\sigma}^2.
    \end{multline*}
    The above implies that \(\sigma\) is a parallel section, and hence must be nowhere vanishing and have pointwise constant norm.
    The only way in which the right-hand side of the above inequality could be 0 is if \(\Sc_{\dom{g}} = \onorm{dF}^2 \Sc_g\) and \(H_{\dom{g}} = \onorm{d(F_\partial)} (F_\partial)^*H_g\).

    Suppose now that \(D_B\) is invertible.
    Since the metrics coincide on the boundary and the map \(F\) is just the identity, the spinor bundle \(S(TM \oplus TM)|_{\bnd{M}}\) is canonically isomorphic to \(\Cl(TM)|_{\bnd{M}}\).
    If \(\nu \in \Cl(TM)|_{\bnd{M}}\) denotes the interior unit normal, then \(\nu\) satisfies the relative boundary condition and \(\conn[\partial, \partial] \nu = 0\) (see Remark \ref{rem:boundary conditions in case of clifford bundle}.)

    Extend \(\nu\) to a smooth section of \(\Cl(TM)\) in any way.
    Call this extension \(\widetilde{\psi}\).
    Since \(D_B\) is invertible, there exists \(\psi \in \operdom(D_B)\) such that \(D\psi = -D\widetilde{\psi}\).
    If we set \(\sigma = \psi + \widetilde{\psi}\), then \(D\sigma = 0\) and hence
    \begin{multline*}
        0 = \int_{\dom{M}} \pnorm{D\sigma}^2 \geq \int_{\dom{M}} \pnorm{\nabla \sigma}^2 + \int_{\dom{M}} \frac{1}{4}(\Sc_{\dom{g}} - \onorm{dF}^2 F^*\Sc_g) \pnorm{\sigma}^2 \\
        \int_{ \bnd{\dom{M}} } \left< D^{\partial, \partial} \sigma,\sigma \right> + \int_{ \bnd{\dom{M}} } \frac{1}{2}(H_{\dom{g}} - \onorm{d(F_\partial)} (F_\partial)^*H_g)\pnorm{\sigma}^2.
    \end{multline*}
    If the term with the boundary Dirac operator vanishes, then the conclusion will follow just as in the previous case.
    To this end, observe that
    \begin{align*}
        \int_{ \bnd{\dom{M}} } \left< D^{\partial,\partial} \sigma,\sigma \right> &= \int_{ \bnd{\dom{M}} } \left< D^{\partial,\partial} (\psi|_{\bnd{M}} + \nu), \psi|_{\bnd{M}} + \nu \right>\\
        &= \int_{ \bnd{\dom{M}} } \left< D^{\partial,\partial} \psi|_{\bnd{M}}, \nu \right>,\, \parbox[t]{4cm}{since \(D^{\partial,\partial}\) maps \(B\) to \(B^{\perp}\) and \(\conn[\partial, \partial] \nu\) = 0}\\
        &= \int_{ \bnd{\dom{M}} } \left< \psi|_{\bnd{M}}, D^{\partial,\partial} \nu \right> = 0.
    \end{align*}

    \textbf{Case \ref{itm:nonvanishing euler characteristic}.}

    The argument is similar to the previous case.
    Again, let \(D_B\) denote the twisted Dirac operator with absolute boundary condition \(B\).

    If \(D_B\) is not invertible, then the same proof as in Case \ref{itm:identical boundaries} establishes the scalar and mean curvature equalities.

    Suppose now that \(D_B\) is invertible.
    Since \(\chi(\bnd{M}) \ne 0\), the classical Atiyah-Singer index theorem applied to the boundary shows that there exists a section \(\widetilde{\Psi}_{\partial} \in S(T\dom{M} \oplus F^*TM)|_{\bnd{\dom{M}}}\) in the kernel of \(D^{\partial, \partial}\).
    If the projection of \(\widetilde{\Psi}_{\partial}\) onto \(B^{\perp}\) does not vanish, let \(\widetilde{\psi}_{\partial}\) be this projection and extend \(\widetilde{\psi}_{\partial}\) to a smooth section \(\widetilde{\psi}\) of the interior.
    Then, proceed as in Case \ref{itm:identical boundaries}.
    If the projection onto \(B^{\perp}\) vanishes, then \(\widetilde{\Psi}_{\partial}\) is absolute, which means that \(\widetilde{\psi}_{\partial} \coloneqq \cbar{\dom{\nu}} \widetilde{\Psi}_{\partial}\) is relative.
    Extend \(\widetilde{\psi}_\partial\) to a smooth section \(\widetilde{\psi}\) of the interior and proceed just as before.

    For statements \ref{itm:covering map} and \ref{itm:flat implies ricci flat}, the proofs are similar to that of \cite[p.~98]{Wang:2023}.
    We present them here for completeness.

    To prove \ref{itm:covering map}, we first prove that \(F\) is a homothety.
    Start by taking local orthonormal bases \(\{\dom{e}_i\}\) of \(\dom{M}\) and \(\{e_i\}\) of \(M\) such that \(F_* \dom{e}_i = \mu_i e_i\), \(\mu_i \geq 0\).
    The proof of scalar-mean curvature extremality combined with the proof of Theorem \ref{thm:scalar curvature estimate} shows not only that \(\dom{\Sc}_{\dom{g}} = \onorm{dF}^2\Sc_g\), but also
    \begin{equation*}
        \sum_{i,j} \Big(\onorm{dF}^2 - \frac{\mu_i^2\mu_j^2}{\onorm{\wedge^2 dF}}\Big)\curvtens[M](e_i,e_j,e_j,e_i) = 0.
    \end{equation*}
    On the other hand, \(\Ric_g > 0\) means that for every \(i\), there is some \(j\) for which \(\curvtens[M](e_i,e_j,e_j,e_i) > 0\).
    This implies that for every \(i\), there is some \(j\) such that
    \begin{equation*}
        \onorm{dF}^2 - \frac{\mu_i^2\mu_j^2}{\onorm{\wedge^2 dF}} = 0
    \end{equation*}
    and hence \(\onorm{dF}^2 = \onorm{\wedge^2 dF}\) and \(\mu_i = \onorm{dF}\) for every \(i\).
    Thus, \(F\) is a homothety.

    Define \(h \coloneqq \onorm{dF}\) and let \(U\) denote the open set \(\{h > 0\}\).
    Since \(F\) is a homothety, \(F^*g = h^2\dom{g}\) and hence on \(U\)
    \begin{equation*}
        F^*\Sc_g = \frac{\Sc_{\dom{g}}}{h^2} - \frac{2(n-1)}{h^3}\Delta h - \frac{(n-1)n}{h^4}\pnorm{dh}^2.
    \end{equation*}
    We have already shown \(\Sc_{\dom{g}} = h^2 F^*\Sc_g\), so
    \begin{equation*}
        \frac{2}{h^3}\Delta h = -\frac{n}{h^4}\pnorm{dh}^2
    \end{equation*}
    on \(U\) and hence
    \begin{equation*}
        2h^k\Delta h = -nh^{k-1}\pnorm{dh}^2
    \end{equation*}
    on \(M\), for all \(k \geq 2\).
    Additionally, on \(U \cap \bnd{\dom{M}}\),
    \begin{equation*}
        F^*H_g = \frac{H_{\dom{g}}}{h} - (n-1) \frac{dh(\dom{\nu})}{h^2}
    \end{equation*}
    where \(\dom{\nu}\) is the unit inner normal of \(\bnd{\dom{M}}\).
    It was shown that \(H_{\dom{g}} = \onorm{d(F_\partial)} (F_\partial)^*H_g\), but \(F\) is a homothety, so actually \(\dom{H}_{\dom{g}} = \onorm{dF} (F_\partial)^*H_g\).
    Since \(H_{\dom{g}} = hF^*H_g\), we must have \(dh(\dom{\nu}) = 0\) on \(U \cap \bnd{\dom{M}}\).
    Since \(h\) is \(0\) outside of \(U\), the equality \(dh(\dom{\nu}) = 0\) holds on \(\bnd{\dom{M}}\).

    By our assumption that \(F\) is asymptotically conical,
    \begin{equation*}
        F(\dom{b},r,\dom{x}) = (\psi(\dom{b}),\varphi(\dom{b},r),f(\dom{b},\dom{x}))
    \end{equation*}
    after trivializing in conical neighborhoods of the domain and target, where \(\varphi\) is of the form \(\varphi(\dom{b},r) = a(\dom{b}) \cdot r + o_2(r)\).
    This implies \(\varphi(\dom{b},r)'' = o(1/r)\) and \((\varphi(\dom{b},r) \onorm{df}/r)' = o(1/r)\).
    We conclude that \(h = O(1)\) and \(\frac{\partial h}{\partial r} = o(1/r)\) in a conical neighborhood.
    If now \(r(\dom{p}) \coloneqq \dist(\dom{p},\dom{\cat{C}})\), where \(\dom{\cat{C}}\) is the singular stratum of \(\dom{M}\), then
    \begin{equation*}\int_{r=\varepsilon} dh(\nabla r) h^k \dvol_{\dom{M}}=o(1/\varepsilon)\cdot \varepsilon^\ldim=o(\varepsilon^{\ldim-1})\to 0.\end{equation*}
    In particular, if Stokes's theorem is applied to \(\dive (h^k \nabla h)\), then the extra term coming from the singular stratum vanishes.
    Thus,
    \begin{align*}
        0 = \int_{\dom{M}} \dive (h^k \nabla h) &= \int_{\dom{M}} h^k \Delta h + \int_{\dom{M}} \inner{d(h^k), dh} \\
        &= -\frac{n}{2}\int_{\dom{M}} h^{k-1}\pnorm{dh}^2 + k\int_{\dom{M}} h^{k-1}\pnorm{dh}^2
    \end{align*}
    for all \(k \geq 2\).
    Taking \(k > \frac{n}{2}\) shows that \(\pnorm{dh} \equiv 0\) on \(\dom{M}\), so \(\norm{dF} = h\) is a constant.

    To prove \ref{itm:flat implies ricci flat}, recall that the proof of extremality showed that there exists a parallel section \(\sigma\).
    Furthermore, since \(M\) is flat, the curvature tensor of \(S(T\dom{M} \oplus F^*TM)\) depends only on the curvatures of \(\dom{M}\).
    Thus, if \(\{\dom{e}_i\}\) is an orthonormal basis of \(\dom{M}\), then
    \begin{align*}
        0 = \sum_{i} \dom{c}(\dom{e}_i) \curvtens[S(T\dom{M} \oplus F^*TM)]_{\dom{e}_i, \dom{e}_j} \sigma &= \frac{1}{4}\sum_{i,k,l} \curvtens[\dom{M}]_{ijkl} \dom{c}(\dom{e}_i)\dom{c}(\dom{e}_k)\dom{c}(\dom{e}_l) \sigma \\
        &= \frac{1}{4}\underbrace{\sum_{\substack{i,k,l \\ \text{distinct}}} \curvtens[\dom{M}]_{ijkl} \dom{c}(\dom{e}_i)\dom{c}(\dom{e}_k)\dom{c}(\dom{e}_l) \sigma}_{=0 \text{ by the Bianchi identity}} + \frac{1}{2} \sum_{i,l} \curvtens[\dom{M}]_{jiil} \dom{c}(\dom{e}_l) \sigma\\
        &= \frac{1}{2} c\big(\Ric_{\dom{g}}(\dom{e}_j)\big)\sigma
    \end{align*}
    for all \(j\).
    This implies \(\Ric_{\dom{g}} = 0\).
\end{proof}

\section{Manifolds with iterated asymptotically conical singularities}\label{sec:manifolds with iacs}
In this section, we generalize the notion of a manifold with fiberwise asymptotically conical singularities (FACS) to that of a manifold with iterated asymptotically conical singularities (IACS).
In particular, the notion of a \(\cat{C}_m\)-manifold will be defined.
In this new language, the manifolds with FACS, as defined in Definition \ref{def:FACS manifolds}, are the \(\cat{C}_1\)-manifolds.
After this, Fredholmness of the twisted Dirac operator in this more general setting will be established by generalizing the results of Section \ref{sec:fredholmness}.
Lastly, we will state Theorem \ref{thm:intro-scalar curvature comparison} precisely using our language of \(\cat{C}_m\)-manifolds.

\subsection{Geometric setup}
The \(\cat{C}_0\)-manifolds are defined to be the smooth manifolds, and a \(\cat{C}_0\)-map is just a smooth map between smooth manifolds.

The \(\cat{C}_m\)-manifolds are defined inductively.
\begin{definition}
    Let \(m \geq 1\) be an integer and \(M\) a topological space.
    Suppose that the \(\cat{C}_0\), ..., \(\cat{C}_{m-1}\)-manifolds and the notions of a \(\cat{C}_0\), ..., \(\cat{C}_{m-1}\)-map have been defined.

    We call a neighborhood \(U\) a \textit{\(\cat{C}_m\)-neighborhood} of \(p \in M\) if \(U\) is homeomorphic to a direct product of \(\R^\hdim\) with the cone \(\cone[\varepsilon]{L}\), where \(L\) is a \(\cat{C}_{m-1}\)-manifold without boundary, for which \(p\) is taken to a point of \(\R^\hdim \times \{0\}\) under the homeomorphism.
    The point \(p\) is a called a \textit{\(\cat{C}_m\)-singular point} of \(M\).

    A topological space \(M\) is said to be an \textit{\(\tdim\)-dimensional \(\cat{C}_m\)-manifold} if the following conditions hold:
    \begin{enumerate}[label=(\alph*)]
        \item \emph{Topology:} There is an open cover of \(M\) for which every open set in the cover is either homeomorphic to \(\R^\tdim\) or is a \(\cat{C}_k\)-neighborhood of some \(\cat{C}_k\)-singular point of \(M\), with \(k \leq m\).
        The transition functions between two nontrivially intersecting neighborhoods \(U\) and \(V\), both homeomorphic to \(\R^\tdim\), are required to be smooth.
        The transition functions between two nontrivially intersecting \(\cat{C}_k\)-neighborhoods \(U \cong \R^\hdim \times \cone[\varepsilon]{L}\) and \(V \cong \R^\hdim \times \cone[\varepsilon']{L'}\) are required to take the form:
        \begin{equation*}
            F(b,r,x) = (\psi(b), \varphi(b,r), f(b,x))
        \end{equation*}
        where \(b\) represents a point on the base space, \(r\) corresponds to the radial coordinate, and \(x\) is an element of the link.
        The functions \(\psi\) and \(\varphi\) are required to be smooth away from \(r=0\) and \(\varphi\) is assumed to take the form \(\varphi(b,r) = a(b) \cdot r + o_2(r)\) near \(r=0\), with \(a(b) > 0\) for all \(b\).
        The function \(f\) is required to be a \(\cat{C}_{m-1}\)-map.

        The \textit{\(\cat{C}_k\)-stratum} is the set of \(\cat{C}_k\)-singular points of \(M\).
        This set is denoted by \(\cat{C}_k\).
        The singular stratum is denoted by \(\cat{C}\) and is defined to be \(\cat{C} \coloneqq \cup_{k=1}^m \cat{C}_k\).
        The smooth stratum, or \(\cat{C}_0\)-stratum, is the complement \(M \setminus \cat{C}\) and is denoted by \(\cat{C}_0\).
        \item \emph{Geometry:} For each \(\cat{C}_k \neq \emptyset\) there is a metric \(g_{\cat{C}_k}\) on \(\cat{C}_k\).
        There is also a metric \(g\) defined on the smooth stratum of \(M\) such that in a \(\cat{C}_k\)-neighborhood \(U = \R^\hdim \times \cone[\varepsilon]{L}\) of a \(\cat{C}_k\)-singular point \(p\) the metric \(g\) takes form
        \begin{equation*}
            g|_U = \pi^*g_{\cat{C}_k}|_{\pi(U)} \oplus \beta'(b,r)^2 dr^2 \oplus \beta(b,r)^2 g_L(b) + o_2(r^2)
        \end{equation*}
        subject to the following criteria:
        \begin{enumerate}
            \item \(\pi^*g_{\cat{C}_k}|_{\pi(U)}\) is the pullback of the metric on the \(\cat{C}_k\)-stratum under the projection \(\pi \colon U \to \R^\hdim\).
            \item \(g_L(b)\) is a family of metrics on \(L\) such that the product \(\cat{C}_k|_{\pi(U)} \times L\) equipped with \(\pi^*g_{\cat{C}_k}|_{\pi(U)} \oplus g_L\) is a \(\cat{C}_{k-1}\)-manifold.
            \item \(\beta(b,r)\) is a real-valued function that is smooth away from \(r=0\) and is of the form \(\beta(b,r) = a(b) \cdot r + o_2(r)\) near \(r=0\), with \(a(b) > 0\) for all \(b\).
        \end{enumerate}
        To ease the notation, we will often write \(\pi^*g_B\) to denote the pullback of the base metric, instead of \(\pi^*g_{\cat{C}_k}|_{\pi(U)}\).
\end{enumerate}
\end{definition}
If \(M\) is a \(\cat{C}_m\)-manifold for some \(m\), then \(M\) is referred to as a \textit{manifold with iterated asymptotically conical singularities}, sometimes shortened to ``manifold with IACS" or just ``IACS manifold."

The same remarks following Definition \ref{def:FACS manifolds} apply.
We summarize them here.
In a conical neighborhood, after a reparametrization, the metric becomes a Riemannian submersion of a family of cones over the base, up to asymptotic error terms.
Two \(\cat{C}_k\)-singular points in the same connected component of \(\cat{C}_k\) must have the same link.
The notion of a manifold with IACS can naturally be extended to that of an \textit{IACS manifold with IACS boundary.}
The singular stratum \(\cat{C}\) may depend on the covering atlas.

We now define a natural class of maps between \(\cat{C}_m\)-manifolds.
Just as before, the definition will be inductive.
\begin{definition}\label{def:Cm map}
    Let \(m \geq 1\) be an integer and suppose the notions of a \(\cat{C}_0\), ..., \(\cat{C}_{m-1}\)-map have been defined.

    If \(\dom{M}\) and \(M\) are \(\cat{C}_m\)-manifolds, a map \(F\) is said to be a \(\cat{C}_m\)-map if
    \begin{enumerate}
        \item \(F\) maps \(\cat{C}_k\)-singular points to \(\cat{C}_k\)-singular points.
        \item \(F\) is smooth when restricted to the smooth stratum.
        \item If \(F\) maps a \(\cat{C}_k\)-neighborhood \(\dom{U} = \R^{\dom{\hdim}} \times \cone[\dom{\varepsilon}]{\dom{L}}\) into a \(\cat{C}_k\)-neighborhood \(U \cong \R^\hdim \times \cone[\varepsilon]{L}\), then \(\dom{\hdim} = h\), \(\dim \dom{L} = \dim L\), and in the trivialization the map \(F\) takes the form
            \begin{equation*}
                F(\dom{b},r,\dom{x}) = (\psi(\dom{b}), \varphi(\dom{b},r), f(\dom{b},\dom{x}))
            \end{equation*}
        where \(\dom{b}\) represents a point on the base space, \(r\) corresponds to the radial coordinate, and \(\dom{x}\) is an element of the link.
        The functions \(\psi\) and \(\varphi\) are required to be smooth away from \(r=0\) and \(\varphi\) is assumed to take the form \(\varphi(\dom{b},r) = a(\dom{b}) \cdot r + o_2(r)\) near \(r=0\), where \(a(\dom{b})>0\) for all \(\dom{b}\).
        The function \(f\) is required to be a \(\cat{C}_{k-1}\)-map.
    \end{enumerate}
\end{definition}

\subsection{Functional analytic preliminaries}\label{subsec:functional analytic preliminaries - iacs manifolds}
To prove the theorem, the Fredholmness of the twisted Dirac operator needs to be established first.
This will be deduced just as in Section \ref{sec:fredholmness}, by first establishing self-adjointness and then using the compactness of the inclusion of \(H^1\) into \(L^2\) to obtain a compact resolvent.
To this end, we generalize the results of Section \ref{subsec:functional analytic preliminaries}.

The notation used in this subsection is as follows:
\begin{definition} \mbox{}
    Let \(E\) denote a Hermitian vector bundle on an IACS manifold \(M\), possibly having a smooth boundary \(\bnd{M}\), with Hermitian connection.
    \begin{enumerate}
        \item \(C_c^\infty(M, E)\) is defined to be the set of smooth compactly supported sections of \(E\) vanishing near the singular stratum.
        \item \(H^1(M, E)\) is defined to be the completion of \(C_c^\infty(M, E)\) with respect to the norm
        \begin{equation*}
            \norm{\varphi}_{H^1}\coloneqq (\norm{\varphi}_{L^2}^2+\norm{\nabla\varphi}_{L^2}^2)^{1/2}
        \end{equation*}
    \end{enumerate}
\end{definition}

The first proposition of this subsection generalizes Proposition \ref{prop:multiplication by 1/r is bounded}.
\begin{proposition}\label{prop:multiplication by 1/r is bounded - iacs manifolds}
    Let \(E\) be a Hermitian vector bundle with Hermitian connection over a compact manifold \(M\) with IACS.
    Let \(d_\cat{C}(p) \coloneqq \dist(p, \cat{C})\) denote the distance from \(p\) to the closest singular point.
    If the links of \(M\) have dimension at least two, then multiplication by \(1/d_\cat{C}\) defines a bounded linear operator from \(H^1(M, E)\) to \(L^2(M, E) \).
    Moreover, the operator norm is bounded above by a constant that depends only on the geometry and topology of \(M\).
\end{proposition}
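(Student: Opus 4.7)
The plan is to induct on the smallest $m$ for which $M$ is a $\cat{C}_m$-manifold. The base case $m=1$ is exactly Proposition \ref{prop:multiplication by 1/r is bounded}. For the inductive step, assume the statement for all compact $\cat{C}_{m-1}$-manifolds. By a partition of unity, localization, and the reduction to the product cone model that appears in the proof of Proposition \ref{prop:multiplication by 1/r is bounded}, it suffices to bound $\int |\sigma|^2/d_{\cat{C}}^2\,\dvol[M]$ when $\sigma$ is smooth, compactly supported, vanishes near $\cat{C}$, and is supported in a single $\cat{C}_k$-neighborhood $U \cong \R^\hdim \times \cone[\varepsilon]{L}$ carrying the product metric $\pi^* g_B \oplus dr^2 \oplus r^2 g_L(b)$, where $L$ is a $\cat{C}_{k-1}$-manifold with $k \leq m$.

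The geometric key is that inside $U$ the singular stratum $\cat{C}$ is the union of the tip $\R^\hdim \times \{0\}$ and the iterated stratum $\R^\hdim \times \cone[\varepsilon]{\cat{C}_L}$, where $\cat{C}_L$ denotes the singular stratum of $L$. A direct computation in the cone metric shows that the distance from $(b,r,x)$ to $\cat{C}$ is bounded below by a constant multiple of $r \cdot \min\bigl(1,\, d_{\cat{C}_L}(x)\bigr)$, which yields the pointwise estimate
\begin{equation*}
    \frac{1}{d_{\cat{C}}(b,r,x)^2} \leq C \left( \frac{1}{r^2} + \frac{1}{r^2\, d_{\cat{C}_L}(x)^2} \right).
\end{equation*}
The first piece $\int |\sigma|^2/r^2 \,\dvol[M]$ is controlled by Proposition \ref{prop:multiplication by 1/r is bounded} itself, whose radial integration-by-parts argument uses only that $\dim L \geq 2$ and is insensitive to the singular structure of $L$.

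For the second piece, apply the induction hypothesis slicewise on $L$. Since $\sigma$ vanishes near $\cat{C} \supseteq \R^\hdim \times \cone[\varepsilon]{\cat{C}_L}$, its restriction to each slice $\{(b,r)\} \times L$ vanishes near $\cat{C}_L$, so the hypothesis (applied to $L$ with the Hermitian bundle obtained by restriction) gives
\begin{equation*}
    \int_L \frac{|\sigma|^2}{d_{\cat{C}_L}^2}\,\dvol[g_L] \leq C\left( \int_L |\sigma|^2\,\dvol[g_L] + \int_L |\nabla^L \sigma|_{g_L}^2\,\dvol[g_L] \right).
\end{equation*}
Using the scalings $r^{\ell}\,\dvol[g_L] = \dvol[r^2 g_L]$ and $|\nabla^L\sigma|_{g_L}^2 = r^2 |\nabla^{r^2 g_L}\sigma|^2$ where $\ell = \dim L$, and integrating over $B \times (0,\varepsilon)$, the second piece is bounded by $C\bigl(\|\sigma/r\|_{L^2(M)}^2 + \|\nabla\sigma\|_{L^2(M)}^2\bigr)$. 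The first term is controlled by one more invocation of Proposition \ref{prop:multiplication by 1/r is bounded}, and the second is part of $\|\sigma\|_{H^1}^2$, closing the induction. The main technical obstacle is ensuring that the constant produced by the induction hypothesis is uniform as $(b,r)$ varies; this follows from smooth dependence of $g_L$ on $b$ together with precompactness of the parameter region in which we localize.
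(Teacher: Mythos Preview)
Your proposal is correct and follows essentially the same approach as the paper: induct on $m$, localize to a model $\cat{C}_k$-neighborhood, use the Lipschitz comparison $d_\cat{C}(b,r,x) \sim r\cdot d_{\cat{C}_L}(x)$, apply the inductive hypothesis slicewise on $L$, and close with the radial integration-by-parts from Proposition \ref{prop:multiplication by 1/r is bounded}. The paper phrases the geometric comparison as a single Lipschitz equivalence rather than your two-term pointwise bound, but since $L$ is compact (so $d_{\cat{C}_L}$ is bounded) these are equivalent, and the remaining steps match line by line, including the observation that the slicewise constant $C(b)$ is uniform by precompactness of the base.
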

\begin{proof}
    We prove this by induction.
    This was proven in Proposition \ref{prop:multiplication by 1/r is bounded} for \(\cat{C}_1\)-manifolds.
    Suppose it holds for \(\cat{C}_1\)-manifolds, ..., \(\cat{C}_{m-1}\)-manifolds.
    To prove the statement for \(\cat{C}_m\)-manifolds it is sufficient to prove the statement in a \(\cat{C}_k\)-neighborhood of a \(\cat{C}_k\)-singular point for \(1 \leq k \leq m\).
    After localizing and accounting for error terms, the proof reduces to the case where \(M = \R^\hdim \times \cone{L}\) with model metric \(g = \pi^*g_B \oplus dr^2 \oplus r^2 g_L(b)\).
    Here, \(L\) is an \(\ldim\)-dimensional \(\cat{C}_k\)-manifold with \(k \leq m-1\).
    
    In the model case, \(d_\cat{C}(p) = \dist(p, \cat{C})\) is Lipschitz equivalent to \(\tilde{d}_\cat{C}(b,r,x) = r \dist(x, \cat{C}_{L(b)})\), where \(\dist(x, \cat{C}_{L(b)})\) denotes the closest distance from \(x\) to a singular point of the link \(L\) with respect to the metric \(g_L(b)\).
    Thus, it suffices to show that multiplication by \(1/\tilde{d}_\cat{C}\) is bounded.
    Let \(\sigma\) be a smooth compactly supported section vanishing near the singular stratum.
    Using the inductive hypothesis,
    \begin{align*}
        \norm{\frac{1}{\tilde{d}_\cat{C}} \sigma}_{L^2}^2 &= \int_B \int_0^\infty \int_L \frac{1}{r^2}\frac{1}{\dist(x, \cat{C}_{L(b)})^2} \pnorm{\sigma}^2 r^\ldim dvol_{L(b)} dr dvol_B \\
        & \leq \int_B \int_0^\infty \frac{C(b)}{r^2}\big(\norm{(\sigma(r)|_{L(b)})}_{L^2(L(b))}^2 + \norm{\nabla (\sigma(r)|_{L(b)})}_{L^2(L(b))}^2\big) r^\ldim dr dvol_B.
    \end{align*}
    We may assume that the \(C(b)\) are uniformly bounded, since otherwise we can just restrict to a smaller base.
    Next, observe that
    \begin{equation*}
        \int_B \int_0^\infty \frac{1}{r^2}\norm{(\sigma(r)|_{L(b)})}_{L^2(L(b))}^2 r^\ldim dr dvol_B \leq \frac{2}{l-1}\norm{\sigma}_{L^2(M)}^2
    \end{equation*}
    by essentially the same argument used in Proposition \ref{prop:multiplication by 1/r is bounded}.
    Finally,
    \begin{equation*}
        \int_B \int_0^\infty \frac{1}{r^2}\norm{\nabla (\sigma(r)|_{L(b)})}_{L^2(L(b))}^2 r^\ldim dr dvol_B \leq \norm{\nabla \sigma}_{L^2(M)}
    \end{equation*}
    by definition of \(\norm{\nabla \sigma}_{L^2(M)}\).

    Combining all of the above inequalities implies the conclusion.
\end{proof}

Next, we generalize Proposition \ref{prop:compactness theorem for facs manifolds}.
\begin{proposition}\label{prop:compactness theorem for iacs manifolds}
    Let \(E\) be a Hermitian vector bundle with Hermitian connection over a compact manifold \(M\) with IACS.
    If the links of \(M\) have dimension at least two, then the inclusion of \(H^1(M, E)\) into \(L^2(M, E)\) is compact.
\end{proposition}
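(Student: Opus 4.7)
The plan is to carry out the same three-step structure as in the proof of Proposition \ref{prop:compactness theorem for facs manifolds}: exhaust $M$ by compact smooth regions, apply classical Rellich--Kondrachov on each, and control the $L^{2}$ mass of an $H^{1}$-bounded sequence near the singular stratum uniformly. The crucial point is that the singular stratum $\cat{C} = \bigcup_{k=1}^{m}\cat{C}_{k}$ lives, as a whole, at positive codimension: thus if we set $K_{\rho} \coloneqq \{p \in M : d_{\cat{C}}(p) \geq \rho\}$, each $K_{\rho}$ stays away from all strata at once and is (after a harmless smooth thickening) a smooth compact Riemannian manifold with boundary, so the standard Rellich--Kondrachov theorem applies on each $K_{\rho}$.

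The substantive step is the smallness estimate: given an $H^{1}$-bounded sequence $\{\sigma_{k}\}$, show that $\|\sigma_{k}\|_{L^{2}(M \setminus K_{\rho})} \to 0$ uniformly in $k$ as $\rho \to 0$. Using a finite cover, it suffices to prove this in a conical neighborhood of a $\cat{C}_{k}$-singular point for each $1 \leq k \leq m$. Absorbing the $o_{2}(r^{2})$ error terms just as in the FACS proof reduces the problem to the model $M = \R^{\hdim} \times \cone{L}$ with product metric $\pi^{*}g_{B} \oplus dr^{2} \oplus r^{2}g_{L}(b)$, where $L$ is now a $\cat{C}_{k-1}$-manifold rather than a smooth manifold.

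The integration-by-parts chain from Proposition \ref{prop:compactness theorem for facs manifolds} then carries over verbatim and yields the same estimate $\iint_{0}^{\varepsilon}|\sigma_{k}|^{2} r^{\ldim} dr \leq C\varepsilon$, \emph{provided} the Hardy-type bound $\|r^{-1/2}\sigma\|_{L^{2}} \leq C\|\sigma\|_{H^{1}}$ still holds in the IACS setting. This is the only place where the argument must be modified, and it is exactly what Proposition \ref{prop:multiplication by 1/r is bounded - iacs manifolds} was set up to deliver: since the $\cat{C}_{k}$-stratum sits at $r = 0$, we have $d_{\cat{C}} \leq r$, so $\|r^{-1/2}\sigma\|_{L^{2}} \leq \|d_{\cat{C}}^{-1/2}\sigma\|_{L^{2}}$, and Cauchy--Schwarz together with Proposition \ref{prop:multiplication by 1/r is bounded - iacs manifolds} gives
\begin{equation*}
    \|d_{\cat{C}}^{-1/2}\sigma\|_{L^{2}}^{2} = \int_{M} |\sigma|^{2} d_{\cat{C}}^{-1} \leq \|\sigma\|_{L^{2}}\, \|d_{\cat{C}}^{-1}\sigma\|_{L^{2}} \leq C\,\|\sigma\|_{L^{2}}\|\sigma\|_{H^{1}}.
\end{equation*}

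With the $\varepsilon$-smallness estimate in hand, we conclude by the standard diagonal subsequence argument: take an exhaustion $\{K_{\rho_{n}}\}$ with $\rho_{n} \to 0$, and inductively extract nested subsequences $\{\sigma_{i}^{(n)}\}$ that are Cauchy in $L^{2}(K_{\rho_{n}}, E)$; the diagonal sequence $\{\sigma_{i}^{(i)}\}$ is then Cauchy on each $K_{\rho_{n}}$, and combining this with the uniform bound on $M \setminus K_{\rho_{n}}$ shows it is Cauchy in $L^{2}(M, E)$. The main obstacle was precisely the passage from a smooth link to an IACS link in the localized model, and since this difficulty is handled by the inductive Hardy inequality of Proposition \ref{prop:multiplication by 1/r is bounded - iacs manifolds}, no further induction on the depth $m$ is needed in the present compactness argument.
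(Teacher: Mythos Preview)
Your overall strategy of exhausting by the smooth sets $K_\rho = \{d_{\cat{C}} \geq \rho\}$ and applying classical Rellich--Kondrachov on each is fine, and it is more direct than the paper's route. However, the smallness argument has a genuine gap. In a $\cat{C}_k$-neighborhood $U \cong \R^\hdim \times \cone{L}$ with $k \geq 2$, the set $U \cap \{d_{\cat{C}} < \rho\}$ is \emph{not} contained in $\{r < \varepsilon(\rho)\}$: since $L$ itself carries singular strata, there are points with $r$ bounded away from zero but with the link coordinate $x$ close to $\cat{C}_L$. The integration-by-parts estimate you reproduce from Proposition~\ref{prop:compactness theorem for facs manifolds} only bounds $\int_{\{r < \varepsilon\}} |\sigma|^2$, so it misses the mass near the lower strata inside $U$. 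A finite cover by conical charts does not rescue this, because the parts of $\cat{C}_1,\dots,\cat{C}_{k-1}$ that accumulate on $\cat{C}_k$ cannot be covered by finitely many lower-depth charts.

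There is a much simpler fix, which in fact uses Proposition~\ref{prop:multiplication by 1/r is bounded - iacs manifolds} more efficiently than you do: on $\{d_{\cat{C}} < \rho\}$ one has $1 \leq \rho^2 d_{\cat{C}}^{-2}$, so
\[
\|\sigma\|_{L^2(\{d_{\cat{C}}<\rho\})}^2 \;\leq\; \rho^2 \int_M d_{\cat{C}}^{-2}|\sigma|^2 \;\leq\; C\rho^2\,\|\sigma\|_{H^1}^2,
\]
which is exactly the uniform smallness you need, with no localization, no integration by parts, and no induction. For comparison, the paper instead inducts on the depth $m$: it exhausts $M$ by compact $\cat{C}_{m-1}$-manifolds (staying away only from the top stratum $\cat{C}_m$), uses the FACS-style estimate on the complement $\{d_{\cat{C}_m}<\rho\}$ where it genuinely is $\{r<\varepsilon\}$, and then invokes the inductive hypothesis for compactness on each $\cat{C}_{m-1}$ piece of the exhaustion.
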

\begin{proof}
    The proposition will be proved by induction.
    For this, it is useful to consider the more general statement where \(M\) is allowed to have a boundary with IACS.
    The case of \(\cat{C}_1\)-manifolds with \(\cat{C}_1\)-boundary was shown in Proposition \ref{prop:compactness theorem for facs manifolds} (see Remark \ref{rem:compactness theorem for C_1-manifolds with C_1-boundary}).
    Suppose the statement has been proven for compact \(\cat{C}_1\)-manifolds, ..., compact \(\cat{C}_{m-1}\)-manifolds, possibly with \(\cat{C}_1\)-boundary, ..., \(\cat{C}_{m-1}\)-boundary, respectively.

    Given a \(\cat{C}_m\)-manifold, take an exhaustion by compact \(\cat{C}_{m-1}\)-manifolds with \(\cat{C}_{m-1}\)-boundary.
    An identical argument to the one given in Proposition \ref{prop:compactness theorem for facs manifolds} shows that if a sequence \(\{\sigma_k\}\) is uniformly bounded in \(H^1\)-norm, then the \(L^2\)-norms of the restrictions of the \(\sigma_k\)'s to the complement of the exhaustion become uniformly small.
    By the inductive hypothesis, we may apply the compactness theorem to each IACS manifold in the exhaustion.
    The proof is then finished off using the same subsequence argument that occurs in Proposition \ref{prop:compactness theorem for facs manifolds}.
\end{proof}

\subsection{Fredholmness}\label{subsec:fredholmness for iacs manifolds}
Let \((\dom{M}^n, \bnd{\dom{M}}^{n-1}, \dom{g})\) and \((M^n, \bnd{M}^{n-1},g)\) be \(\cat{C}_m\)-manifolds with smooth boundaries \(\bnd{\dom{M}}\) and \(\bnd{M}\).
Let \(F \colon \dom{M} \to M\) be a \(\cat{C}_m\)-map.
Suppose \((M,g)\) has nonnegative curvature operator and \(\Sc_{\dom{g}} \geq \norm{dF}^2 \Sc_g + C\), with \(C\) some constant.
We write \(V\) to denote the direct sum \(T\dom{M} \oplus F^*TM\).

Let \(D_B\) denote the twisted Dirac operator on \(SV\) with domain \(C_c^\infty(M, SV; B)\).
Observe that \(\operdom(D_B)_{min} = H^1(M, SV; B)\).
That the minimal domain contains \(H^1(M, SV; B)\) is clear.
To see the inclusion \(\operdom(D_B)_{min} \subseteq H^1(M, SV; B)\), notice first that the same proofs of the results of Section \ref{subsec:geometry of family of asymptotic cones} show that the curvature operators behave like \(1/d_\cat{C}^2\) near the singular strati.
Combining this with Corollary \ref{cor:scalar-mean curvature estimate with boundary condition} and Proposition \ref{prop:multiplication by 1/r is bounded - iacs manifolds} shows that
\begin{equation*}
    \norm{D \sigma}^2_{L^2} \geq \norm{\conn[] \sigma}^2 - C' \norm{\sigma}_{L^2}^2
\end{equation*}
where \(C'\) is independent of \(\sigma\).
In particular,
\begin{equation*}
    \norm{\sigma}_{H^1} \leq C'(\norm{\sigma}_{L^2} + \norm{D \sigma}_{L^2})
\end{equation*}
for some \(C'\).

We will show that under the above hypotheses
\begin{equation*}
    D_B \colon H^1(\dom{M}, SV; B) \to L^2(\dom{M}, SV)
\end{equation*}
is Fredholm.

By Proposition \ref{prop:compactness theorem for iacs manifolds}, the domain of \(D_B\) is compactly embedded into \(L^2\), so to establish Fredholmness it is enough to prove self-adjointness.
This will be done by induction.

The case of \(\cat{C}_1\)-manifolds was proven in Section \ref{subsec:self-adjointness in general case - facs manifolds} (see Remark \ref{rem:self-adjointness can be proven with weaker lower bound on scalar curvature}).
Suppose the statement has been proven for compact \(\cat{C}_1\)-manifolds, ..., compact \(\cat{C}_{m-1}\)-manifolds.
To prove the statement for compact \(\cat{C}_m\)-manifolds, just as in the proof for the case of \(\cat{C}_1\)-manifolds, it is enough to show self-adjointness in a neighborhood of every \(\cat{C}_k\)-singular point.

Let \(\dom{b}_0\) and \(b_0\) be \(\cat{C}_k\)-singular points of \(\dom{M}\) and \(M\) for which \(F(\dom{b}_0) = b_0\).
Take \(\cat{C}_k\)-neighborhoods \(\dom{U} \cong \R^\hdim \times \cone[\dom{\varepsilon}]{\dom{L}}\) and \(U \cong \R^\hdim \times \cone[\varepsilon]{L}\) of the points \(\dom{b}_0\) and \(b_0\) such that \(F(\dom{U}) \subseteq U\) and
\begin{equation*}
    F(\dom{b},r,\dom{x}) = (\psi(\dom{b}), \varphi(\dom{b},r), f(\dom{b},\dom{x}))
\end{equation*}
in the trivialization.
In the trivialization, the metrics \(\dom{g}\) and \(g\) take the form
\begin{gather*}
    \dom{g}(\dom{b}) = \pi^*g_{\dom{B}} \oplus dr^2 \oplus r^2 g_{\dom{L}}(\dom{b}) + o_2(r^2)\\
    g(b) = \pi^*g_B \oplus dr^2 \oplus r^2 g_L(b) + o_2(r^2)
\end{gather*}
where \(\dom{B} = \R^\hdim = B\), with \(g_{\dom{B}}\) and \(g_B\) some metrics on the base space, not necessarily the canonical one.
After rescaling by \(R^2\) and changing \(r\) to \(\gamma = Rr\), the metrics become
\begin{gather*}
    R^2 \dom{g}(\dom{b}) = R^2 \pi^*g_{\dom{B}} \oplus d\gamma^2 \oplus \gamma^2 g_{\dom{L}}(\dom{b}) + o_2(\gamma^2)\\
    R^2 g(b) = R^2 \pi^*g_B \oplus d\gamma^2 \oplus \gamma^2 g_L(b) + o_2(\gamma^2)
\end{gather*}
Under this change, the map \(F\) becomes
\begin{equation*}
    F(\dom{b},r,\dom{x}) = (\psi(\dom{b}), R\varphi(\dom{b},\gamma/R), f(\dom{b},\dom{x})).
\end{equation*}
Recall that, by definition, \(\varphi(\dom{b}, r) = a(\dom{b}) \cdot r + o_2(r)\).
In the limit as \(R \to \infty\), \(F\) will take the form
\begin{equation*}
    \widetilde{F}(\dom{b}, \gamma, \dom{x}) = (\psi(\dom{b}), a(\dom{b}) \cdot \gamma, f(\dom{b},\dom{x}))
\end{equation*}
Let \(\widetilde{F}_{b_0}\) denote the map
\begin{equation*}
    \widetilde{F}_{\dom{b}_0}(\dom{b}, \gamma, \dom{x}) = (\psi(\dom{b}), a(\dom{b}_0) \cdot \gamma, f_{\dom{b}_0}(\dom{x}))
\end{equation*}
where \(f_{\dom{b}_0}(\dom{x}) \coloneqq f(\dom{b}_0, \dom{x})\)

Consider the new metrics, defined for every \(R > 0\) by
\begin{gather*}
    \dom{g}_{\dom{b}_0}^R(\dom{b}) = R^2 \pi^*g_{\dom{B}}(\dom{b}_0) \oplus d\gamma^2 \oplus \gamma^2 g_{\dom{L}}(\dom{b}) \\
    g_{b_0}^R(b) = R^2 \pi^*g_B(b_0) \oplus d\gamma^2 \oplus \gamma^2 g_L(b)
\end{gather*}
Define
\begin{align*}
    V^R &\coloneqq (T(\R^\hdim \times \cone[\dom{\varepsilon}]{\dom{L}}), R^2 \dom{g}) \oplus (T(\R^\hdim \times \cone[\varepsilon]{L}), R^2 g) \\
    \widetilde{V}^R &\coloneqq (T(\R^\hdim \times \cone[\dom{\varepsilon}]{\dom{L}}), \dom{g}_{\dom{b}_0}^R) \oplus (\widetilde{F}_{\dom{b}_0}^*T(\R^\hdim \times \cone[\varepsilon]{L}), \widetilde{F}_{\dom{b}_0}^*g_{b_0}^R).
\end{align*}
As in Section \ref{subsec:self-adjointness in general case - facs manifolds}, it is enough to prove that the twisted Dirac operator on \(S\widetilde{V}^R\) is self-adjoint.
Indeed, the twisted Dirac operator on \(SV^R\) will differ from the twisted Dirac operator on \(S\widetilde{V}^R\) by error terms that have leading asymptotic order \(o(1/d_\cat{C})\) or leading asymptotic order of the form \(c(\dom{b})/d_\cat{C}\), where \(d_\cat{C}(p) = \dist(p, \cat{C})\) is the distance to the singular stratum and \(c(\dom{b})\) approaches 0 as \(\dom{b}\) approaches \(\dom{b}_0\).
By the proof of Proposition \ref{prop:multiplication by 1/r is bounded - iacs manifolds}, multiplication by \(1/d_\cat{C}\) is bounded.
Thus, by taking \(R\) big enough and \(\dom{U}\) and \(U\) to be sufficiently small, we can get the twisted Dirac operator on \(SV^R\) as close as we like to the twisted Dirac operator on \(S\widetilde{V}^R\).
(For the precise details, look at Section \ref{subsec:self-adjointness in general case - facs manifolds}.)
By the Kato-Rellich perturbation theorem, the twisted Dirac operator on \(SV^R\) is self-adjoint, which implies that the original twisted Dirac operator we started with must be self-adjoint.

Let us show that the twisted Dirac operator \(D_{\widetilde{V}^R}\) on \(S\widetilde{V}^R\) is self-adjoint.
The proof of \eqref{eqn:scalar curvature comparison - reduction to model case} and the discussion that comes after shows that
\begin{equation}\label{eqn:scalar curvature comparison - reduction to model case - iacs manifolds}
    \Sc_{\dom{g}_{\dom{b}_0}^R} \geq \onorm{d(\widetilde{F}_{\dom{b}_0}|_{\pi^{-1}\dom{b}_0})}^2 (\widetilde{F}_{\dom{b}_0}|_{\pi^{-1}\dom{b}_0})^*\Sc_{g_{b_0}^R}
\end{equation}
and that the curvature operator of \(L\) is nonnegative.
In Lemma \ref{lem:link Dirac operator estimate}, it was shown that \eqref{eqn:scalar curvature comparison - reduction to model case - iacs manifolds} implies
\begin{equation*}
    \Sc_{\dom{L}} \geq \onorm{df_{\dom{b}_0}}^2 (f_{\dom{b}_0})^*\Sc_L - \onorm{df_{\dom{b}_0}}^2 (n-1)(n-2) + (n-1)(n-2).
\end{equation*}
It is not hard to see that \(\onorm{df_{\dom{b}_0}}\) is bounded above by some constant.
By the induction hypothesis, the link Dirac operator \(D^{\partial, \partial}\) is Fredholm.
Combining Propositions \ref{prop:multiplication by 1/r is bounded - iacs manifolds} and \ref{prop:compactness theorem for iacs manifolds} with Remark \ref{rem:enough to know link dirac operator is Fredholm}, the twisted Dirac operator on \(S\widetilde{V}^R\) must be self-adjoint.

\subsection{Scalar-mean curvature comparison for IACS manifolds}
We now give the precise statement of Theorem \ref{thm:intro-scalar curvature comparison} using the language of \(\cat{C}_m\)-manifolds.

\begin{theorem}\label{thm:scalar-mean curvature comparison - manifolds with iacs}
    Let \((\dom{M}^{\tdim},\bnd{\dom{M}}^{\tdim-1},\dom{g})\) and \((M^{\tdim},\bnd{M}^{\tdim-1},g)\) be compact spin \(\cat{C}_m\)-manifolds with smooth boundaries \(\bnd{\dom{M}}\), \(\bnd{M}\).
    Assume that the links of \(\dom{M}\) and \(M\) have dimension at least two.
    Suppose that the curvature operator of \(M\) and the second fundamental form on the boundary of \(M\) are nonnegative.
    Let \(F \colon \dom{M} \to M\) be a \(\cat{C}_m\)-map, with \(F_\partial\) denoting its restriction to the boundary.
    Assume that one of the following conditions holds:
    \begin{enumerate}[label=\((\arabic*)\)]
        \item The boundaries of \(\dom{M}\) and \(M\) coincide, namely, \(\bnd{\dom{M}} = \bnd{M}\) and \(g|_{\bnd{\dom{M}}}\) equals \(g|_{\bnd{M}}\).
        \item The Euler characteristic of \(\bnd{M}\) is nonzero and \(F\) has nonzero degree.
    \end{enumerate}
    If the following comparison conditions hold:
    \begin{enumerate}[label=\textnormal{(\roman*)}]
        \item \(\Sc_{\dom{g}} \geq \onorm{dF}^2 F^*\Sc_g\)
        \item \(H_{\dom{g}} \geq \onorm{d(F_\partial)} (F_\partial)^*H_g\)
    \end{enumerate}
    then, we conclude
    \begin{enumerate}[label=\textup{(\Roman*)}]
        \item \(\Sc_{\dom{g}} = \onorm{dF}^2 F^* \Sc_g\)
        \item \(H_{\dom{g}} = \onorm{d(F_\partial)} (F_\partial)^*H_g\)
    \end{enumerate}
    Moreover,
    \begin{enumerate}[label=\((\mathrm{\Roman*}^\prime)\)]
        \item If \(\Ric_g > 0\), then \(\onorm{dF} \equiv a\) for some constant \(a > 0\) and \(F \colon (\dom{M}, a \cdot \dom{g}) \to (M,g)\) is a Riemannian covering map.
        \item If \((M, g)\) is flat, then \((\dom{M}, \dom{g})\) is Ricci flat.
    \end{enumerate}
\end{theorem}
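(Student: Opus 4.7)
The plan is to run essentially the same dichotomy argument used in Theorem \ref{thm:scalar-curvature comparison - manifolds with facs}, now invoking the Fredholmness of the twisted Dirac operator on $S(T\dom{M}\oplus F^*TM)$ with absolute boundary condition $B$ that was established inductively in Section \ref{subsec:fredholmness for iacs manifolds}. All the pointwise tools we need (the curvature estimate of Theorem \ref{thm:scalar curvature estimate}, the scalar--mean estimate of Proposition \ref{prop:scalar-mean curvature estimate} and its boundary-adapted version Corollary \ref{cor:scalar-mean curvature estimate with boundary condition}) are stated for smooth sections vanishing near the singular stratum, and so they apply verbatim on $\cat{C}_m$-manifolds once one knows that the corresponding Dirac operator is self-adjoint with compact resolvent.

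First I would form $D_B\colon H^1(\dom{M},SV;B)\to L^2(\dom{M},SV)$ and split into two cases exactly as before. If $D_B$ has nontrivial kernel, pick $0\neq\sigma\in\ker D_B$; Corollary \ref{cor:scalar-mean curvature estimate with boundary condition} together with comparison conditions (1) and (2) forces $\conn[]\sigma\equiv 0$, $\Sc_{\dom g}=\onorm{dF}^2F^*\Sc_g$, and $H_{\dom g}=\onorm{d(F_\partial)}(F_\partial)^*H_g$ pointwise, giving conclusions (I) and (II). If $D_B$ is invertible, I would produce a nontrivial harmonic spinor with prescribed boundary behavior by the same two subcases as in Theorem \ref{thm:scalar-curvature comparison - manifolds with facs}: under hypothesis (i), take $\widetilde\psi$ to extend the unit inner normal $\nu\in \Cl(TM)|_{\bnd M}$ (which is relative and satisfies $\conn[\partial,\partial]\nu=0$) and solve $D\psi=-D\widetilde\psi$, so that $\sigma=\psi+\widetilde\psi$ satisfies $D\sigma=0$ and the boundary Dirac term $\int_{\bnd{\dom M}}\langle D^{\partial,\partial}\sigma,\sigma\rangle$ vanishes by the same integration by parts; under hypothesis (ii), use the Atiyah--Singer index theorem on the smooth closed boundary $\bnd M$ to produce $\widetilde\Psi_\partial\in\ker D^{\partial,\partial}$, project onto $B^\perp$ (or multiply by $\dom c(\dom\nu)$ if the projection is zero), extend into $\dom M$, and repeat the same cancellation. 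Either way, Corollary \ref{cor:scalar-mean curvature estimate with boundary condition} forces a parallel nowhere-vanishing $\sigma$ together with (I) and (II).

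For the rigidity statements (I$^\prime$) and (II$^\prime$) I would copy the argument from the end of Section \ref{sec:scalar-mean curvature comparison results} verbatim. Choosing frames $F_*\dom e_i=\mu_i e_i$, the proof of Theorem \ref{thm:scalar curvature estimate} gives
\[
\sum_{i,j}\Big(\onorm{dF}^2-\frac{\mu_i^2\mu_j^2}{\onorm{\wedge^2 dF}}\Big)\curvtens[M](e_i,e_j,e_j,e_i)=0,
\]
so $\Ric_g>0$ forces all $\mu_i=\onorm{dF}=:h$, making $F$ a homothety. The scalar- and mean-curvature equalities then translate into $2h^k\Delta h=-nh^{k-1}|dh|^2$ on $\dom M$ and $dh(\dom\nu)=0$ on $\bnd{\dom M}$. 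The key new point in the IACS setting is to justify that the singular-stratum contribution in Stokes's theorem applied to $\dive(h^k\nabla h)$ vanishes; this follows because $F$ is a $\cat C_m$-map, so in an iterated conical chart $F(\dom b,r,\dom x)=(\psi(\dom b),\varphi(\dom b,r),f(\dom b,\dom x))$ with $\varphi(\dom b,r)=a(\dom b)r+o_2(r)$, which yields $h=O(1)$ and $\partial_r h=o(1/r)$ near every stratum. The induced boundary flux over the tube $\{d_{\cat C}=\varepsilon\}$ is then $o(\varepsilon^{\ldim-1})\to 0$ since $\ldim\geq 2$, and the same calculation as before forces $|dh|\equiv 0$ and hence covers $F$ by a Riemannian covering after the constant rescaling $h^2\dom g\to g$. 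Statement (II$^\prime$) follows from the parallel section $\sigma$ and the flatness of $M$ via the identity
\[
0=\sum_i \dom c(\dom e_i)\curvtens[S(T\dom M\oplus F^*TM)]_{\dom e_i,\dom e_j}\sigma=\tfrac12 c(\Ric_{\dom g}(\dom e_j))\sigma,
\]
giving $\Ric_{\dom g}=0$.

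The main obstacle is ensuring that every analytic manipulation that was legitimate for smooth compactly supported sections extends to elements of $\operdom(D_B)_{\min}=H^1(\dom M,SV;B)$ in the iterated setting; this is precisely why Section \ref{subsec:fredholmness for iacs manifolds} is organized so that (a) the Rellich--Kondrachov compactness of Proposition \ref{prop:compactness theorem for iacs manifolds} and (b) the inductive self-adjointness criterion using Remark \ref{rem:enough to know link dirac operator is Fredholm} together with the link-scalar-curvature lower bound derived in Section \ref{subsec:fredholmness for iacs manifolds} give the boundary-value problem a genuine Fredholm theory under hypothesis that the links have dimension at least two. Once that is in hand, the structure of the proof is formally identical to the FACS case.
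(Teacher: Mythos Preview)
Your proposal is correct and follows exactly the paper's approach: the paper's own proof of this theorem is a single sentence stating that once Fredholmness has been established (in Section \ref{subsec:fredholmness for iacs manifolds}), the argument is identical to the one given in Section \ref{sec:scalar-mean curvature comparison results}. Your write-up simply unpacks that sentence, and the extra care you take with the Stokes contribution from the iterated singular stratum in the proof of (I$^\prime$) is consistent with how the FACS argument handles it.
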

\begin{proof}
    Once Fredholmness has been established, the argument is exactly the same as the one presented in Section \ref{sec:scalar-mean curvature comparison results}.
\end{proof}

\section{Application of the main results}\label{sec:applications}
In this section, we apply the techniques established thus far to deduce some additional results regarding manifolds with iterated asymptotically conical singularities.

\subsection{Rigidity theorem for Euclidean domains}
We prove a rigidity theorem for Euclidean domains as a corollary of Theorem \ref{thm:scalar-mean curvature comparison - manifolds with iacs}, or rather its proof.
\begin{theorem}
    Let \((M^{\tdim},\bnd{M}^{\tdim-1},g)\) be a compact spin manifold with IACS.
    Suppose that the links of \(M\) have dimension at least two.
    Let \(F_\partial\colon\partial M\to\Sigma\) be a smooth map with nonzero degree, where \(\Sigma\) is a closed convex hypersurface in the Euclidean space $\R^n$.
    Assume that one of the following conditions holds:
    \begin{enumerate}[label=\((\roman*)\)]
        \item \(F_\partial\) is an isometry.
        \item \(n\) is odd.
    \end{enumerate}
    Let $H$ be the mean curvature of $\Sigma$. If \(\Sc_{g} \geq 0\) and \(H_{g} \geq \onorm{dF_\partial} H\), then \(g\) is flat.
\end{theorem}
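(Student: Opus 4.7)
The plan is to reduce to Theorem \ref{thm:scalar-mean curvature comparison - manifolds with iacs} by mapping $M$ into the Euclidean domain bounded by $\Sigma$, and then to upgrade the resulting Ricci-flatness to full flatness using the abundance of parallel spinors on the Euclidean target.

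First I would let $\Omega \subset \R^n$ denote the bounded component of $\R^n \setminus \Sigma$, equipped with the flat Euclidean metric, so that $(\closure{\Omega}, g_{\mathrm{euc}})$ is a compact smooth Riemannian manifold with boundary $\Sigma$, has vanishing curvature operator, and, by convexity of $\Sigma$, has nonnegative inward second fundamental form. Since $\closure{\Omega}$ is contractible, $F_\partial$ extends to a smooth map $F \colon M \to \closure{\Omega}$. The scalar curvature hypothesis of the main theorem reduces to $\Sc_g \geq 0 = \onorm{dF}^2 F^* \Sc_{\closure{\Omega}}$, and the mean curvature hypothesis is assumed. In case (i), the isometry $F_\partial$ identifies $\bnd{M}$ with $\Sigma$, placing us in condition (1) of the main comparison theorem. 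In case (ii), $\dim \Sigma = n - 1$ is even, so $\chi(\Sigma) = \chi(\sphe^{n-1}) = 2 \neq 0$, and $F$ inherits nonzero degree from $F_\partial$, placing us in condition (2). Applying Theorem \ref{thm:scalar-mean curvature comparison - manifolds with iacs} and invoking conclusion $(\mathrm{II}^\prime)$ then yields that $(M, g)$ is Ricci flat, and its proof produces a nonzero parallel section of $S(TM \oplus F^* T\closure{\Omega})$ from a chosen boundary seed.

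Next I would upgrade Ricci flatness to full flatness by leveraging the family of parallel spinors on the Euclidean target. Because $\closure{\Omega}$ is flat and simply connected, $S\closure{\Omega}$ is trivialized by a parallel frame; hence $F^* S\closure{\Omega}$ is trivialized by a parallel frame $\tilde s_1, \dots, \tilde s_N$ with $N = 2^{\lfloor n/2 \rfloor}$. Running the seed-and-solve construction of the main theorem with each $\tilde s_\alpha$ as the boundary seed should produce parallel sections $\sigma_1, \dots, \sigma_N$ of $S(TM \oplus F^* T\closure{\Omega})$ whose boundary values are pointwise linearly independent. Under the canonical splitting $S(TM \oplus F^* T\closure{\Omega}) \cong SM \,\widehat{\otimes}\, F^*S\closure{\Omega}$, the $\sigma_\alpha$'s correspond to parallel spinors on $SM$ that pointwise span each fiber, and the identity $R^{SM}_{X, Y} = \tfrac14 \sum_{k,l} \inner{R^{TM}_{X,Y} e_k, e_l}\, c(e_k) c(e_l)$ then forces $R^{TM} \equiv 0$ on the smooth stratum, whence $g$ is flat.

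The main obstacle will be verifying that these $\sigma_\alpha$'s are genuinely pointwise linearly independent, so that together they span $SM$. This requires a careful analysis of the invertibility dichotomy for the twisted Dirac operator with boundary condition $B$, showing that distinct parallel seeds on the Euclidean side remain linearly independent after the correction step $D_B^{-1}$, and that this correction is compatible with the tensor-product decomposition giving parallel spinors on $SM$.
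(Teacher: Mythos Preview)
Your reduction to the main comparison theorem is essentially right, though you gloss over a technical point: the extended map $F$ sends the singular stratum of $M$ to smooth points of $\closure{\Omega}$, so it is \emph{not} a $\cat{C}_m$-map in the sense of Definition \ref{def:Cm map}, and Theorem \ref{thm:scalar-mean curvature comparison - manifolds with iacs} does not literally apply. The paper handles this by choosing $F$ to be constant near the singular stratum, so that $F^*T\closure{\Omega}$ is trivial there and the Fredholm machinery of Section \ref{subsec:fredholmness for iacs manifolds} still goes through.

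The real gap is in your upgrade from Ricci-flat to flat. Your plan to run the seed-and-solve construction with each parallel section $\tilde{s}_\alpha$ of $F^*S\closure{\Omega}$ as a boundary seed does not work as stated: the seeds in the proof of the main theorem are sections of $S(T\dom{M}\oplus F^*T\closure{\Omega})|_{\partial M}$ lying in $\ker D^{\partial,\partial}$ and satisfying the relative condition $B^\perp$, not bare sections of $F^*S\closure{\Omega}$; and, more seriously, the seed-and-solve step is only available when $D_B$ is \emph{invertible}. In the non-invertible branch of the dichotomy you are handed a single parallel section from $\ker D_B$ and have no control over which one. So you cannot force $N$ independent parallel sections by varying the seed, and the obstacle you flag at the end is not a residual technicality but the heart of the problem. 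The paper takes a different route: it extracts \emph{one} nonzero parallel section $\sigma$ (from either branch of the dichotomy), then generates $2^{n-1}$ parallel sections by acting with Clifford elements $c(\lambda)$, $\lambda\in\Cl(\R^n)$, coming from the flat Euclidean factor. Orthogonality of the family $\{c(\lambda)\sigma\}$ is proved pointwise at a single $x\in\partial M$ where $\Sigma$ is strictly convex at $F_\partial(x)$: the equality case of the mean-curvature estimate in Proposition \ref{prop:scalar-mean curvature estimate} forces $\dom{c}_\partial(w)c_\partial(dF_\partial(w))\sigma(x)=C\sigma(x)$ for all tangent $w$, and this eigenvector relation, together with anti-commutation of $c(\lambda)$ with suitable $\dom{c}_\partial(w)c_\partial(dF_\partial(w))$, separates the $c(\lambda)\sigma$. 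This gives a full parallel frame of an appropriate half-spinor bundle (the $+1$-eigenspace of $\gamma_1$ when $n$ is even, of $\gamma_2$ when $n$ is odd), hence flatness. The key idea you are missing is to manufacture the extra parallel sections algebraically from one, rather than analytically from many seeds.
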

\begin{proof}
    Let $\Omega$ be the Euclidean domain bounded by $\Sigma$.
    By definition, the singularities of \(M\) lie entirely within the interior.
    Therefore, the map \(F_\partial\) extends to a smooth map \(F\colon M\to\Omega\) with the same nonzero degree, mapping an open neighborhood of the singularities of \(M\) to some point in the interior of \(\Omega\).
    Since \(F\) is constant in a neighborhood of the singular stratum, the pullback \(F^*T\Omega\) must also be constant near the singular stratum.
    The proofs of Section \ref{sec:manifolds with iacs} go through to show that Fredholmness of the twisted Dirac operator and Theorem \ref{thm:scalar-mean curvature comparison - manifolds with iacs} still hold, even if the map \(F\) does not satisfy Definition \ref{def:Cm map}.
    Thus, $g$ is scalar-flat and \(H_g = \onorm{dF_\partial}H\)

    Recall that there are two cases in the proof of Theorem \ref{thm:scalar-mean curvature comparison - manifolds with iacs}. In Case (1), there exists a nonzero section \(\sigma\) in \(H^1(M,S(TM\oplus F^*T\Omega))\) such that \(\sigma\) is parallel and satisfies the absolute boundary condition \(B\). 
    In Case (2), we also obtain a nonzero parallel section $\sigma$ in \(H^1(M,S(TM\oplus F^*T\Omega))\), though $\sigma$ may not satisfy either absolute or relative boundary condition.
    Nevertheless, in either case, we obtain a parallel section \(\sigma\) which not only satisfies Inequality \eqref{eqn:scalar-mean curvature estimate with boundary condition}, but also turns it into an equality.
    
    For local orthonormal bases $\{\hat e_1,\ldots,\hat e_n\}$ of $TM$ and $\{e_1,\ldots,e_n\}$ of $T\Omega=\R^n$, let us denote
    $$\gamma_1=(\sqrt{-1})^{n(n+1)/2} \hat c(\hat e_1)\hat c(\hat e_2)\cdots\hat c(\hat e_n)\textup{ and }\gamma_2=(\sqrt{-1})^{n(n+1)/2} c(e_1)c(e_2)\cdots c(e_n).$$
    Then $\gamma_1$ and $\gamma_2$ are both self-adjoint involutions. 
    Let us also fix a point $x\in \partial M$ such that $\Sigma$ is strictly convex at $F_\partial(x)$. Suppose that $\{e_1,e_2,\ldots,e_{n-1}\}$ is an orthonormal basis of $T_{F_\partial(x)}\Sigma$. The proof of Proposition \ref{prop:scalar-mean curvature estimate} combined with the positivity of the second fundamental form of $\Sigma$ at $F_\partial(x)$ shows that $F_\partial$ is a homothety at \(x\) and that the section $\sigma$ satisfies
    $$\hat c_\partial(w) c_\partial(dF_\partial(w))\sigma(x) =C\sigma(x)$$
    for any vector $w$ that is tangent to $T_x\partial M$. 
    Here, \(C > 0\) is some constant and
    \begin{equation*}
        \dom{c}_\partial(w) \coloneqq \dom{c}(\dom{\nu}) \dom{c}(w) \text{ and } c_\partial(dF_\partial(w)) \coloneqq c(\nu) c(dF_\partial(w))
    \end{equation*}
    where \(\dom{\nu}\) and \(\nu\) are the unit inner normal vectors of $\partial M$ at $x$ and $\Sigma$ at $F_\partial(x)$, respectively.
    Indeed, using \(H_g = \onorm{dF_\partial} H\) and the inequalities established in Proposition \ref{prop:scalar-mean curvature estimate}, we deduce
    \begin{equation*}
        \sum_i \Big(\onorm{dF_\partial} - \frac{\mu_i^2}{\onorm{dF_\partial}}\Big) \inner{\mathcal{A}e_i, e_i} = 0.
    \end{equation*}
    Strict convexity at \(x\) means \(\inner{\mathcal{A}e_i, e_i} > 0\) for each \(i\), which implies \(\mu_i = \onorm{dF_\partial}\).
    Since \(F\) is a homothety, we may perform a change of bases to get \(\{\dom{e}_i\}\) and \(\{e_i\}\) local orthonormal bases of \(\bnd{M}\) and \(\Sigma\) such that \(dF_\partial (\dom{e}_i) = \onorm{dF_\partial} e_i\) and \(\{e_i\}\) diagonalizes \(\mathcal{A}\).
    Denote \(\beta_i = \inner{\mathcal{A} e_i, e_i}\).
    The proof of Proposition \ref{prop:scalar-mean curvature estimate} combined with our geometric assumptions shows
    \begin{equation*}
        \int_{\bnd{M}} \inner{\big(\frac{1}{2} H_g + \frac{1}{2} \sum_{i,j} \inner{\conn[]_{F_*\dom{e}_i} \nu, e_j} \dom{c}_\partial(\dom{e}_i) c_\partial(e_j)\big) \sigma, \sigma} = 0
    \end{equation*}
    which translates to
    \begin{equation*}
        \int_{\bnd{M}} \sum_i \frac{1}{2} \inner{\big(\onorm{dF_\partial} \beta_i - \onorm{dF_\partial} \beta_i \dom{c}_\partial(\dom{e}_i) c_\partial(e_i)\big) \sigma, \sigma} = 0.
    \end{equation*}
    Clearly,
    \begin{equation*}
        \onorm{dF_\partial} \beta_i - \onorm{dF_\partial} \beta_i \dom{c}_\partial(\dom{e}_i) c_\partial(e_i) \geq 0
    \end{equation*}
    pointwise for each \(i\).
    Hence,
    \begin{equation*}
        \dom{c}_\partial(\dom{e}_i) c_\partial(dF_\partial(e_i)) \sigma(x) = \onorm{dF_\partial}\sigma(x).
    \end{equation*}
     
    We shall prove that $g$ is flat by constructing sufficiently many parallel sections of $S(TM\oplus F^*T\Omega)$, following the same idea as the proof in \cite{WangXieflat}.
    Note first that the self-adjoint operator $\hat c_\partial(w) c_\partial(dF_\partial(w))$ commutes with both $\gamma_1$ and $\gamma_2$.

    \textbf{Case (I).} $n$ is even.
    
    In this case, by projecting \(\sigma\) onto the \(\pm 1\)-eigenspaces of \(\gamma_1\), we may assume, without loss of generality, that the nonzero parallel section $\sigma$ lies in $S(TM\oplus F^*T\Omega)^{+\bullet}$, where $S(TM\oplus F^*T\Omega)^{+\bullet}$ is the subbundle of $S(TM\oplus F^*T\Omega)$ given by the $+1$-eigenspace of $\gamma_1$. 
    
    Let us define
   \begin{equation*}
        \mathscr V=\{\lambda=e_{i_1}e_{i_2}\cdots e_{i_k}:~0\leq k\leq n-1,~i_1<i_2<\cdots<i_k\},
    \end{equation*}
    considered as an orthonormal subset of $\Cl(\R^n)$, and denote
    $$c(\lambda)\coloneqq c(e_{i_1})c(e_{i_2})\cdots c(e_{i_k}).$$
    Note that for any $\lambda\in\mathscr V$, $c(\lambda)$ commutes with $\gamma_1$. Thus, $c(\lambda)\sigma$ is still a parallel section in $S(TM\oplus F^*T\Omega)^{+\bullet}$.
    
    We claim that $\{c(\lambda)\sigma\}_{\lambda\in\mathscr V}$ is an orthonormal parallel basis of $S(TM\oplus F^*T\Omega)^{+\bullet}$.
    Indeed, for any nontrivial $\lambda=e_{i_1}e_{i_2}\cdots e_{i_k}\in\mathscr V$, let us pick $w\in T_x\partial M$ so that $dF_\partial(w)=e_{i_1}$.
    Direct computation shows that
    $$\hat c_\partial(w) c_\partial(dF_\partial (w))\textup{ anti-commutes with } c(\lambda).$$
    This implies that \(c(\lambda) \sigma\) and \(\sigma\) are orthogonal at \(x\). As both \(c(\lambda)\sigma\) and \(\sigma\) are parallel, they must actually be orthogonal on all of \(M\). 
    Using this, we conclude that $\langle c(\lambda_1)\sigma,c(\lambda_2)\sigma\rangle=0$ for any $\lambda_1\ne\lambda_2\in\mathscr V$.
    Therefore, the bundle \(S(TM\oplus F^*T\Omega)^{+\bullet}\) over \(M\) admits \(2^{n-1}\) nonzero parallel sections, which is equal to its rank.
    This means that the curvature form of this bundle must vanish, implying that \(g\) is flat. 

    \textbf{Case (II).} $n$ is odd.

    In this case, let us consider the subbundle $S(TM\oplus F^*T\Omega)^{\bullet+}$ given by the $+1$-eigenspace of $\gamma_2$. Once again, we may assume, without loss of generality, that $\sigma$ is a section of $S(TM\oplus F^*T\Omega)^{\bullet+}$. Let $\mathscr V$ be as above.
    Note that for any $\lambda\in\mathscr V$, $c(\lambda)$ commutes with $\gamma_2$. Thus, $c(\lambda)\sigma$ is still a parallel section in $S(TM\oplus F^*T\Omega)^{\bullet+}$.
    
    We claim that $\{c(\lambda)\sigma\}_{\lambda\in\mathscr V}$ forms an orthonormal parallel basis of $S(TM\oplus F^*T\Omega)^{\bullet+}$.
    Indeed, for any nontrivial $\lambda=e_{i_1}e_{i_2}\cdots e_{i_k}\in\mathscr V$, let us pick $w\in T_x\partial M$ so that $dF_\partial(w)=e_{i_1}$. Just as in the previous case, a direct computation shows
    $$\hat c_\partial(w) c_\partial(dF_\partial (w))\textup{ anti-commutes with } c(\lambda).$$
    This implies that \(c(\lambda) \sigma\) and \(\sigma\) are orthogonal at \(x\), and hence are orthogonal on all of \(M\).
    Using this, we conclude that $\langle c(\lambda_1)\sigma,c(\lambda_2)\sigma\rangle=0$ for any $\lambda_1\ne\lambda_2\in\mathscr V$.    
    Therefore, the bundle \(S(TM\oplus F^*T\Omega)^{\bullet+}\) over \(M\) admits \(2^{n-1}\) nonzero parallel sections, which is equal to its rank. Hence, \(g\) is flat. 
\end{proof}

\subsection{Positive mass theorem}
We start by establishing some definitions and notation.

\begin{definition}
    Consider the Euclidean flat space \(\R^n\) with a smooth positive function \(\rho\) that is equal to the distance function from the origin outside a compact set.
    For any \(\delta > 0\) and any measurable function \(f\), define
    \begin{equation*}
        \norm{f}_{L_{\delta}^2}^2 \coloneqq \int_{\R^n} \abs{f}^2 \rho^{-2 \delta - n}.
    \end{equation*}
    The \(\delta\)-weighted \(L^2\)-space is defined to be
    \begin{equation*}
        L_{\delta}^2(\R^n) = \{f \colon \R^n \to \C : \norm{f}_{L_{\delta}^2} < \infty\}.
    \end{equation*}
    endowed with the \(\norm{\cdot}_{L_\delta^2}\) norm.

    The \(\delta\)-weighted \(H^k\)-Sobolev space is defined to be
    \begin{equation*}
        H_{\delta}^k(\R^\tdim) = \{f \colon \R^n \to \C : \partial^{\alpha} f \in L_{\delta - \abs{\alpha}}^2(\R^n) \;\forall \text{ multi-indices \(\alpha\) with \(\abs{\alpha} \leq k\)}\}
    \end{equation*}
    where differentiation is in the weak sense.
    This space is endowed with the norm
    \begin{equation*}
        \norm{f}_{H^k_\delta}^2 = \sum_{\abs{\alpha} \leq k} \norm{\partial^\alpha f}_{L_{\delta - \abs{\alpha}}^2}^2.
    \end{equation*}
\end{definition}
It is not hard to see that the vector spaces \(L_{\delta}^2(\R^\tdim)\) and \(H_\delta^k(\R^\tdim)\), as well as the topologies induced by the norms, do not depend on the choice of \(\rho\).

\begin{definition}
    A \(\cat{C}_k\)-manifold \((M,g)\) is said to be asymptotically flat if the following conditions hold:
    \begin{enumerate}[label=(\arabic*)]
        \item There is a compact set \(K\) such that \((K,g)\) is a \(\cat{C}_k\)-manifold with smooth boundary.
        \item There exists a diffeomorphism \(\Phi \colon M \setminus K \to \R^n \setminus K'\), where \(K'\) is a compact subset of \(\R^n\), satisfying the following properties:
        \begin{enumerate}
            \item \(\Phi_* g - g_{\R^n} \in H_\tau^2(\R^n \setminus K')\) where \(\tau = \frac{n-2}{2}\)
            \item \(\lambda^{-1} g_{\R^n} \leq \Phi_*g \leq \lambda g_{\R^n}\) for some \(\lambda > 0\)
        \end{enumerate}

        \item \(\Sc_g \in L^1(M \setminus K)\)
    \end{enumerate}
\end{definition}

For an asymptotically flat \(\cat{C}_k\)-manifold \((M,g)\) the ADM mass is defined in the usual manner, namely,
\begin{equation*}
    m(g) = \frac{1}{16\pi}\lim_{R \to \infty} \int_{S(R)} \big( (\Phi_*g)_{ij,j} - (\Phi_*g)_{jj,i} \big) dS^i
\end{equation*}
where \((\Phi_*g)_{ij,k}\) denotes the \(k\)-th partial derivative, \(S(R)\) is the standard round sphere of radius \(R\), and \(dS^i\) is the normal surface element in the direction of \(i\).

We will prove the positive mass theorem for IACS manifolds:
\begin{theorem}\label{thm:positive mass theorem for iterated asymptotically conical spaces}
    Let \((M^\tdim,g)\) be an \(\tdim\)-dimensional asymptotically flat spin \(\cat{C}_k\)-manifold.
    Suppose that the links of \(M\) have dimension at least two.
    If \(\Sc_g \geq 0\), then \(m(g) \geq 0\).
    Moreover, if \(m(g) = 0\), then the metric \(g\) is flat.
\end{theorem}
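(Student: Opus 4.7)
The plan is to adapt Witten's Dirac-operator proof of the spin positive mass theorem to the iterated asymptotically conical setting, combining the Fredholmness and self-adjointness framework developed in Sections \ref{sec:fredholmness} and \ref{subsec:fredholmness for iacs manifolds} with a dichotomy argument in the spirit of the proof of Theorem \ref{thm:scalar-mean curvature comparison - manifolds with iacs}. On the asymptotically flat spin IACS manifold $(M,g)$, I work with the untwisted spin Dirac operator $D$ acting on the spinor bundle $SM$; no bundle twisting is needed because the target geometry is flat Euclidean space. Via the asymptotic chart $\Phi$, pick a nonzero parallel spinor $\sigma_0$ on the Euclidean end and extend it to a smooth spinor $\tilde\sigma$ on $M$ that equals $\sigma_0$ outside a compact set. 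The asymptotic flatness condition $\Phi_*g-g_{\R^n}\in H^2_{(n-2)/2}$ ensures $D\tilde\sigma\in L^2(M,SM)$.

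The next step is to solve $D\psi=-D\tilde\sigma$ for $\psi$ in a weighted $H^1$-space decaying at infinity. Near the singular stratum, $D$ is self-adjoint by the arguments of Section \ref{subsec:fredholmness for iacs manifolds} using Propositions \ref{prop:multiplication by 1/r is bounded - iacs manifolds} and \ref{prop:compactness theorem for iacs manifolds}, noting that the hypothesis $\Sc_g\geq 0$ is sufficient in place of the sharp scalar curvature comparison required earlier (cf.\ Remark \ref{rem:self-adjointness can be proven with weaker lower bound on scalar curvature}). At the Euclidean end, standard Lockhart--McOwen weighted Sobolev theory makes $D$ Fredholm. The dichotomy now enters: either $-D\tilde\sigma$ lies in the range of $D$ so the equation is solvable, or the $L^2$-cokernel is nontrivial. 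In the latter case, self-adjointness puts any cokernel element $\eta$ into $\ker D$; the Lichnerowicz identity $D^2=\nabla^*\nabla+\Sc_g/4$ together with $\Sc_g\geq 0$ then forces $\eta$ to be parallel, but $\eta$ decays at infinity, so $\eta\equiv 0$, contradicting nontriviality. Thus the equation is solvable and yields $\sigma=\tilde\sigma+\psi$ with $D\sigma=0$ and $\sigma-\sigma_0$ decaying at infinity.

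Now apply Witten's integration by parts on the compact region $M_{R,\varepsilon}$ obtained by intersecting the coordinate ball of radius $R$ with the complement of an $\varepsilon$-tubular neighborhood of the singular stratum. The boundary version of the Lichnerowicz formula produces two boundary contributions. The outer boundary $\{|x|=R\}$ contributes, by Witten's classical asymptotic calculation, $c_n\,m(g)\,|\sigma_0|^2+o(1)$ as $R\to\infty$ for a positive universal constant $c_n$. The inner boundary contribution is controlled via Cauchy--Schwarz by $\|\sigma\|_{L^2}$ and $\|\nabla\sigma\|_{L^2}$ over the tubular surface, and vanishes as $\varepsilon\to 0$ by Proposition \ref{prop:multiplication by 1/r is bounded - iacs manifolds} and the codimension-at-least-three hypothesis, mirroring the estimate \eqref{eqn:L^1 compactness estimate} from the proof of Proposition \ref{prop:compactness theorem for facs manifolds}. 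Combining these limits yields
\begin{equation*}
    c_n\,m(g)\,|\sigma_0|^2=\int_M|\nabla\sigma|^2+\int_M\frac{\Sc_g}{4}|\sigma|^2\geq 0,
\end{equation*}
proving $m(g)\geq 0$.

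For the rigidity statement, $m(g)=0$ forces $\nabla\sigma\equiv 0$ and $\Sc_g|\sigma|^2\equiv 0$. Since $\sigma$ is parallel with nonzero limit $\sigma_0$ at infinity, it is nowhere vanishing, so $\Sc_g\equiv 0$. Running the construction for a basis of parallel spinors at the Euclidean end produces a global parallel frame of $SM$, which trivializes the spin connection and shows $g$ is flat on the smooth stratum. Since the links have dimension at least two, the singular stratum has codimension at least three, and flatness extends across it in the IACS sense. The main anticipated obstacle is the simultaneous control of the inner and outer boundary terms under the joint limit $\varepsilon\to 0$, $R\to\infty$: the estimates near singularities furnished by Proposition \ref{prop:multiplication by 1/r is bounded - iacs manifolds} are precisely what closes this up, while the dichotomy bypasses the need for a direct isomorphism theorem for the Dirac operator on weighted Sobolev spaces over singular geometries.
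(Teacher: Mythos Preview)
Your proposal is correct and follows essentially the same Witten-type argument as the paper: solve for a harmonic spinor asymptotic to a constant one (the paper packages solvability as an invertibility lemma on weighted spaces rather than a dichotomy, but the content is identical), integrate the Lichnerowicz formula, and identify the boundary term at infinity with the mass. The one place the paper is more careful is the vanishing of the inner boundary contribution at the singular stratum, which it obtains indirectly by first extending the Lichnerowicz identity to $H^1_{-\tau}$ via density of $C_c^\infty$ (sections vanishing near $\cat{C}$) and then comparing with the direct Stokes computation; your Cauchy--Schwarz sketch is morally right but would need a coarea/good-level-set step, since Proposition~\ref{prop:multiplication by 1/r is bounded - iacs manifolds} controls volume rather than surface integrals.
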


Before getting into the proof, we first extend the notion of a \(\delta\)-weighted space to include Hermitian bundles with Hermitian connection on asymptotically flat IACS manifolds.
\begin{definition}
    Let \(E\) be a Hermitian bundle with Hermitian connection on an asymptotically flat IACS manifold \(M\).

    The \(\delta\)-weighted \(H^k\)-Sobolev space \(H_\delta^k(M,E)\) is defined to be the completion of the smooth compactly supported sections of \(E\) with respect to the norm
    \begin{equation*}
        \norm{\sigma}_{H_\delta^k}^2 \coloneqq \sum_{\alpha \leq k} \norm{\conn[\alpha] \sigma}_{L_{\delta-\alpha}^2}^2
    \end{equation*}
    where
    \begin{equation*}
        \norm{\conn[\alpha] \sigma}^2 \coloneqq \int_M \pnorm{\conn[\alpha] \sigma}^2 \rho^{-2\delta-n}.
    \end{equation*}
    Here, \(\rho\) is any positive function on \(M\) that is equal to the Euclidean distance from the origin outside some compact set \(K\) for which \(M \setminus K \cong \R^\tdim \setminus K'\).
\end{definition}

The proof of Theorem \ref{thm:positive mass theorem for iterated asymptotically conical spaces} utilizes the following crucial lemma.
\begin{lemma}\label{lem:invertibility of dirac operator}
    If \(\Sc_g \geq 0\), then the Dirac operator
    \begin{equation*}
        D \colon H_{-\tau}^1(M, SM) \to L_{-\tau-1}^2(M, SM)
    \end{equation*}
    is invertible when \(\tau = \frac{n-2}{2}\).
\end{lemma}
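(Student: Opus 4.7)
The plan is to prove invertibility through three stages: Fredholmness of $D$ between the weighted Sobolev spaces, injectivity via the Lichnerowicz formula, and surjectivity via self-adjointness. The self-dual nature of the spaces involved is crucial: when $\tau = (n-2)/2$, the target space $L^2_{-\tau-1}$ has weight $-n/2$, which is precisely the self-dual weight under the standard $L^2$-pairing.

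First, I would establish that $D$ is a bounded Fredholm operator $H^1_{-\tau}(M, SM) \to L^2_{-\tau-1}(M, SM)$ by patching two analyses. On the asymptotically flat end, one uses the classical theory of the Dirac operator on $\R^n$ in weighted Sobolev spaces: the asymptotic model operator is Fredholm at the critical weight $\tau = (n-2)/2$ because this weight avoids the indicial roots. Near the singular stratum, one invokes the IACS machinery developed in Section~\ref{sec:fredholmness} and Section~\ref{subsec:fredholmness for iacs manifolds}, particularly the Hardy-type estimate in Proposition~\ref{prop:multiplication by 1/r is bounded - iacs manifolds} and the Rellich--Kondrachov compactness in Proposition~\ref{prop:compactness theorem for iacs manifolds}, to get compactness near the singularities and essential self-adjointness of the local model operators. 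A standard parametrix patching glues these to give global Fredholmness.

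Second, for injectivity, suppose $D\sigma = 0$ with $\sigma \in H^1_{-\tau}$. Applying the Lichnerowicz formula $D^2 = \nabla^*\nabla + \Sc_g/4$ and integrating by parts yields
\begin{equation*}
0 \;=\; \int_M |D\sigma|^2 \;=\; \int_M |\nabla\sigma|^2 + \int_M \tfrac{\Sc_g}{4}|\sigma|^2 + (\text{boundary terms at infinity and near }\cat{C}).
\end{equation*}
The boundary integral at radius $R$ scales like $R^{n-1}\cdot R^{-(n-2)/2}\cdot R^{-n/2} = R^{-1/2} \to 0$ thanks to the weight $-\tau$. For the boundary contribution near $\cat{C}$, one approximates $\sigma$ by sections vanishing in a neighborhood of the singular stratum, with the Hardy estimate from Proposition~\ref{prop:multiplication by 1/r is bounded - iacs manifolds} ensuring the cut-off error is controlled. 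Since $\Sc_g\geq 0$, we conclude $\nabla\sigma = 0$ and $\Sc_g|\sigma|^2 = 0$. A parallel spinor has constant pointwise norm, but the decay condition $\sigma \in L^2_{-\tau}$ forces this constant to vanish on the AF end, and unique continuation of parallel sections propagates this to all of $M$, giving $\sigma \equiv 0$.

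Third, for surjectivity, Fredholmness plus injectivity reduces the problem to showing the cokernel vanishes. Any $\eta \in L^2_{-\tau-1}$ annihilating $D(H^1_{-\tau})$ satisfies $D\eta = 0$ weakly. Using the self-duality of $L^2_{-n/2}$ and elliptic regularity (standard at infinity, and from the IACS Fredholmness near the singular stratum) yields $\eta \in H^1_{-\tau}$, so $\eta = 0$ by the injectivity just proven. The main obstacle I anticipate is making the Fredholmness and the integration by parts rigorous at the singular stratum: the Lichnerowicz potential $\Sc_g/4$ can behave like $1/r^2$ near a conical singularity, and showing that this term together with the $\nabla^*\nabla$ term still yields a closed-range operator requires careful iteration of the Hardy estimate through the successive $\cat{C}_m$-strata, paralleling the inductive self-adjointness argument of Section~\ref{subsec:fredholmness for iacs manifolds}.
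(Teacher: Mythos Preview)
Your proposal is correct and follows essentially the same route as the paper: Fredholmness via parametrix patching (Bartnik at infinity, the IACS machinery near the singular stratum), then injectivity and surjectivity via the Lichnerowicz identity and the observation that a nonzero parallel spinor cannot lie in the relevant weighted space. The only minor differences are that the paper extends the Lichnerowicz identity to all of $H^1_{-\tau}$ by density (showing multiplication by $\sqrt{\Sc_g}/2$ is bounded $H^1_{-\tau}\to L^2$) rather than estimating boundary integrals pointwise at radius $R$, and for surjectivity it runs the parallel-spinor contradiction directly at the dual weight $H^1_{-n/2}$ (where a nonzero constant $|\sigma^*|$ violates $\int_M|\sigma^*|^2<\infty$) instead of bootstrapping the cokernel element back into $H^1_{-\tau}$.
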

\begin{proof}
    The proofs of Section \ref{subsec:fredholmness for iacs manifolds} carry over to show that self-adjointness holds near the singular stratum.
    Combining this with results of \cite{Bartnik:1986} we get \(D^* = D\), where on the left-hand side \(D^*\) is interpreted as the functional-analytic adjoint of
    \begin{equation*}
        D \colon H_{-\tau}^1(M, SM) \to L_{-\tau-1}^2(M, SM)
    \end{equation*}
    and on the right-hand side \(D\) is interpreted as the map
        \begin{equation*}
        D \colon H_{\tau-n+1}^1(M, SM) \to L_{\tau-n}^2(M, SM).
    \end{equation*}

    The proof of Fredholmness also carries over from Section \ref{subsec:fredholmness for iacs manifolds}, allowing us to construct a parametrix for \(D\) on some compact set \(K\), with smooth boundary, containing the singular stratum.
    The results of \cite{Bartnik:1986} allow us to construct a parametrix for \(D\) on the infinite end \(M \setminus K\).
    By gluing these parametrices together with a partition of unity, a parametrix for \(D\) on the entire \(\cat{C}_k\)-manifold \(M\) can be constructed.
    Hence, \(D\) is Fredholm, and in particular has closed image.

    To show that the operator is invertible, we start by showing that the kernel is trivial.
    By definition, if \(\sigma \in H_{-\tau}^1\), then
    \begin{equation*}
        \int_M \pnorm{\conn[] \sigma}^2 < \infty \text{ and } \int_M \pnorm{\sigma}^2 \rho^{-2} < \infty.
    \end{equation*}
    If \(\sigma\) were smooth and compactly supported, then, by the Lichnerowicz formula,
    \begin{equation*}
        \int_M \frac{\Sc_g}{4} \pnorm{\sigma}^2 = \int_M \pnorm{D\sigma}^2 - \int_M \pnorm{\conn[] \sigma}^2
    \end{equation*}
    where the right-hand side is bounded above by a constant multiple of \(\norm{\conn[] \sigma}_{L^2}^2\)
    Since the smooth compactly supported functions are dense in \(H_{-\tau}^1(M, SM)\), multiplication by \(\frac{\sqrt{\Sc_g}}{2}\) must a bounded linear map from \(H_{-\tau}^1(M, SM)\) to \(L^2(M, SM)\).
    Thus, the Lichnerowicz formula holds for general \(\sigma \in H_{-\tau}^1(M, SM)\).

    Suppose \(\sigma \in H_{-\tau}^1(M, SM)\) were a nonzero element in the kernel of \(D\).
    By ellipticity, \(\sigma\) must be a smooth section of \(SM\) away from the singular stratum.
    Combining the nonnegativity of the scalar curvature with the Lichnerowicz formula, we see that \(\sigma\) is parallel.
    In particular, \(\pnorm{\sigma}\) must be a nonzero constant.
    This implies that for some large \(R > 0\),
    \begin{equation*}
        \int_M \pnorm{\sigma}^2 \rho^{-2} \geq \int_{R}^{\infty} \pnorm{\sigma}^2 \rho^{n-3} d\rho = \infty
    \end{equation*}
    a contradiction.
    Hence, the kernel of \(D\) must be trivial.

    To establish surjectivity, we investigate the kernel of
    \begin{equation*}
        D \colon H_{\tau - n + 1}^1(M, SM) \to L_{\tau - n}^2(M, SM).
    \end{equation*}
    By definition, an element \(\sigma^* \in H_{\tau + 1 - n}^1(M, SM) = H_{-n/2}^1(M, SM)\) satisfies
    \begin{equation*}
        \int_M \pnorm{\conn[] \sigma^*}^2 \rho^2 < \infty \text{ and } \int_M \pnorm{\sigma^*}^2 < \infty.
    \end{equation*}
    In particular, \(\conn[] \sigma^*\) is in \(L^2(M, SM)\).
    The same argument as before implies that multiplication by \(\frac{\sqrt{\Sc_g}}{2}\) is a bounded linear map from \(H_{\tau + 1 - n}^1(M, SM)\) to \(L^2(M, SM)\).
    Thus, the Lichnerowicz formula holds for \(\sigma \in H_{\tau + 1 - n}^1(M, SM)\).

    Now, suppose \(\sigma^* \in H_{\tau+1-n}^1(M, SM)\) were a nonzero element in the kernel of \(D^*\).
    By ellipticity and the Lichnerowicz formula, \(\sigma^*\) is smooth away from the singular stratum and also parallel.
    Hence, \(\pnorm{\sigma^*}\) must be a nonzero constant.
    This implies that for sufficiently large \(R > 0\),
    \begin{equation*}
        \int_M \pnorm{\sigma^*}^2 \geq \int_R^\infty \pnorm{\sigma^*}^2 \rho^{n-1} d\rho = \infty
    \end{equation*}
    a contradiction.
\end{proof}
\begin{proof}[Proof of Theorem \ref{thm:positive mass theorem for iterated asymptotically conical spaces}]
    Let \(E = M \setminus K\).
    Let \(\sigma_0\) denote the pullback of a constant spinor on \(E\).
    Smoothly extend \(\sigma_0\) to a section of \(M\) vanishing near the singular stratum.
    According to \cite{Bartnik:1986}, \(\conn[] \sigma_0 \in L_{-\tau-1}^2(M, SM)\).
    By Lemma \ref{lem:invertibility of dirac operator}, there exists a unique \(\sigma_1 \in H_{-\tau}^1(M, SM)\) for which \(D\sigma_1 = -D\sigma_0\).
    Let \(\sigma = \sigma_1 + \sigma_0\).
    Then, \(D\sigma = 0\) and \(\sigma = \sigma_0\) modulo \(H_{-\tau}^1(M, SM)\).
    In particular, \(\conn[] \sigma \in L_{-\tau-1}^2(M, SM)\).

    By the ellipticity of \(D\), \(\sigma_1\) is smooth except at the singular stratum.
    The proof of Lemma \ref{lem:invertibility of dirac operator} showed that the Lichnerowicz formula holds for \(H_{-\tau}^1(M, SM)\), namely,
    \begin{equation}\label{eqn:lichnerowicz formula for H1_tau section}
        \int_M \pnorm{D\sigma_1}^2 = \int_M \pnorm{\nabla \sigma_1}^2 + \int_M \frac{\Sc_g}{4}\pnorm{\sigma_1}^2.
    \end{equation}
    Take a compactly supported cutoff function \(\psi\) which is equal to one near the singular stratum.
    Since \(\psi \sigma_1 \in H_{-\tau}^1(M, SM)\), again we must have
    \begin{equation*}
        \int_M \pnorm{D (\psi \sigma_1)}^2 = \int_M \pnorm{\nabla (\psi \sigma_1)}^2 + \int_M \frac{\Sc_g}{4}\pnorm{\psi \sigma_1}^2.
    \end{equation*}
    On the other hand, a direct application of the Lichnerowicz formula and Stokes's theorem gives
    \begin{equation*}
        \int_M \pnorm{D (\psi \sigma_1)}^2 = \int_M \pnorm{\nabla (\psi \sigma_1)}^2 + \int_M \frac{\Sc_g}{4}\pnorm{\psi \sigma_1}^2 + \lim_{R \to 0} \int_{d_\cat{C}=R} \big< L^\partial \sigma_1, \sigma_1 \big>.
    \end{equation*}
    Here, \(d_\cat{C}(p) \coloneqq \dist(p, \cat{C})\) is the distance to the singular stratum and \(L^\partial\) is the differential operator on the boundary defined by
    \begin{equation*}
        L^\partial \sigma = \sum_{i=1}^{n-1} c(\nu)c(e_i)\conn[]_{e_i} \sigma
    \end{equation*}
    where \(\nu\) is the inner unit normal vector and \(\{\nu, \{e_i\}\}\) is an orthonormal frame.
    Thus,
    \begin{equation}\label{eqn:contribution from singularity vanishes}
        \lim_{R \to 0} \int_{d_\cat{C}=R} \big< L^\partial \sigma_1, \sigma_1 \big> = 0.
    \end{equation}
    A direct application of the Lichnerowicz formula, Stokes's theorem, and \eqref{eqn:contribution from singularity vanishes} gives
    \begin{equation}
        \label{eqn:direct lichnerowicz formula}\int_{D(R)} \pnorm{D\sigma_1}^2 = \int_{D(R)} \pnorm{\conn[]\sigma_1}^2 + \int_{D(R)} \frac{\Sc_g}{4} \pnorm{\sigma_1}^2 + \int_{S(R)} \big< L^\partial \sigma_1, \sigma_1 \big>
    \end{equation}
    for large balls \(D(R)\).
    Taking the limit as \(R \to \infty\), since \eqref{eqn:lichnerowicz formula for H1_tau section} must hold, we conclude
    \begin{equation*}
        \lim_{R \to \infty} \int_{S(R)} \big< L^\partial \sigma_1, \sigma_1 \big> = 0.
    \end{equation*}

    Observe that if \(\psi\) is a smooth compactly supported section, then
    \begin{align*}
        \abs{\int_M \frac{\Sc_g}{4} \inner{\psi, \sigma_0}} \leq \int_M \abs{\inner{\frac{\sqrt{\Sc_g}}{2} \psi, \frac{\sqrt{\Sc_g}}{2}\sigma_0}} &\leq \int_M \frac{\Sc_g}{4}\pnorm{\psi}^2 \int_M \frac{\Sc_g}{4} \pnorm{\sigma_0}^2.
    \end{align*}
    The integral
    \begin{equation*}
        \int_M \frac{\Sc_g}{4} \pnorm{\sigma_0}^2
    \end{equation*}
    is finite since \(\Sc_g \in L^1\) and because \(\pnorm{{\sigma_0}}^2 \leq C\) uniformly, for some constant \(C\).
    In Lemma \ref{lem:invertibility of dirac operator} we showed
    \begin{equation*}
        \int_M \frac{\Sc_g}{4}\pnorm{\psi}^2 \leq C\norm{\psi}_{H_{-\tau}^1}^2
    \end{equation*}
    for some \(C\) independent of \(\psi\).
    This shows
    \begin{equation*}
        \int_M \frac{\Sc_g}{4} \inner{\;\cdot\;, \sigma_0}
    \end{equation*}
    is a continuous functional on \(H_{-\tau}^1(M, SM)\).
    By checking on smooth compactly supported sections, it is not hard to see that
    \begin{equation*}
        \int_M \inner{D\psi, D\sigma_0} = \int_M \inner{\conn[] \psi, \conn[] \sigma_0} + \int_M \frac{\Sc_g}{4} \inner{\psi, \sigma_0}
    \end{equation*}
    for all \(\psi \in H_{-\tau}^1(M, SM)\).
    On the other hand, by directly applying the Lichnerowicz formula and Stokes's theorem,
    \begin{equation*}
        \int_{D(R)} \big< D\sigma_1, D\sigma_0 \big> = \int_{D(R)} \big< \conn[]\sigma_1, \conn[]\sigma_0 \big> + \int_{D(R)} \frac{\Sc_g}{4} \big< \sigma_1, \sigma_0 \big> + \int_{S(R)} \big< L^\partial \sigma_1, \sigma_0 \big>.
    \end{equation*}
    Thus,
    \begin{equation*}
        \lim_{R \to \infty} \int_{S(R)} \big< L^\partial \sigma_1, \sigma_0 \big> = 0.
    \end{equation*}

    Combining all of the above, we deduce
    \begin{equation*}
        0 = \int_M \pnorm{D\sigma}^2 = \int_M \pnorm{\conn[] \sigma}^2 + \int_M \frac{\Sc_g}{4} \pnorm{\sigma}^2 + \lim_{R \to \infty} \int_{S(R)} \big< L^\partial \sigma_0, \sigma_0 \big>.
    \end{equation*}
    By a computation of Bartnik,
    \begin{equation*}
        \lim_{R \to \infty} \int_{S(R)} \big< L^\partial \sigma_0, \sigma_0 \big> = -c(n)m(g)
    \end{equation*}
    for some normalization constant \(c(n) > 0\).
    It follows that \(m(g) \geq 0\).

    To prove the second part of the theorem, suppose that \(m(g) = 0\).
    The above procedure allows us to construct a parallel spinor on \(M\) given any spinor that is constant at infinity.
    In particular, we may construct enough linearly independent parallel sections to conclude that \(g\) is flat.
\end{proof}

\bibliographystyle{plain}
\bibliography{ref}

\begin{thebibliography}{10}

\bibitem{PierreJesse16}
Pierre Albin and Jesse Gell-Redman.
\newblock The index of {D}irac operators on incomplete edge spaces.
\newblock {\em SIGMA Symmetry Integrability Geom. Methods Appl.}, 12:Paper No. 089, 45, 2016.

\bibitem{MR4733718}
Christian B\"ar, Simon Brendle, Bernhard Hanke, and Yipeng Wang.
\newblock Scalar curvature rigidity of warped product metrics.
\newblock {\em SIGMA Symmetry Integrability Geom. Methods Appl.}, 20:Paper No. 035, 26, 2024.

\bibitem{Bartnik:1986}
Robert Bartnik.
\newblock The mass of an asymptotically flat manifold.
\newblock {\em Comm. Pure Appl. Math.}, 39(5):661--693, 1986.

\bibitem{PiazzaPauloRosenberg23}
Boris Botvinnik, Paolo Piazza, and Jonathan Rosenberg.
\newblock Positive scalar curvature on simply connected spin pseudomanifolds.
\newblock {\em J. Topol. Anal.}, 15(2):413--443, 2023.

\bibitem{Bourguignon:1992}
Jean-Pierre Bourguignon and Paul Gauduchon.
\newblock Spineurs, op\'erateurs de {D}irac et variations de m\'etriques.
\newblock {\em Comm. Math. Phys.}, 144(3):581--599, 1992.

\bibitem{MR4689374}
Simon Brendle.
\newblock Scalar curvature rigidity of convex polytopes.
\newblock {\em Invent. Math.}, 235(2):669--708, 2024.

\bibitem{Bruning:1988}
Jochen Br\"uning and Robert Seeley.
\newblock An index theorem for first order regular singular operators.
\newblock {\em Amer. J. Math.}, 110(4):659--714, 1988.

\bibitem{cecchini2022lipschitz}
Simone Cecchini, Bernhard Hanke, and Thomas Schick.
\newblock Lipschitz rigidity for scalar curvature.
\newblock arXiv:2206.11796, 2022.

\bibitem{CWXZ}
Simone Cecchini, Jinmin Wang, Zhizhang Xie, and Bo~Zhu.
\newblock Scalar curvature rigidity of the four-dimensional sphere.
\newblock arXiv:2402.12633, 2024.

\bibitem{CecchiniZeidler2024}
Simone Cecchini and Rudolf Zeidler.
\newblock Scalar and mean curvature comparison via the {D}irac operator.
\newblock {\em Geom. Topol.}, 28(3):1167--1212, 2024.

\bibitem{MR730920}
Jeff Cheeger.
\newblock Spectral geometry of singular {R}iemannian spaces.
\newblock {\em J. Differential Geom.}, 18(4):575--657, 1983.

\bibitem{Chou:1985}
Arthur~Weichung Chou.
\newblock The {D}irac operator on spaces with conical singularities and positive scalar curvatures.
\newblock {\em Trans. Amer. Math. Soc.}, 289(1):1--40, 1985.

\bibitem{CLZ24}
Jianchun Chu, Man-Chun Lee, and Jintian ZhuJin.
\newblock Llarull's theorem on punctured sphere with {$L^\infty$} metric.
\newblock arXiv:2405.19724, 2024.

\bibitem{DaiSunWang}
Xianzhe Dai, Yukai Sun, and Changliang Wang.
\newblock Positive mass theorem for asymptotically flat manifolds with isolated conical singularities.
\newblock arXiv:2401.07186v1, 2024.

\bibitem{MR4880457}
Xianzhe Dai, Yukai Sun, and Changliang Wang.
\newblock Positive mass theorem for asymptotically flat spin manifolds with isolated conical singularities.
\newblock {\em Trans. Amer. Math. Soc.}, 378(4):2617--2642, 2025.

\bibitem{Goette:2002}
S.~Goette and U.~Semmelmann.
\newblock Scalar curvature estimates for compact symmetric spaces.
\newblock {\em Differential Geom. Appl.}, 16(1):65--78, 2002.

\bibitem{Gromov:2021}
Misha Gromov.
\newblock Four lectures on scalar curvature.
\newblock In {\em Perspectives in scalar curvature. {V}ol. 1}, pages 1--514. World Sci. Publ., Hackensack, NJ, [2023] \copyright 2023.

\bibitem{HeShiYu25}
Shihang He, Yuguang Shi, and Haobin Yu.
\newblock Positive mass theorems on singular spaces and some applications.
\newblock arXiv:2502.18000, 2025.

\bibitem{Lawson:1989}
H.~Blaine Lawson, Jr. and Marie-Louise Michelsohn.
\newblock {\em Spin geometry}, volume~38 of {\em Princeton Mathematical Series}.
\newblock Princeton University Press, Princeton, NJ, 1989.

\bibitem{Lesch:1993}
Matthias Lesch.
\newblock Deficiency indices for symmetric {D}irac operators on manifolds with conic singularities.
\newblock {\em Topology}, 32(3):611--623, 1993.

\bibitem{MR1449639}
Matthias Lesch.
\newblock {\em Operators of {F}uchs type, conical singularities, and asymptotic methods}, volume 136 of {\em Teubner-Texte zur Mathematik [Teubner Texts in Mathematics]}.
\newblock B. G. Teubner Verlagsgesellschaft mbH, Stuttgart, 1997.

\bibitem{MR3961306}
Chao Li and Christos Mantoulidis.
\newblock Positive scalar curvature with skeleton singularities.
\newblock {\em Math. Ann.}, 374(1-2):99--131, 2019.

\bibitem{Llarull:1998}
Marcelo Llarull.
\newblock Sharp estimates and the {D}irac operator.
\newblock {\em Math. Ann.}, 310(1):55--71, 1998.

\bibitem{Lott:2020}
John Lott.
\newblock Index theory for scalar curvature on manifolds with boundary.
\newblock {\em Proc. Amer. Math. Soc.}, 149(10):4451--4459, 2021.

\bibitem{SchoenYauPMT}
Richard Schoen and Shing~Tung Yau.
\newblock On the proof of the positive mass conjecture in general relativity.
\newblock {\em Comm. Math. Phys.}, 65(1):45--76, 1979.

\bibitem{Streil:2016}
Manuel Streil.
\newblock {\em Yamabe constants of collapsing Riemannian submersions}.
\newblock PhD thesis, Universit\"at Regensburg, 2016.

\bibitem{WZ24}
Jinmin Wang, Zhichao Wang, and Bo~Zhu.
\newblock Scalar-mean rigidity theorem for compact manifolds with boundary.
\newblock arXiv:2409.14503, 2024.

\bibitem{WangXieflat}
Jinmin Wang and Zhizhang Xie.
\newblock On gromov's flat corner domination conjecture and stoker's conjecture.
\newblock arXiv:2203.09511, 2022.

\bibitem{WXwarped}
Jinmin Wang and Zhizhang Xie.
\newblock Dihedral rigidity for submanifolds of warped product manifolds.
\newblock arXiv:2303.13492, 2023.

\bibitem{WX24linf}
Jinmin Wang and Zhizhang Xie.
\newblock Scalar curvature rigidity of spheres with subsets removed and {$L^\infty$} metrics.
\newblock arXiv:2407.21312, 2024.

\bibitem{WangXie25}
Jinmin Wang and Zhizhang Xie.
\newblock Scalar curvature rigidity of degenerate warped product spaces.
\newblock {\em Trans. Amer. Math. Soc. Ser. B}, 12:1--37, 2025.

\bibitem{Wang:2023}
Jinmin Wang, Zhizhang Xie, and Guoliang Yu.
\newblock On {G}romov's dihedral extremality and rigidity conjectures.
\newblock arXiv:2112.01510, 2021.

\bibitem{MR626707}
Edward Witten.
\newblock A new proof of the positive energy theorem.
\newblock {\em Comm. Math. Phys.}, 80(3):381--402, 1981.

\bibitem{Xiequant}
Zhizhang Xie.
\newblock A quantitative relative index theorem and {G}romov's conjectures on positive scalar curvature.
\newblock {\em J. Noncommut. Geom.}, 17(2):609--662, 2023.
\newblock With an appendix by Jinmin Wang and Xie.

\end{thebibliography}
\end{document}